\documentclass[11pt]{article}

\usepackage{fullpage}
\usepackage{amsmath,amssymb,amsthm,mathtools}
\usepackage{enumitem}
\usepackage{microtype}
\usepackage[authoryear,round]{natbib}
\usepackage[
  colorlinks=true,
  linkcolor=blue,
  citecolor=blue,
  urlcolor=blue
]{hyperref}
\usepackage{bm}
\usepackage{bbm}
\usepackage{xcolor}
\usepackage{comment}
\usepackage{tikz}
\usepackage{tkz-graph}

\DeclareFontFamily{U}{mathx}{}
\DeclareFontShape{U}{mathx}{m}{n}{<-> mathx10}{}
\DeclareSymbolFont{mathx}{U}{mathx}{m}{n}
\DeclareMathAccent{\widehat}{0}{mathx}{"70}
\DeclareMathAccent{\widecheck}{0}{mathx}{"71}

\allowdisplaybreaks
\numberwithin{equation}{section}

\newtheorem{theorem}{Theorem}[section]
\newtheorem{proposition}[theorem]{Proposition}
\newtheorem{lemma}[theorem]{Lemma}
\newtheorem{corollary}[theorem]{Corollary}
\theoremstyle{remark}

\newcommand{\R}{\mathbb R}
\newcommand{\E}{\mathbb E}
\newcommand{\Pp}{\mathbb P}
\newcommand{\1}{\mathbf 1}
\newcommand{\op}{\mathrm{op}}
\newcommand{\BL}{\mathrm{BL}}

\newcommand{\dd}{\,\mathrm d}
\newcommand{\e}{\mathrm e}

\newcommand{\cE}{\mathcal{E}}
\newcommand{\cF}{\mathcal{F}}
\newcommand{\cG}{\mathcal{G}}

\newcommand{\cL}{\mathcal{L}}

\newcommand{\cN}{\mathcal{N}}

\newcommand{\EE}{\mathbb{E}}

\newcommand{\SSS}{\mathbb{S}}

\makeatletter
\DeclareRobustCommand\Equiv{\mathrel{
\mathchoice
{\Equiv@\textfont\displaystyle{.45}}
{\Equiv@\textfont\textstyle{.45}}
{\Equiv@\scriptfont\scriptstyle{.5}}
{\Equiv@\scriptscriptfont\scriptscriptstyle{.55}}
}}
\newcommand{\Equiv@}[3]{
\rlap{\raisebox{#3\fontdimen5#12}{$\m@th#2 = $}}
\raisebox{-#3\fontdimen5#12}{$\m@th#2 = $}
}
\makeatother

\newcommand{\TV}{\operatorname{\mathsf{TV}}}

\newcommand{\Unif}{\operatorname{\mathsf{Unif}}}

\newcommand{\tr}{\operatorname{\mathsf{Tr}}}

\newcommand{\ud}{\mathrm{d}}

\newcommand{\Cov}{\operatorname{\mathsf{Cov}}}

\newcommand{\Var}{\operatorname{\mathsf{Var}}}

\newcommand{\convd}{\overset{\mathrm d}{\longrightarrow}}

\newcommand{\bitm}{\begin{itemize}[leftmargin=*]}
\newcommand{\eitm}{\end{itemize}}

\newcommand{\benm}{\begin{enumerate}[leftmargin=*]}
\newcommand{\eenm}{\end{enumerate}}

\definecolor{forestgreen}{rgb}{0.13, 0.55, 0.13}

\def\abs#1{\left| #1 \right|}
\newcommand{\norm}[1]{\left\lVert#1\right\rVert}

\newcommand{\inparen}[1]{\left(#1\right)}             
\newcommand{\inbraces}[1]{\left\{#1\right\}}           
\newcommand{\insquare}[1]{\left[#1\right]}

\newenvironment{keywords}{\par\noindent\textbf{Keywords:} }{\par}

\title{Optimal detection of geometry in random graphs and kernel matrices at critical thresholds}
\author{Cheng Mao\thanks{School of Mathematics, Georgia Institute
of Technology, cheng.mao@math.gatech.edu} \and
Timothy L.\ H.\ Wee\thanks{School of Mathematics, Georgia Institute
of Technology, timothy.wee@gatech.edu}}
\date{}
\hypersetup{
  pdftitle={Optimal detection of geometry in random graphs and kernel matrices at critical thresholds},
  pdfauthor={Cheng Mao and Timothy L. H. Wee}
}

\begin{document}

\maketitle

\begin{abstract}
Detection of geometry in random graphs and matrices has been extensively studied in recent years. For testing a dense random geometric graph on $n$ vertices with latent points in dimension $d$ against an Erd\H{o}s--R\'enyi graph, the detection threshold $d\asymp n^3$ is well known since Bubeck, Ding, Eldan, and R\'acz (2016). We sharpen this result by establishing the optimal asymptotic testing error in the critical regime $d/n^3\to\lambda\in(0,\infty)$. The log-likelihood ratio is shown to be asymptotically normal and dominated by the signed triangle count. Moreover, we consider testing the rank statistics of inner products between latent points against uniformly random ranks. We show that ranks preserve all asymptotic testing information available in the full matrix models, and a cubic statistic based on rank statistics attains the optimal asymptotic error. This then yields an asymptotically optimal test for random kernel matrices with a possibly unknown, strictly monotone kernel. The proofs combine the classical framework of Davies (1985) and Le Cam and Yang (1988) for asymptotic normality under information loss with conditional limit theorems for the models in consideration.
\end{abstract}

\begin{keywords}
random geometric graphs, inner-product kernel matrices, rank statistics, likelihood ratios, information loss, asymptotic normality
\end{keywords}

\tableofcontents

\section{Introduction}

\subsection{Detection of geometry in random graphs and matrices}
Since the work of \citet{bubeck2016testing}, detection of high-dimensional geometry in random graphs has received considerable 
attention in various settings
\citep{brennan2020phase,liu2022testing,liuracz2023probabilistic,bangachev2024fourier,mao2026random,gobel2026information,du2026resolution,hunter2026distinguishability}.
More specifically, let $x_1,\ldots,x_n$ be independent uniformly random points on the unit sphere in
$\R^d$, with both $n$ and $d$ growing. A random geometric graph connects
vertices $i$ and $j$ when $\langle x_i,x_j\rangle$ exceeds a threshold chosen
to give a fixed edge density $p\in(0,1)$. \citet{bubeck2016testing} showed
that the signed triangle count consistently distinguishes this graph from
an Erd\H{o}s--R\'enyi graph $G(n,p)$ if $d\ll n^3$. If $d\gg n^3$,
the total variation (TV) distance between the two models tends to zero, making
detection impossible.

The same scale $d \asymp n^3$ governs the transition from the Wishart distribution to the Gaussian orthogonal ensemble (GOE) of random matrices \citep{jiangli2015approximation,bubeck2016testing}. 
See also \citet{bubeck2018entropic} and \citet{brennan2021finetti} for extensions to more general settings. 
One explanation of why the detection thresholds for dense random graphs and matrices share the same scale $d \asymp n^3$ is that graphs can be obtained from matrices by thresholding matrix entries, as shown by \citet{bubeck2016testing} on the indistinguishability side.

In addition, a finer result is known for random matrices: \citet{racz2019smooth} determined the exact asymptotic TV
distance between Wishart and GOE in the critical regime $d/n^3\to\lambda\in(0,\infty)$ by analyzing
the explicit likelihood ratio. However, such a sharp result at criticality was not known for detecting geometry in random graphs. 
Recently, \citet{hunter2026distinguishability} obtained an upper bound on the log-likelihood ratio whose leading term is the signed triangle count, but they did not pursue the exact asymptotics in the critical regime considered here.

In this work, we study the transition from the random geometric graph model to the Erd\H{o}s--R\'enyi model in the critical regime $d/n^3\to\lambda$. The log-likelihood ratio between the two graph models is shown to be asymptotically normal, with its
leading random term given by the signed triangle count. This yields the
exact asymptotic TV distance and establishes the asymptotic
optimality of the signed triangle count as a test statistic throughout the critical regime. 
In particular, to determine the transition between the two models, we show that the loss of information in the full graph due to thresholding matrix entries is entirely determined by an entrywise correlation parameter.

\subsection{Kernel matrices and rank statistics}
Moreover, we consider detection of geometry in random inner-product kernel matrices using the rank statistics of their entries. The spectra of such matrices have
been studied extensively \citep{elkaroui2010spectrum,cheng2013innerproduct,fan2019spectral}, and existing work also concerns recovery of latent geometry from kernel matrices \citep{whiteley2021matrix,giraud2023localization}.
Here we consider the more primitive question of detecting the latent geometry: Suppose we observe the off-diagonal entries of $\kappa(Y)$, where $\kappa$ is a strictly 
increasing (or decreasing) kernel applied entrywise to a matrix $Y$. We test whether
$Y$ is from the Wishart distribution or the GOE.\footnote{We
emphasize that $\kappa(Y)$ is observed directly here. This differs from detecting geometry
in a graph whose edges are sampled with probabilities given by a kernel \citep{liuracz2023probabilistic,mao2026random}.}
This testing problem would be the same as Wishart versus GOE if the kernel $\kappa$ were known because the strictly monotone kernel could be inverted. Without assuming the knowledge of $\kappa$, we provide a computationally efficient test that attains the optimal asymptotic
error in the critical regime $d/n^3\to\lambda$.

The test relies on a simple observation: a strictly increasing (resp.~decreasing) kernel $\kappa$ 
preserves (resp.~reverses) the global ordering (i.e., ranks) of the entries of $Y$. Although rank statistics discard cardinal values, 
they can 
preserve the information needed for asymptotically optimal inference in suitable models 
\citep{hajek1999theory,hallin1988optimal,bickel1997local,hoff2014information}.
Our result establishes this phenomenon for detection of geometry in random matrices. In particular, the asymptotically optimal statistic is a cubic polynomial of Gaussian quantiles defined using the ranks, analogous to the signed triangle count in the graph case. 

We note a few more related lines of research. First, the idea of using ranks of inner products or distances has a long history rooted in multidimensional scaling and ordinal embedding \citep{shepard1962analysis,kruskal1964multidimensional,kleindessner2014uniqueness}. Particularly, a recent independent work of \citet{coda2026theoretical} studies spherical ordinal embedding using rank statistics, which concerns recovery of geometry rather than detection. Moreover, a closely related work of \citet{liao2026testing} (see also an earlier work of \citet{ariascastro2017distribution}) studies detection of latent structure using rank statistics, but their alternative model is different from ours and concerns community detection. Finally, statistics of the global ranks of matrix entries have also been studied in biological settings for neural correlations \citep{giusti2015clique} and in other real-world scenarios \citep{caputi2026integral}, showing the wide use of rank statistics beyond theory.

\subsection{Asymptotic normality under information loss}
Our proofs for random graphs and rank statistics are unified under the classical framework of \citet{davies1985asymptotic} and \citet{lecam1988preservation}
for asymptotic normality under information loss. 
For an overview of the proof, let us consider random graphs obtained from thresholding Wishart and GOE matrices. 
The starting point of the analysis is a cubic expansion of the log-likelihood ratio between the parent models, Wishart and GOE. 
Then we split its leading term into an observable component (a multiple of the signed triangle count) and a
hidden residual, and obtain the likelihood ratio between random graph models by conditioning the parent likelihood ratio. 
The result of \citet{davies1985asymptotic} then 
shows that, if the observable component is asymptotically normal and the
residual has a conditional normal limit, then the graph log-likelihood
ratio is also asymptotically normal, with information given by the
limiting variance of the observable component
(Theorem~\ref{thm:coarsening}). 
Our main technical work lies in proving the needed conditional limit theorems for the models in consideration.
For the graph models, the information loss is characterized precisely by a correlation parameter, while for rank statistics, the residual vanishes, showing that the full information is preserved.

\paragraph{Notation.}
Write \([n]=\{1,\ldots,n\}\) and let \(\binom{[n]}2\) denote the set
of unordered pairs of distinct elements of \([n]\).
Let \(\Phi\) and \(\phi\) denote the standard Gaussian cumulative distribution function and probability density function respectively.
In the notation \(\cN(\mu,v)\) for Gaussian with mean \(\mu\) and variance
\(v \ge 0\), we allow \(v=0\) to mean the point mass at \(\mu\).
The notation \(\convd\) denotes convergence in distribution.
For a sequence of random variables \((X_n)_{n=1}^\infty\), \(X_n=o_{Q_n}(1)\) means convergence to zero
in \(Q_n\)-probability, while \(X_n=O_{Q_n}(1)\) means boundedness in
\(Q_n\)-probability.
Write \(\TV(P,Q)\) for the total variation distance between probability
distributions $P$ and $Q$.

\section{Main results}\label{sec:main-results}

Throughout, let $n$ denote the number of vertices in a graph or the size of a matrix. All the limits are taken as $n \to \infty$ unless otherwise specified. Let $d = d_n$ be a dimension parameter that can grow with $n$. Let \(x_1,\ldots,x_n\) be i.i.d.\ uniform points on the unit sphere \(\SSS^{d-1}\subset\R^d\). 

Before stating our main results, we set the stage by showing a spherical version of the result of \cite{racz2019smooth} on the comparison between Wishart and GOE. Suppose that, under \(P_{n,d}\), we observe the
symmetric matrix \(Y\) with zero diagonal and entries
\begin{equation*}
Y_{ij}=\sqrt d\langle x_i,x_j\rangle,\qquad 1\le i<j\le n,
\end{equation*}
where $x_1, \dots, x_n$ are the \emph{latent} points defined above. 
Under \(Q_n\), the matrix \(Y\) has 
zero diagonal and entries \((Y_{ij})_{i<j}\) that are independent standard
Gaussians. The asymptotic results concern the critical regime 
\begin{equation}\label{eq:RGG_critical_scaling}
\frac d{n^3}\longrightarrow\lambda\in(0,\infty).
\end{equation}
The above setting is slightly different from \citet{racz2019smooth} in that the points $x_1, \dots, x_n$ are spherical rather than Gaussian and the diagonal of the observed matrix is zero. Nevertheless, these differences do not change the asymptotic normality as shown by the following result. 

\begin{proposition}[Adaptation of \cite{racz2019smooth}]
\label{prop:parent-asymptotics}
Suppose \eqref{eq:RGG_critical_scaling} holds. Then
\begin{equation}\label{eq:parent-log-limits}
\log\frac{\dd P_{n,d}}{\dd Q_n}(Y)
\convd
\cN\!\inparen{\pm\frac{1}{12\lambda},\,\frac{1}{6\lambda}},
\end{equation}
where ``$+$'' is for \(Y\sim P_{n,d}\) and ``$-$'' is for
\(Y\sim Q_n\). Consequently,
\begin{equation}\label{eq:parent-tv}
\TV(P_{n,d},Q_n)
\longrightarrow
2\Phi\!\left(\frac{1}{2\sqrt{6\lambda}}\right)-1.
\end{equation}
\end{proposition}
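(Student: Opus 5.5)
The plan is to write down the density of $Y$ under $P_{n,d}$ explicitly and expand its logarithm to third order, as \citet{racz2019smooth} do for Wishart versus GOE.

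\textbf{Step 1: the density.} Let $M=I+Y/\sqrt d$ be the Gram matrix of $x_1,\dots,x_n$, which has unit diagonal. For $n<d$, the joint law of the off-diagonal entries of $M$ has density proportional to $\det(M)^{(d-n-1)/2}$ on the set of positive definite correlation matrices. This is the classical density of Gram matrices of i.i.d.\ uniform spherical vectors; it follows from Bartlett/Gram--Schmidt, or from the Wishart density after normalizing out the diagonal. Hence
\[
\log\frac{\dd P_{n,d}}{\dd Q_n}(Y)=c_{n,d}+\frac{d-n-1}{2}\log\det\!\inparen{I+\frac{Y}{\sqrt d}}+\frac12\sum_{i<j}Y_{ij}^2-\binom n2\frac{\log d}{2}.
\]
Here $c_{n,d}$ is an explicit ratio of Gamma functions. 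I would compute it by Stirling's formula, or avoid computing it by fixing it at the end through the constraint $\E_{Q_n}[\dd P_{n,d}/\dd Q_n]=1$.

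\textbf{Step 2: cubic expansion.} On the event $\|Y\|_{\op}\le C\sqrt n$, which has probability $1-o(1)$ under both measures, use $\log\det(I+A)=\sum_k(-1)^{k+1}\tr(A^k)/k$ with $A=Y/\sqrt d$ and $\tr Y=0$. This gives
\[
\frac{d-n-1}{2}\log\det(I+A)=-\frac{d-n-1}{4d}\tr(Y^2)+\frac{d-n-1}{6d^{3/2}}\tr(Y^3)-\frac{d-n-1}{8d^2}\tr(Y^4)+\cdots.
\]
Since $\tr(Y^2)=2\sum_{i<j}Y_{ij}^2$, the quadratic term cancels $\frac12\sum Y_{ij}^2$ up to the correction $\frac{n+1}{2d}\sum Y_{ij}^2$. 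The cubic term is $\tr(Y^3)/(6\sqrt d)=\sum_{i<j<k}Y_{ij}Y_{jk}Y_{ki}/\sqrt d$. Under $Q_n$ this is a degenerate $U$-statistic with variance $\binom n3/d\to 1/(6\lambda)$, and it is asymptotically normal by the fourth-moment theorem on the Gaussian chaos, or by a martingale CLT.

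The remaining terms are controlled as follows.
\begin{itemize}
\item The terms $\frac{n}{d}\sum Y_{ij}^2$ and $\tr(Y^4)/d$ have fluctuations of order $n/d\cdot n=o(1)$ and $n^{2}/d\cdot O(n^{1/2})$ respectively; only their means contribute.
\item The higher traces satisfy $|\tr(Y^k)|/d^{k/2-1}\le n(C\sqrt n)^k/d^{k/2-1}\to 0$ for $k\ge 5$.
\item For $k=4$, $\tr(Y^4)/d$ has mean about $2n^3/d$ and fluctuation $o(1)$, so it is a deterministic constant plus $o_{Q}(1)$.
\end{itemize}
Collecting everything, $\log(\dd P/\dd Q)=T_n+a_n+o_{Q_n}(1)$, where $T_n=d^{-1/2}\sum_{i<j<k}Y_{ij}Y_{jk}Y_{ki}$ and $a_n$ is deterministic. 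Deriving this representation, with all the deterministic pieces in one constant, is the main obstacle. Explicitly evaluating $a_n$ would require delicate Stirling bookkeeping, so I would circumvent it rather than compute it directly.

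\textbf{Step 3: constants and the $P$-limit via Le Cam.} Since $T_n\convd\cN(0,1/(6\lambda))$ under $Q_n$, the log-likelihood ratio is tight with Gaussian limit $\cN(a,\sigma^2)$ along subsequences where $a_n\to a\in[-\infty,\infty)$. Contiguity then forces $a=-\sigma^2/2$; this needs uniform integrability, or a direct check that $\E_{Q_n}\exp(T_n+a_n)\to1$. I would obtain contiguity by bounding $\E_Q[(\dd P/\dd Q)^2]$, or via Le Cam's first lemma after showing that $a_n$ stays bounded through a crude Stirling estimate. This yields $\cN(-\frac1{12\lambda},\frac1{6\lambda})$ under $Q_n$. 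Le Cam's third lemma then gives $\cN(+\frac1{12\lambda},\frac1{6\lambda})$ under $P_{n,d}$.

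\textbf{Step 4: the TV limit.} For a log-likelihood ratio $L$, $\TV=Q(L>0)-P(L>0)$. Passing to the limit, with $\sigma^2=1/(6\lambda)$,
\[
\TV(P_{n,d},Q_n)\to\Phi(\sigma/2)-\Phi(-\sigma/2)=2\Phi\!\left(\frac{1}{2\sqrt{6\lambda}}\right)-1.
\]
This uses the continuity of the limiting distribution at $0$ and the fact that TV is the optimal test error. A simpler alternative for Steps 1--2 is to cite \citet{racz2019smooth} for the Wishart/GOE statement and transfer it to the spherical, zero-diagonal model. However, the direct density computation above seems cleaner.
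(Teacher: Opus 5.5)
Your Steps 1, 2 and 4 follow the paper's argument: exact density, expansion of the log-determinant on $\{\|Y\|_{\op}\le C\sqrt n\}$, CLT for the triangle statistic, Le Cam's third lemma and Neyman--Pearson. (The TV identity should read $\TV=P_{n,d}(\log L_n>0)-Q_n(\log L_n>0)$; your final limit is nevertheless correct.) The gap is in Step 3. There you avoid computing the deterministic constant $a_n$ and try to extract $a_n\to-\frac1{12\lambda}$ from $\E_{Q_n}L_n=1$, where $L_n=\dd P_{n,d}/\dd Q_n$. By itself, via Markov and Fatou, this identity only gives $\limsup a_n\le-\frac1{12\lambda}$. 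Equality requires uniform integrability of $L_n$ under $Q_n$, i.e.\ $P_{n,d}\triangleleft Q_n$, and none of your three suggestions provides it.

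(a) The second moment is exponentially large. Let $\mu_1,\dots,\mu_n$ be the eigenvalues of $R=I_n+d^{-1/2}Y$, so that $\sum_i(\mu_i-1)=0$. The exact formula then gives
\[
L_n^2\,\frac{\dd Q_n}{\dd Y}=(2\pi)^{-\binom n2/2}\exp\Bigl(2K_{n,d}+d\sum_{i=1}^n\gamma(\mu_i)-(n+1)\sum_{i=1}^n\log\mu_i\Bigr),\qquad \gamma(\mu)=\log\mu-(\mu-1)+\tfrac14(\mu-1)^2,
\]
and $\gamma(3)=\log 3-1>0$. Take the equicorrelation matrix with off-diagonal entries $2/(n-1)$, whose eigenvalues are $3$ and $1-\frac2{n-1}$. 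There the exponent is $d(\log3-1)-O(n^2)$. Integrating over a Frobenius ball of small fixed radius around it gives $\E_{Q_n}L_n^2\ge e^{cd}$ for some $c>0$, since $d\asymp n^3\gg n^2$.

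(b) $\E_{Q_n}e^{T_n}=\infty$ for every $n$, because $T_n$ is a nonzero cubic polynomial in Gaussians (let $Y_{12}=Y_{13}=Y_{23}=s\to\infty$). So the ``direct check'' cannot be carried out without truncation, and truncation brings you back to the UI problem.

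(c) Boundedness of $a_n$ combined with Le Cam's first lemma yields only $Q_n\triangleleft P_{n,d}$, since the $Q_n$-limit $e^{\cN(a,\sigma^2)}$ is a.s.\ positive. The criterion for $P_{n,d}\triangleleft Q_n$ is that this limit have mean one, i.e.\ $a=-\sigma^2/2$, which is exactly what you set out to prove.

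The paper closes this step by computing the constant, which takes a few lines rather than delicate bookkeeping. With $x=d/2$,
\[
K_{n,d}=\sum_{r=1}^{n-1}\bigl[\log\Gamma(x)-\log\Gamma(x-\tfrac r2)\bigr]-\tfrac12\binom n2\log x .
\]
A second-order Taylor expansion using $\psi(x)=\log x-\frac1{2x}+O(x^{-2})$ and $\psi'(x)=\frac1x+O(x^{-2})$ gives $K_{n,d}=-\frac{n(n-1)(2n+5)}{24d}+O(n^4/d^2)\to-\frac1{12\lambda}$. The $Q_n$-means of the quadratic and quartic trace terms, $\frac{n(n-1)(n+1)}{4d}$ and $-\frac{n(n-1)(n-2)}{4d}$, cancel up to $\frac{3n(n-1)}{4d}=o(1)$.

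If you want to keep your circumvention, you need a genuine contiguity argument. One option is to run your expansion under $P_{n,d}$ to show $\log L_n-a_n=O_{P_{n,d}}(1)$, which requires moment bounds for $T_n$ and the trace fluctuations under the spherical model. Together with $\limsup a_n<\infty$, this makes $L_n$ tight under $P_{n,d}$, which is equivalent to uniform integrability under $Q_n$. Another option is to bound the truncated moment $\E_{Q_n}[L_n^2\1\{\|Y\|_{\op}\le C\sqrt n\}]$, using that $T_n$ is $O(1)$-Lipschitz on that convex set, and then use $P_{n,d}(\|Y\|_{\op}>C\sqrt n)\to0$.
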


\noindent
We give a self-contained proof of Proposition~\ref{prop:parent-asymptotics} in Appendix~\ref{sec:continuous}.

\subsection{Random geometric graphs}
\label{sec:rgg}\label{sec:setup}

We now consider detection of geometry in random graphs at critical thresholds. 
Fix an edge density \(p\in(0,1)\). 
Let $A$ be the adjacency matrix of a simple graph we observe. We say that $A$ is a \emph{random geometric graph} and write $A \sim P_{n,d}^{\mathsf{G}}$ (``$\mathsf{G}$'' for graph) if for $1 \le i < j \le n$, 
\begin{equation}
A_{ij}=\1\{Y_{ij} \ge \tau_{d,p}\}, \label{eq:threshold-tau-p}
\end{equation}
where $Y \sim P_{n,d}$ and
\(\tau_{d,p}\) is chosen so that each edge is present with
probability \(p\). Moreover, let \(Q_n^{\mathsf{G}}\) be the distribution of 
the Erd\H{o}s--R\'enyi graph \(G(n,p)\).  Thus $P_{n,d}^{\mathsf{G}}$ and $Q_n^{\mathsf{G}}$ have the same
edge density, while only \(P_{n,d}^{\mathsf{G}}\) has dependence between edges induced by the geometry of the
latent points.

To facilitate stating the results, we define 
\begin{equation*}
z_p=\Phi^{-1}(1-p),\qquad\text{and}\qquad
\rho_p=\frac{\phi(z_p)}{\sqrt{p(1-p)}}.
\end{equation*}
Note that $z_p$ is the $(1-p)$th quantile of $\cN(0,1)$. Therefore, if $A \sim Q_n^{\mathsf{G}}$ and \(Y_{ij}\stackrel{\text{i.i.d.}}{\sim}\cN(0,1)\) for $1 \le i < j \le n$, we can couple them so that 
\begin{equation}
A_{ij} = \1\{Y_{ij} \ge z_p\}. \label{eq:threshold-z-p}
\end{equation}
Moreover, if 
\begin{equation}\label{eq:standardized-edge}
\bar A_{ij} = \frac{A_{ij}-p}{\sqrt{p(1-p)}}
\end{equation}
denotes the standardized edge, then 
\begin{equation}
\rho_p = \E\left[ Y_{ij} \cdot \bar A_{ij} \right] 
= \operatorname{Corr}(Y_{ij}, A_{ij}) \label{eq:rho-p-correlation}
\end{equation}
is the correlation between the latent Gaussian \(Y_{ij}\) and the edge $A_{ij}$.

Therefore, the distributions $P_{n,d}^{\mathsf{G}}$ and $Q_n^{\mathsf{G}}$ are obtained respectively from $P_{n,d}$ and $Q_n$ via thresholding. This operation was used in the original work by \citet{bubeck2016testing} to obtain the indistinguishability of a random geometric graph from an Erd\H{o}s--R\'enyi graph above the threshold $d \asymp n^3$. 
We now show that the thresholding operation not only preserves the order $d \asymp n^3$ but also a stronger statement in the critical regime
\eqref{eq:RGG_critical_scaling}---the asymptotic normality of the log-likelihood
ratio between $P_{n,d}^{\mathsf{G}}$ and $Q_n^{\mathsf{G}}$ analogous to Proposition~\ref{prop:parent-asymptotics}.

\begin{theorem}\label{thm:RGG_ER_LLR_asymptotics}
Suppose \eqref{eq:RGG_critical_scaling} holds and $p$ is a constant.
Then  
\begin{equation}\label{eq:RGG_ER_LLR_asymptotics}
\log\frac{\dd P_{n,d}^{\mathsf{G}}}{\dd Q_n^{\mathsf{G}}}(A)
\convd
\cN\!\inparen{ \pm \frac{\rho_p^6}{12\lambda}, \, \frac{\rho_p^6}{6\lambda}  },
\end{equation}
where ``$+$'' is for $A \sim P_{n,d}^{\mathsf{G}}$ and ``$-$'' is for $A \sim Q_n^{\mathsf{G}}$. Consequently, 
\begin{equation}\label{eq:tv-main}
\TV(P_{n,d}^{\mathsf{G}},Q_n^{\mathsf{G}})
\longrightarrow
2\Phi\!\left(\frac{\rho_p^3}{2\sqrt{6\lambda}}\right)-1.
\end{equation}
\end{theorem}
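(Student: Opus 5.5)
The plan follows the Davies / Le Cam--Yang route outlined in the introduction. The graph likelihood ratio is the conditional expectation of the parent likelihood ratio:
\[
\frac{\dd P_{n,d}^{\mathsf G}}{\dd Q_n^{\mathsf G}}(A)=\E_{Q_n}\!\left[\frac{\dd P_{n,d}}{\dd Q_n}(Y)\,\middle|\,A\right],
\]
where under $Q_n$ we couple $A$ and $Y$ as in \eqref{eq:threshold-z-p}. One technical point is that $\tau_{d,p}$ differs from $z_p$ by $O(1/\sqrt d)$, which is negligible for $d\asymp n^3$. I would handle this by thresholding at $\tau_{d,p}$ under both models after a tiny recentring, or by absorbing it into the error terms.

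\textbf{Step 1: expansion of the parent log-likelihood ratio.} From the proof of Proposition~\ref{prop:parent-asymptotics} (Appendix), I expect
\[
\log\frac{\dd P_{n,d}}{\dd Q_n}(Y)=\frac{1}{\sqrt d}\sum_{i<j<k}Y_{ij}Y_{jk}Y_{ik}-\frac{1}{12\lambda}+o_{Q_n}(1).
\]
The leading term $T=d^{-1/2}\sum Y_{ij}Y_{jk}Y_{ik}$ has variance $\binom n3/d\to 1/(6\lambda)$.

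\textbf{Step 2: observable plus hidden decomposition.} Write $Y_{ij}=\rho_p\bar A_{ij}+W_{ij}$. Under $Q_n$, $W_{ij}$ is uncorrelated with $\bar A_{ij}$, has $\E[W_{ij}\mid A_{ij}]=0$, and $\E[W_{ij}^2]=1-\rho_p^2$. The regression is exactly linear because $A_{ij}$ is binary, so $\E[Y_{ij}\mid A_{ij}]=\rho_p\bar A_{ij}$. Expanding the triangle product gives
\[
T=\rho_p^3\,S+R,\qquad S=\frac{1}{\sqrt d}\sum_{i<j<k}\bar A_{ij}\bar A_{jk}\bar A_{ik},
\]
where $S$ is the normalized signed triangle count. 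Every monomial in $R$ contains at least one $W$ factor. Since the $W$'s are conditionally independent given $A$ with conditional mean zero, $\E[R\mid A]=0$.

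Under $Q_n^{\mathsf G}$, $S\convd\cN(0,1/(6\lambda))$ by a martingale or fourth-moment CLT for degenerate $U$-statistics of i.i.d.\ entries. Hence the observable part $\rho_p^3S$ has limiting variance $\rho_p^6/(6\lambda)$.

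\textbf{Step 3: conditional normality of the residual.} This is the main obstacle. I would show that, conditionally on $A$, the residual satisfies $R\convd\cN(0,\sigma^2)$ in $Q_n$-probability, with $\sigma^2=(1-\rho_p^6)/(6\lambda)$. For the conditional variance, the terms of $R$ are products over triangles of entries $\rho_p\bar A_e$ or $W_e$, with at least one $W$. Distinct triangles are conditionally uncorrelated, since they differ in some edge $e$ and the factor for $e$ has conditional mean zero. So the conditional variance is
\[
\frac1d\sum_{\triangle}\Bigl(\prod_{e\in\triangle}\E[Y_e^2\mid A_e]-\rho_p^6\prod_{e\in\triangle}\bar A_e^2\Bigr),
\]
which is an average of bounded i.i.d.-type quantities. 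By a law of large numbers this concentrates at $\binom n3(1-\rho_p^6)/d$.

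Conditional asymptotic normality would then follow from a CLT for degenerate polynomial chaos in the conditionally independent, non-identically distributed $W_e$, which are bounded in moments uniformly in $A$. I would use the fourth-moment or martingale CLT conditionally, which requires controlling the conditional fourth cumulant by counting pairs of triangles sharing edges. A de Jong-type bound should give $O(n^5/d^2)\to0$ on a high-probability set of $A$.

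\textbf{Step 4: applying Theorem~\ref{thm:coarsening}.} With Steps 1--3, the Davies coarsening theorem gives
\[
\log\frac{\dd P^{\mathsf G}}{\dd Q^{\mathsf G}}=\rho_p^3S-\frac{\rho_p^6}{12\lambda}+o_{Q}(1).
\]
Heuristically, $\E[e^{T-1/(12\lambda)}\mid A]\approx e^{\rho_p^3S+\sigma^2/2-1/(12\lambda)}$, and $\sigma^2/2-1/(12\lambda)=-\rho_p^6/(12\lambda)$. The theorem also requires uniform integrability of the parent ratio, which holds since its limit has mean $1$ by \eqref{eq:parent-log-limits}.

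This yields the ``$-$'' limit in \eqref{eq:RGG_ER_LLR_asymptotics} under $Q_n^{\mathsf G}$. Le Cam's third lemma then gives the ``$+$'' version under $P_{n,d}^{\mathsf G}$. Finally, the TV limit \eqref{eq:tv-main} follows from the standard formula for the total variation distance between the two limiting normal experiments, $2\Phi(\sqrt{v}/2)-1$ with $v=\rho_p^6/(6\lambda)$.
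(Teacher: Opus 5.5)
Your architecture matches the paper's: the pushforward identity, the parent cubic expansion, the split $M_n=\rho_p^3 S$ plus residual, Theorem~\ref{thm:coarsening} with $I=1/(6\lambda)$ and $J=\rho_p^6/(6\lambda)$, then Le Cam's third lemma and Neyman--Pearson. Two steps, however, do not go through as written.

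\textbf{The threshold discrepancy.} The identity $\dd P^{\mathsf G}_{n,d}/\dd Q^{\mathsf G}_n=\E_{Q_n}[\dd P_{n,d}/\dd Q_n\mid A]$ holds only for the graph $\widetilde P^{\mathsf G}_{n,d}$, obtained by thresholding $Y\sim P_{n,d}$ at $z_p$, because the same map must be used under both measures. If you only know $|\tau_{d,p}-z_p|=O(d^{-1/2})$, the natural coupling bounds $\TV(\widetilde P^{\mathsf G}_{n,d},P^{\mathsf G}_{n,d})$ by about $n^2/\sqrt d\asymp\sqrt n$, which is useless. Thresholding at $\tau_{d,p}$ under $Q_n$ does not help either, since it changes the null to $G(n,1-\Phi(\tau_{d,p}))$ rather than $G(n,p)$. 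What you need is the sharper per-edge bound $|H_d(z_p)-\Phi(z_p)|=O(1/d)$, where $H_d$ is the CDF of $\sqrt d\langle x_1,x_2\rangle$. It follows from $h_d=\phi\,(1+O(1/d))$ uniformly on compacts together with the symmetry $H_d(0)=\Phi(0)$. This gives a TV bound of $O(n^2/d)=O(1/n)$. The expansion then transfers from $\widetilde L_n$ to $L_n$ because $\E_{Q_n^{\mathsf G}}|L_n-\widetilde L_n|\to0$ and $1/\widetilde L_n=O_{Q_n^{\mathsf G}}(1)$.

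\textbf{Step 3 rests on a false claim.} Distinct triangles are \emph{not} conditionally uncorrelated given $A$. On an unshared edge the factor is $Y_e$, whose conditional mean is $\rho_p\bar A_e\neq0$; only the $W$-parts are conditionally centered. For $\triangle=\{i,j,k\}$ and $\triangle'=\{i,j,k'\}$ sharing $e_0=\{i,j\}$,
\[
\Cov\bigl(Y_{ij}Y_{ik}Y_{jk},\,Y_{ij}Y_{ik'}Y_{jk'}\bigm| A\bigr)=\rho_p^4\,\bar A_{ik}\bar A_{jk}\bar A_{ik'}\bar A_{jk'}\,\Var(Y_{ij}\mid A_{ij})\neq0 .
\]
There are $\asymp n^4\gg d$ such pairs, so you cannot discard them by counting. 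You must show that their sum, indexed by $4$-cycles with a marked diagonal, is centered under $Q_n^{\mathsf G}$ with variance $O(n^4)$. It then contributes $O_{Q_n^{\mathsf G}}(n^2/d)=o_{Q_n^{\mathsf G}}(1)$. So your limiting conditional variance $(1-\rho_p^6)/(6\lambda)$ is correct, but not for the reason you give.

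The same non-vanishing cross terms appear in the conditional fourth moment, so your ``de Jong-type bound $O(n^5/d^2)$'' is unsubstantiated. Every conditional moment is a random polynomial in $A$, and its negligibility must be proven in probability using cancellations.

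The paper sidesteps all of this with Hoeffding's two-replica criterion (Lemma~\ref{lem:two-replica-criterion}). Draw $Y^{(1)},Y^{(2)}$ conditionally i.i.d.\ given $A$. The triples $(\bar A_e,Y^{(1)}_e,Y^{(2)}_e)$ are then i.i.d.\ across edges with pairwise correlations $\rho_p,\rho_p,\rho_p^2$. An \emph{unconditional} moment-method CLT for vector-valued triangle sums, whose limiting covariances are the cubed entrywise covariances (Lemma~\ref{lem:triangle-vector-clt}), gives $(R^{(1)},R^{(2)})\convd\cN(0,1-\rho_p^6)^{\otimes2}$. That is exactly the quenched convergence required by Theorem~\ref{thm:coarsening}.
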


The proof of Theorem~\ref{thm:RGG_ER_LLR_asymptotics} is deferred to Section~\ref{sec:assembly}. Since $P_{n,d}^{\mathsf{G}}$ and $Q_n^{\mathsf{G}}$ are obtained respectively from $P_{n,d}$ and $Q_n$ via thresholding, by the data processing inequality, it is not surprising that the asymptotic variance of the log-likelihood ratio and the limiting total variation distance are both smaller than their counterparts in Proposition~\ref{prop:parent-asymptotics}. More interestingly, these \emph{global} quantities differ by factors that depend on the \emph{entrywise} correlation $\rho_p \in (0,1)$. This phenomenon is explained by the classical theory of asymptotic normality under information loss \citep{davies1985asymptotic,lecam1988preservation}, which is the backbone of our proof in Section~\ref{sec:proofs-main}. 

Moreover, Theorem~\ref{thm:RGG_ER_LLR_asymptotics} characterizes the optimal asymptotic error for testing between $P_{n,d}^{\mathsf{G}}$ and $Q_n^{\mathsf{G}}$ because, by the Neyman--Pearson lemma,
\begin{equation}
\inf_{\psi} \left( P_{n,d}^{\mathsf{G}}\!\insquare{ \psi(A) = 0  } + Q_n^{\mathsf{G}}\!\insquare{ \psi(A) = 1  } \right) = 1 - \TV(P_{n,d}^{\mathsf{G}},Q_n^{\mathsf{G}}), \label{eq:relation-TV-testing}
\end{equation}
where the infimum is over all tests $\psi$ that returns $1$ (resp.~$0$) if the test result is $A \sim P_{n,d}^{\mathsf{G}}$ (resp.~$A \sim Q_n^{\mathsf{G}}$). 
\cite{bubeck2016testing} proposed the \emph{signed triangle count} as a computationally efficient statistic to achieve consistent testing for $d \ll n^3$. 
We now show that it in fact achieves the optimal asymptotic error throughout the critical regime \eqref{eq:RGG_critical_scaling}. 

Using the standardized edges
\(\bar A_{ij}\) as in \eqref{eq:standardized-edge}, define the (normalized) signed triangle count
\begin{equation}\label{eq:signedtriangle_def}
S_n^{\mathsf{G}}(A) := \frac{1}{\sqrt{\binom n3}} \sum_{1 \le i<j<k \leq n} \bar A_{ij}\bar A_{ik}\bar A_{jk}.
\end{equation}
Define the test $\varphi_n : \inbraces{0,1}^{\binom{[n]}{2}} \to \inbraces{0,1}$ by
\begin{equation}
\varphi_n(A) = \boldsymbol{1}\!\inbraces{ S_n^{\mathsf{G}}(A) > \frac{\rho_p^3}{2\sqrt{6 \lambda}} }.
\label{eq:RGG_ER_signedTriangle_test}
\end{equation}
This threshold test achieves the following asymptotic error, proved in Section~\ref{sec:assembly}.

\begin{theorem}\label{thm:RGG_ER_optimal_test}
Suppose \eqref{eq:RGG_critical_scaling} holds and $p$ is a constant. The test \eqref{eq:RGG_ER_signedTriangle_test} satisfies
\begin{equation*}
P_{n,d}^{\mathsf{G}}\!\insquare{ \varphi_n(A) = 0  } + Q_n^{\mathsf{G}}\!\insquare{ \varphi_n(A) = 1  } \longrightarrow 2 \Phi\!\inparen{  -\frac{\rho_p^3}{2\sqrt{6\lambda} }  }.
\end{equation*}
\end{theorem}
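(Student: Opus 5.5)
The plan is to show that the signed triangle count is asymptotically Gaussian under both hypotheses, with the same variance and with means separated by exactly the amount that makes the threshold in \eqref{eq:RGG_ER_signedTriangle_test} optimal. Concretely, the goal is
\begin{equation*}
S_n^{\mathsf{G}}(A)\convd\cN(0,1)\ \text{under } Q_n^{\mathsf{G}},\qquad S_n^{\mathsf{G}}(A)\convd\cN\!\inparen{\frac{\rho_p^3}{\sqrt{6\lambda}},1}\ \text{under } P_{n,d}^{\mathsf{G}}.
\end{equation*}
Granting these two limits, the error sum converges to
\begin{equation*}
\Pp[\cN(0,1)>c]+\Pp[\cN(2c,1)\le c]=2\Phi(-c),\qquad c=\frac{\rho_p^3}{2\sqrt{6\lambda}}.
\end{equation*}
Since the limits are continuous, the threshold being fixed causes no issue.

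\textbf{Null.} Under $Q_n^{\mathsf{G}}$ the $\bar A_{ij}$ are i.i.d., centered, with unit variance and bounded. Hence $S_n^{\mathsf{G}}$ is a degenerate (homogeneous chaos) U-statistic of order three with variance exactly $1$. Asymptotic normality then follows from a fourth-moment or de Jong-type CLT for degenerate U-statistics: the fourth cumulant vanishes because the contractions of the triangle kernel are negligible. Alternatively, one can use the method of moments, counting triangle configurations.

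\textbf{Alternative.} I would use Le Cam's third lemma rather than a direct CLT under $P_{n,d}^{\mathsf{G}}$. The key input is the joint convergence of $(S_n^{\mathsf{G}},\log\frac{\dd P_{n,d}^{\mathsf{G}}}{\dd Q_n^{\mathsf{G}}})$ under $Q_n^{\mathsf{G}}$ to a bivariate Gaussian. The proof of Theorem~\ref{thm:RGG_ER_LLR_asymptotics} (via the Davies/Le Cam–Yang framework) yields an expansion of the graph log-likelihood ratio under $Q_n^{\mathsf{G}}$ of the form
\begin{equation*}
\log\frac{\dd P_{n,d}^{\mathsf{G}}}{\dd Q_n^{\mathsf{G}}}(A)=\frac{\rho_p^3}{\sqrt{6\lambda}}\,S_n^{\mathsf{G}}(A)-\frac{\rho_p^6}{12\lambda}+o_{Q_n^{\mathsf{G}}}(1).
\end{equation*}
The source of this expansion is as follows. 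The parent LLR is dominated by $\frac{1}{\sqrt{6\lambda}}$ times the normalized Gaussian triangle sum $\binom n3^{-1/2}\sum Y_{ij}Y_{ik}Y_{jk}$. Its conditional expectation given $A$ is $\rho_p^3 S_n^{\mathsf{G}}$, because $\E[Y_{ij}\mid A_{ij}]=\rho_p\bar A_{ij}$ up to a function of $A_{ij}$ that is orthogonal in the relevant sense.

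If the expansion holds, the pair $(S_n^{\mathsf{G}},\log\frac{\dd P_{n,d}^{\mathsf{G}}}{\dd Q_n^{\mathsf{G}}})$ is asymptotically a linear image of one Gaussian. Its limiting covariance is therefore $\rho_p^3/\sqrt{6\lambda}$, and Le Cam's third lemma shifts the mean of $S_n^{\mathsf{G}}$ under $P_{n,d}^{\mathsf{G}}$ by exactly this amount while keeping the variance at $1$. This gives the claimed limit.

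\textbf{Main obstacle.} The hard step is justifying that the graph log-likelihood ratio is asymptotically linear in $S_n^{\mathsf{G}}$, rather than merely asymptotically normal. What must be controlled is the hidden residual, namely the part of the parent triangle statistic orthogonal to $\rho_p^3S_n^{\mathsf{G}}$. Its conditional Gaussian limit must not correlate with $S_n^{\mathsf{G}}$, so that it contributes only a deterministic term after conditioning. I expect this is exactly what the Davies-type argument (Theorem~\ref{thm:coarsening}) provides, with the observable component identified as $\rho_p^3 S_n^{\mathsf{G}}/\sqrt{6\lambda}$.

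If that identification is not available directly, the fallback is to compute moments. One would compute $\E_{P}[S_n^{\mathsf{G}}]$ from the three-point correlation of thresholded spherical inner products, which is $\approx\rho_p^3\E[Y_{12}Y_{13}Y_{23}]\approx\rho_p^3 d^{-1/2}$. One would then show $\Var_P(S_n^{\mathsf{G}})\to1$ and prove normality by a martingale CLT over vertices. This route is substantially more laborious.
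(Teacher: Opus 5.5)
Your proposal is correct and follows the paper's proof. The null limit is the signed-triangle CLT (Janson). The alternative limit comes from applying Le Cam's third lemma to the joint limit of $\bigl(S_n^{\mathsf{G}},\log\frac{\dd P_{n,d}^{\mathsf{G}}}{\dd Q_n^{\mathsf{G}}}\bigr)$, which is furnished by the linear expansion in Theorem~\ref{thm:RGG_ER_LLR_strongerSignedTriangle_asymptotics}; that expansion comes from the Davies argument, and the paper also handles the $\tau_{d,p}$ versus $z_p$ threshold correction there.
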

Combining \eqref{eq:tv-main}, \eqref{eq:relation-TV-testing}, 
and Theorem~\ref{thm:RGG_ER_optimal_test} shows that the signed
triangle test $\varphi_n$ is asymptotically optimal. 

\subsection{Rank statistics and kernel matrices}
\label{sec:ranks}\label{subsec:rank-results}

Next, we consider the rank statistics of $Y$ from $P_{n,d}$ or $Q_n$ given at the beginning of Section~\ref{sec:main-results}. Note that there are no ties in $(Y_{ij})_{1\le i<j\le n}$ almost surely under both models. 
Let 
\begin{equation}
\xi_1<\cdots<\xi_{\binom{n}{2}} \label{eq:def-xi-order-statistics}
\end{equation}
be the order statistics of $(Y_{ij})_{1\le i<j\le n}$. 
The rank statistics can be equivalently represented by the bijection 
\begin{equation*}
\Pi:\binom{[n]}{2}\longrightarrow\left[\binom{n}{2}\right] 
\end{equation*}
such that 
$$
Y_{ij}=\xi_{\Pi(i,j)} .
$$
Let \(P_{n,d}^{\mathsf{R}}\) (``$\mathsf{R}$'' for rank) be the distribution of \(\Pi\) for $Y \sim P_{n,d}$. Let \(Q_n^{\mathsf{R}}\) be the distribution of \(\Pi\) for $Y \sim Q_n$, that is, the uniform distribution on bijections from
\(\binom{[n]}{2}\) to \(\left[\binom{n}{2}\right]\).

Our next result is the asymptotic normality of the log-likelihood ratio between
\(P_{n,d}^{\mathsf{R}}\) and \(Q_n^{\mathsf{R}}\) in the critical regime
\eqref{eq:RGG_critical_scaling}.

\begin{theorem}
\label{thm:rank-lan}
Suppose \eqref{eq:RGG_critical_scaling} holds. Then
\begin{equation}\label{eq:rank-log-limits}
\log\frac{\dd P_{n,d}^{\mathsf{R}}}{\dd Q_n^{\mathsf{R}}}(\Pi)
\convd
\cN\!\inparen{\pm\frac{1}{12\lambda},\,\frac{1}{6\lambda}},
\end{equation}
where ``$+$'' is for $\Pi\sim P_{n,d}^{\mathsf{R}}$ and
``$-$'' is for $\Pi\sim Q_n^{\mathsf{R}}$. Consequently,
\begin{equation}\label{eq:rank-tv}
\TV\!\left(P_{n,d}^{\mathsf{R}},Q_n^{\mathsf{R}}\right)
\longrightarrow
2\Phi\!\left(\frac1{2\sqrt{6\lambda}}\right)-1.
\end{equation}
\end{theorem}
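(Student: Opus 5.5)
The proof runs through the information-loss framework of \citet{davies1985asymptotic} and \citet{lecam1988preservation}, in the form we will use for the graph case (Theorem~\ref{thm:coarsening}). The rank likelihood ratio is a conditional expectation of the parent likelihood ratio:
\[
\frac{\dd P_{n,d}^{\mathsf{R}}}{\dd Q_n^{\mathsf{R}}}(\Pi)=\E_{Q_n}\!\left[\frac{\dd P_{n,d}}{\dd Q_n}(Y)\,\Big|\,\Pi\right].
\]
Under $Q_n$, the rank bijection $\Pi$ is uniform and independent of the order statistics $\xi$.

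The first step is the cubic expansion behind Proposition~\ref{prop:parent-asymptotics}. Under $Q_n$,
\[
\log\frac{\dd P_{n,d}}{\dd Q_n}(Y)=\frac{1}{\sqrt d}\sum_{i<j<k}Y_{ij}Y_{ik}Y_{jk}-\frac{1}{12\lambda}+o_{Q_n}(1).
\]
Here $d^{-1/2}\sum Y_{ij}Y_{ik}Y_{jk}$ has variance $\binom n3/d\to 1/(6\lambda)$, so its limit is $\cN(0,1/(6\lambda))$. I will take this expansion from the appendix proof.

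Next, split each entry as $Y_{ij}=g_{ij}+r_{ij}$, with $g_{ij}=\Phi^{-1}\!\big(\Pi(i,j)/(\binom n2+1)\big)$ the Gaussian score of the rank and $r_{ij}$ the residual. This splits the triangle statistic into an observable part
\[
T_n=d^{-1/2}\sum_{i<j<k} g_{ij}g_{ik}g_{jk}
\]
and a hidden residual $R_n$ collecting all terms containing at least one $r$. Theorem~\ref{thm:coarsening} then needs two inputs. The first is that $T_n\convd\cN(0,1/(6\lambda))$ under $Q_n$. The second is that, conditionally on $\Pi$, $R_n$ has a normal limit whose variance gives the information loss; here I aim to show it vanishes, i.e. $R_n\to0$ in $Q_n$-probability.

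For the observable part, $T_n$ is a degenerate cubic U-statistic in the exchangeable array $(g_{ij})$, obtained by sampling without replacement from the fixed score set $\{\Phi^{-1}(m/(N+1))\}_{m\le N}$, where $N=\binom n2$. These scores have empirical mean $0$ and second moment $1-O(\log N/N)$. I would prove normality either by comparing with the i.i.d. Gaussian case, or directly by the martingale CLT or the method of moments on triangle configurations. The only new feature is the small negative correlations $-1/(N-1)$ between distinct entries. Their contribution is controlled because each triangle term has mean zero, and summing over $\binom n3$ triangles costs at most a factor $n^3/N^{c}$ that is beaten by these correlations.

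For the residual, note that $\max_{ij}|\xi_{\Pi(i,j)}-g_{ij}|$ is governed by order-statistic fluctuations, of order $N^{-1/2}$ in the bulk and larger only in the tails. Then I would bound $\E_{Q_n}[R_n^2]$. A term such as $d^{-1/2}\sum r_{ij}g_{ik}g_{jk}$ has second moment of roughly $d^{-1}n^3\cdot\E[r^2]$ plus cross terms, and this is $o(1)$ because $\E[r^2]=O(\log N/N)\to0$. Terms with shared edges need independence of $\xi$ from $\Pi$ together with the weak dependence among the ranks.

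With these two inputs, Theorem~\ref{thm:coarsening} gives $\log(\dd P^{\mathsf R}_{n,d}/\dd Q^{\mathsf R}_n)=T_n-\frac{1}{12\lambda}+o_{Q_n}(1)$, which converges under $Q_n$ to $\cN(-\frac1{12\lambda},\frac1{6\lambda})$. Le Cam's third lemma then gives the limit $\cN(+\frac1{12\lambda},\frac1{6\lambda})$ under $P^{\mathsf R}_{n,d}$. The total variation limit \eqref{eq:rank-tv} follows from the standard formula for TV between two Gaussian likelihood-ratio limits, exactly as in Proposition~\ref{prop:parent-asymptotics}.

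The main obstacle is the residual step. The errors $r_{ij}$ are individually tiny but are dependent across all entries through the common order statistics $\xi$. The cubic sum has $n^3\asymp d$ terms, so summing naive bounds on individual terms will not work; cancellation must be extracted. My first attempt would condition on $\xi$, exploit that $\Pi$ is a uniform bijection, and compute the second moment of $R_n$ exactly in terms of the variance of the scores, using the fact that triangles sharing no edge are nearly uncorrelated.
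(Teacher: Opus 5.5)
Your architecture is the paper's: Theorem~\ref{thm:coarsening} with $\mathcal G_n=\sigma(\Pi)$ and no information loss, the Gaussian-score cubic statistic as observable part, a vanishing residual, then Le Cam's third lemma and Neyman--Pearson. Your $T_n$ equals $\sqrt{\binom n3/d}\,S_n^{\mathsf R}(\Pi)$, i.e.\ $S_n^{\mathsf R}(\Pi)/\sqrt{6\lambda}+o_{Q_n}(1)$.

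You do not need a separate CLT for $T_n$, and your sketch of one (a factor $n^3/N^c$ ``beaten by these correlations'') is not an argument anyway. Once $\E_{Q_n}[R_n^2]\to0$, the score statistic inherits its $\cN(0,1)$ limit from $S_n(Y)$ via Lemma~\ref{lem:cubic-statistic-clt}. This is exactly how the paper proves Proposition~\ref{prop:cubic-rank-statistic-clt}.

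The genuine gap is that the $L^2$ bound on the residual, the paper's Lemma~\ref{lem:rank-statistic-reconstruction}, carries all the difficulty, and you only state an intention to prove it.

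Write $\E(S_n(Y)-S_n^{\mathsf R}(\Pi))^2=\binom n3^{-1}\sum_{I,J}\E[(Y_I-G_I)(Y_J-G_J)]$, summed over pairs of triangles. The diagonal is routine: $\E r^2\to0$ plus uniform moment bounds and H\"older suffice, and the paper gets $\E|Y_1-G_1|^6\to0$ from rank-score convergence. What you must actually show is a covariance of $o(m^{-1/2})$ for the $\Theta(n^4)$ pairs sharing one edge, and $o(m^{-3/2})$ for the $\Theta(n^6)$ edge-disjoint pairs.

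``Nearly uncorrelated'' does not supply these rates, and they are not automatic. The errors are themselves correlated at order $1/m$: for example $\E[r_1r_2]\approx 1/m$, through the common shift $m^{-1}\sum_e Y_e$. So the extra decay has to come from the unpaired factors in each product.

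The paper gets it from exact identities rather than by conditioning on $\xi$:
\begin{itemize}[leftmargin=*]
\item The symmetry $g_{m+1-r}=-g_r$ gives $\sum_r g_r=\sum_r g_r^3=0$, with $p_2=\sum_r g_r^2$ deterministic.
\item Summing over all triples of edge coordinates, this yields $\sum_J G_J=0$ and $\sum_{J\ni 1}G_J=G_1^3-\tfrac{p_2}{2}G_1$.
\item Multiply by $Y_I$ (or $G_I$), take expectations and use exchangeability. This gives two linear relations among $a_k=\E[Y_IG_J]$, $k=|I\cap J|$, which force $a_1=O(m^{-1})$ and $a_0=O(m^{-2})$.
\item Combined with $\E[Y_IY_J]=0$ for $I\ne J$, this gives $\E(S_n(Y)-S_n^{\mathsf R})^2=\E(Y_I-G_I)^2+O(n/m+n^3/m^2)\to0$.
\end{itemize}

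Your route of conditioning on $\xi$ and expanding sampling-without-replacement moments in power sums could in principle reach comparable bounds. It would have to handle random quantities such as $\sum_k\xi_k=\sum_e Y_e=O_{Q_n}(\sqrt m)$ and mixed sums like $\sum_k \xi_k g_k$. As written, it produces none of the needed rates.
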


Theorem~\ref{thm:rank-lan} is proved in Section~\ref{subsec:rank-assembly}. 
The rank statistics from \(P_{n,d}^{\mathsf{R}}\) or \(Q_n^{\mathsf{R}}\) are \emph{coarsened} ordinal data compared to the cardinal data in the matrix from \(P_{n,d}\) or \(Q_n\). 
However, the limits in Theorem~\ref{thm:rank-lan} are exactly the same as those in Proposition~\ref{prop:parent-asymptotics}, showing that there is no information loss. 
The phenomenon that ranks preserve information is classical in statistics and has been observed in other settings \citep{hallin1988optimal,bickel1997local,hoff2014information}.

Similar to the graph case, we also have a computationally efficient test
that achieves the optimal asymptotic error using rank statistics. For
\(1\le r\le\binom{n}{2}\), define the Gaussian quantiles
\begin{equation}\label{eq:rank-gaussian-quantiles}
g_r=\Phi^{-1}\bigg(\frac r{\binom{n}{2}+1}\bigg).
\end{equation}
Using Gaussian quantiles, we define the normalized cubic
rank statistic 
\begin{equation}\label{eq:cubic-rank-statistic}
S_n^{\mathsf{R}}(\Pi)
:=\frac1{\sqrt{\binom n3}}\sum_{1\le i<j<k\le n}
g_{\Pi(i,j)}g_{\Pi(i,k)}g_{\Pi(j,k)},
\end{equation}
and define the test
\begin{equation}\label{eq:rank-test}
\psi_n(\Pi)=\1\!\left\{
S_n^{\mathsf{R}}(\Pi)>\frac{1}{2\sqrt{6\lambda}}
\right\}.
\end{equation}
The threshold test selects $P_{n,d}^{\mathsf{R}}$ (resp.~$Q_n^{\mathsf{R}}$) when \(\psi_n(\Pi)=1\) (resp.~\(\psi_n(\Pi)=0\)).
It achieves the following asymptotic error, proved in
Section~\ref{subsec:rank-assembly}.

\begin{theorem}\label{thm:rank-optimal-test}
Suppose \eqref{eq:RGG_critical_scaling} holds. Then the test
\eqref{eq:rank-test} satisfies
\begin{equation*}
P_{n,d}^{\mathsf{R}}\!\insquare{\psi_n(\Pi)=0}
+Q_n^{\mathsf{R}}\!\insquare{\psi_n(\Pi)=1}
\longrightarrow
2\Phi\!\left(-\frac1{2\sqrt{6\lambda}}\right).
\end{equation*}
\end{theorem}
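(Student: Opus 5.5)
The plan is to show that $S_n^{\mathsf{R}}(\Pi)$ converges in distribution to $\cN(0,1)$ under $Q_n^{\mathsf{R}}$ and to $\cN\bigl(1/\sqrt{6\lambda},1\bigr)$ under $P_{n,d}^{\mathsf{R}}$. Given this, with threshold $t=\frac{1}{2\sqrt{6\lambda}}$, the two errors converge to $1-\Phi(t)$ and $\Phi(t-2t)=\Phi(-t)$. Their sum is $2\Phi(-t)$, which is the claimed limit.

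The central step is to compare $S_n^{\mathsf{R}}$ with the cardinal signed-triangle statistic
\[
T_n(Y):=\binom n3^{-1/2}\sum_{i<j<k}Y_{ij}Y_{ik}Y_{jk}.
\]
The identity $g_{\Pi(i,j)}-Y_{ij}=g_{\Pi(i,j)}-\xi_{\Pi(i,j)}$ shows that $S_n^{\mathsf{R}}-T_n$ is a sum of terms in which at least one factor is the quantile error $\Delta_{ij}:=g_{\Pi(i,j)}-Y_{ij}$.

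Under $Q_n$ the entries are i.i.d.\ Gaussian, so $\Pi$ is independent of the order statistics. Write $N=\binom n2$. Standard order-statistic bounds give $\E\Delta_{ij}^2=O(\log N/N)$, and the conditional covariances of the $\Delta$'s across distinct pairs are $O(1/N)$. I expand
\[
g g g - YYY = \Delta YY + g\Delta Y + gg\Delta
\]
and compute second moments. Each centered term like $\sum \Delta_{ij}Y_{ik}Y_{jk}$ has variance $O\bigl(n^3\cdot \log N/N\bigr)$ plus cross terms. After normalization by $\binom n3$ this gives $o(1)$. Hence $S_n^{\mathsf{R}}-T_n=o_{Q_n}(1)$, and the same holds under the pushforward to $\Pi$.

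Under $Q_n$ I would then use the standard fact that $T_n\convd\cN(0,1)$, for example by the martingale CLT or fourth-moment theorem for a homogeneous Gaussian chaos of degree three. For $P_{n,d}$ I would use contiguity. Proposition~\ref{prop:parent-asymptotics} together with Le Cam's first lemma gives mutual contiguity of $P_{n,d}$ and $Q_n$, so $S_n^{\mathsf{R}}-T_n=o_{P_{n,d}}(1)$ as well.

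The limit of $T_n$ under $P_{n,d}$ follows from Le Cam's third lemma. This requires joint asymptotic normality of $\bigl(T_n,\log\frac{\dd P_{n,d}}{\dd Q_n}\bigr)$ under $Q_n$. The cubic expansion underlying Proposition~\ref{prop:parent-asymptotics} (Appendix~\ref{sec:continuous}) should give
\[
\log\frac{\dd P_{n,d}}{\dd Q_n}=\frac{1}{\sqrt{6\lambda}}\,T_n-\frac1{12\lambda}+o_{Q_n}(1),
\]
since the variance $\frac1{6\lambda}$ matches $\bigl(\binom n3/d^{3}\bigr)\cdot d^2\cdots$, i.e.\ $\binom n3/d\to 1/(6\lambda)$. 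The joint limit is then degenerate-Gaussian with covariance $1/\sqrt{6\lambda}$, and the third lemma shifts the mean of $T_n$ to $1/\sqrt{6\lambda}$ under $P_{n,d}$. Since ranks are a function of $Y$, the laws of $S_n^{\mathsf{R}}(\Pi)$ under $P^{\mathsf{R}}_{n,d}$ and $Q^{\mathsf{R}}_n$ are those under $P_{n,d}$ and $Q_n$.

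The main obstacle is the quantile-replacement bound $S_n^{\mathsf{R}}-T_n=o_{Q_n}(1)$. The naive bound $\sum|\Delta|$ is far too large, so one needs the cancellation from centering. I would first condition on the order statistics and treat $\Pi$ as a uniform permutation. The statistic then becomes a permutation-type U-statistic, and I would bound its variance via exchangeability. Extreme ranks, where $\Delta$ is larger because of tail quantiles, need a truncation argument.
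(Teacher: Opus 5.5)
Your argument is correct, and at the top level it takes a different route from the paper. The paper first proves the rank-level expansion of Theorem~\ref{thm:rank-likelihood-expansion} by applying the coarsening theorem (Theorem~\ref{thm:coarsening}) with $I=J=1/(6\lambda)$. That step requires identifying $\dd P^{\mathsf{R}}_{n,d}/\dd Q^{\mathsf{R}}_n$ as the conditional expectation \eqref{eq:rank-pushforward-likelihood} and verifying the conditional limit of the residual. The paper then applies Le Cam's third lemma to $\bigl(S_n^{\mathsf{R}},\log \dd P^{\mathsf{R}}_{n,d}/\dd Q^{\mathsf{R}}_n\bigr)$ under $Q_n^{\mathsf{R}}$.

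You never touch the rank likelihood ratio. Since $S_n^{\mathsf{R}}(\Pi(Y))$ is a function of $Y$, Theorem~\ref{thm:continuous-lan}, the third lemma at the parent level, and contiguity already give the $\cN(1/\sqrt{6\lambda},1)$ limit under $P_{n,d}$. Even more directly, you could apply the third lemma to $\bigl(S_n^{\mathsf{R}},\log \dd P_{n,d}/\dd Q_n\bigr)$ under $Q_n$, which converges jointly once $S_n^{\mathsf{R}}-T_n=o_{Q_n}(1)$. This is more economical for the statement at hand. Together with data processing it also recovers \eqref{eq:rank-tv}: with $t=1/(2\sqrt{6\lambda})$, your test gives $\TV(P^{\mathsf{R}}_{n,d},Q^{\mathsf{R}}_n)\ge 2\Phi(t)-1-o(1)$, while $\TV(P^{\mathsf{R}}_{n,d},Q^{\mathsf{R}}_n)\le\TV(P_{n,d},Q_n)\to 2\Phi(t)-1$. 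The shortcut works precisely because ranks lose no information. What the paper's route buys is the rank log-likelihood expansion and the limits \eqref{eq:rank-log-limits} themselves, in a framework shared with the graph case. There $J<I$, and a parent-level argument cannot identify the optimal error.

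Both routes rest on the same estimate $S_n^{\mathsf{R}}-T_n\to 0$ (the paper's Lemma~\ref{lem:rank-statistic-reconstruction}), and your sketch of it does not yet close. There are $\Theta(n^6)$ pairs of edge-disjoint triangles, so after dividing by $\binom n3$ their mixed moments must be $o(n^{-3})=o(N^{-3/2})$. The pairwise $O(1/N)$ covariance bound you quote only gives a total of order $n$.

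Within your permutation framework, the missing ingredient has two parts. First, every factor has conditional mean $O(N^{-1/2})$ given the order statistics; for the quantile errors, $\sum_r g_r=0$ gives $\E[\Delta_{ij}\mid\xi]=-\bar\xi$. Second, $k$-th order joint cumulants under sampling without replacement decay like $O(N^{-(k-1)})$. Together these give order $N^{-3}$, up to logarithmic factors, for edge-disjoint pairs and $O(N^{-2})$ for pairs sharing an edge. That suffices, and no truncation is needed.

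The paper argues differently. It uses the symmetry $g_{m+1-r}=-g_r$, hence $p_1=p_3=0$, to obtain the exact identities $\sum_J G_J=0$ and $\sum_{J\ni 1}G_J=G_1^3-\frac{p_2}{2}G_1$. Exchangeability then forces $\E[Y_IG_J]$ and $\E[G_IG_J]$ to be $O(m^{-2})$ for disjoint $I,J$ and $O(m^{-1})$ when $|I\cap J|=1$.
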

Combining \eqref{eq:rank-tv}, \eqref{eq:relation-TV-testing}, and
Theorem~\ref{thm:rank-optimal-test} shows that the test
\(\psi_n\) is asymptotically optimal.

The above results have the following direct consequence in detecting geometry in a random kernel matrix with an
\emph{unknown} strictly increasing kernel \(\kappa:\R\to\R\). (The case that $\kappa$ is strictly decreasing can be treated similarly.) 
Recall the matrix \(Y \sim P_{n,d}\) or \(Y \sim Q_n\) from the
beginning of Section~\ref{sec:main-results}. Suppose we observe
\(\kappa(Y)\), where \(\kappa\) is applied entrywise. 
Since $Y$ and $\kappa(Y)$ have the same rank statistics, the bijection $\Pi$ defined at the beginning of Section~\ref{subsec:rank-results} can be computed from $\kappa(Y)$. We immediately obtain the following.

\begin{corollary}
\label{cor:kernel-testing}
Suppose \eqref{eq:RGG_critical_scaling} holds.
Consider the problem of distinguishing \(Y\sim P_{n,d}\) from \(Y\sim Q_n\) using the observation \(\kappa(Y)\), where \(\kappa:\R\to\R\) is strictly increasing and is allowed to depend on $n$. 
Then the test \(\psi_n(\Pi)\), which does not require the knowledge of $\kappa$, is asymptotically optimal among all
tests based on \(\kappa(Y)\), even those that know \(\kappa\).
\end{corollary}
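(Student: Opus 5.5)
The plan is to squeeze the optimal error for observations $\kappa(Y)$ between two bounds that coincide in the limit.

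\emph{Lower bound.} Fix $n$ and a strictly increasing $\kappa$ (possibly depending on $n$). Any test $\phi(\kappa(Y))$ is a measurable function of $Y$. Hence its sum of type I and type II errors is at least the optimal error for testing $P_{n,d}$ against $Q_n$ with the full matrix $Y$ observed. By \eqref{eq:relation-TV-testing} applied to the parent models, that optimal error equals $1-\TV(P_{n,d},Q_n)$. By \eqref{eq:parent-tv} in Proposition~\ref{prop:parent-asymptotics}, this converges to
\[
2-2\Phi\!\left(\tfrac{1}{2\sqrt{6\lambda}}\right)=2\Phi\!\left(-\tfrac{1}{2\sqrt{6\lambda}}\right).
\]
The bound holds uniformly over all tests, including those that know $\kappa$. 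So
\[
\liminf_n \inf_\phi\Big(P_{n,d}[\phi(\kappa(Y))=0]+Q_n[\phi(\kappa(Y))=1]\Big)\ge 2\Phi\!\left(-\tfrac{1}{2\sqrt{6\lambda}}\right).
\]
Alternatively, and slightly more informatively, one can observe that the map $Y\mapsto\kappa(Y)$ is injective. This gives exact equality of the optimal errors for $\kappa(Y)$ and for $Y$. The inequality alone suffices for the argument.

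\emph{Upper bound.} A strictly increasing $\kappa$ preserves strict order, so $\kappa(Y_{ij})<\kappa(Y_{kl})$ if and only if $Y_{ij}<Y_{kl}$. Moreover, there are almost surely no ties among the $Y_{ij}$ under either model. Therefore the bijection $\Pi$ computed by ranking the off-diagonal entries of $\kappa(Y)$ coincides almost surely with the rank bijection of $Y$. Consequently, under $Y\sim P_{n,d}$ (resp.\ $Q_n$), the statistic $\Pi$ computed from $\kappa(Y)$ has law $P_{n,d}^{\mathsf R}$ (resp.\ $Q_n^{\mathsf R}$), whatever $\kappa$ is. The test $\psi_n(\Pi)$ uses only $\Pi$ together with the known quantities $n$ and $\lambda$, so it is a valid test based on $\kappa(Y)$ that does not require $\kappa$. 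By Theorem~\ref{thm:rank-optimal-test}, its error converges to $2\Phi(-1/(2\sqrt{6\lambda}))$.

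\emph{Conclusion.} The two bounds match, so $\psi_n$ is asymptotically optimal among all tests based on $\kappa(Y)$, including those with knowledge of $\kappa$. The substantive content lies entirely in Theorem~\ref{thm:rank-optimal-test} (and, through it, Theorem~\ref{thm:rank-lan}), which is assumed here. The only point needing care in this corollary is the almost-sure absence of ties: it ensures that $\Pi$ is well defined and that its distribution does not depend on $\kappa$. This holds because the entries have a continuous joint law under $Q_n$, and under $P_{n,d}$ each event $\{Y_{ij}=Y_{kl}\}$ with distinct pairs has probability zero by continuity of the spherical distribution.
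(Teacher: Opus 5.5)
Your proposal is correct and follows essentially the same route as the paper: achievability from Theorem~\ref{thm:rank-optimal-test} applied to the rank bijection $\Pi$, which is unchanged by $\kappa$, and the matching lower bound from Proposition~\ref{prop:parent-asymptotics} with \eqref{eq:relation-TV-testing}, since any test based on $\kappa(Y)$ is a function of $Y$. Your explicit checks that ties occur with probability zero and that $\Pi$ computed from $\kappa(Y)$ has law $P_{n,d}^{\mathsf R}$ or $Q_n^{\mathsf R}$ regardless of $\kappa$ fill in details the paper's short proof leaves implicit.
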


\begin{proof}
Theorem~\ref{thm:rank-optimal-test} gives the asymptotic error for the test \(\psi_n(\Pi)\). Even if $\kappa$ were known, observing
\(\kappa(Y)\) would be equivalent to observing \(Y\). Therefore, Proposition~\ref{prop:parent-asymptotics} together with \eqref{eq:relation-TV-testing} shows the claimed optimality.
\end{proof}

\section{Proofs of main results}\label{sec:proofs-main}

We first state in Section~\ref{sec:coarsening} the general result of \cite{davies1985asymptotic} for asymptotic normality under information loss. We then study random graphs and rank statistics in Sections~\ref{sec:assembly} and~\ref{subsec:rank-assembly} respectively.

\subsection{Likelihood ratio for coarsened observations}\label{sec:coarsening}

For probability distributions \(\mu,\nu\) on \(\R\), we use the \emph{bounded Lipschitz distance} (see, e.g., \citet{dudley2002real})
\[
d_{\BL}(\mu,\nu)=
\sup_{\substack{\|f\|_\infty\le1\\\operatorname{Lip}(f)\le1}}
\left|\int f\dd\mu-\int f\dd\nu\right|.
\]
It is classical that convergence in $d_{\BL}$ is equivalent to convergence in distribution. 
We reformulate the result of \citet{davies1985asymptotic} below and provide a self-contained proof in Appendix~\ref{app:coarsening-proof}.

\begin{theorem}[Adaptation of \citet{davies1985asymptotic}, Theorem 4.1]\label{thm:coarsening}
Let \(P_n\ll Q_n\) on a probability space \((\Omega_n,\cF_n)\) with likelihood ratio
\(L_n=\dd P_n/\dd Q_n\). Consider $\sigma$-algebras \(\cG_n\subseteq\cF_n\).  Suppose there are
constants \(0\le J\le I\), a \(\cG_n\)-measurable random variable \(M_n\), and a
random variable \(R_n\) such
that the following hold:
\begin{enumerate}
\item 
As $n \to \infty$, we have 
\begin{equation}\label{eq:coarsening-expansion}
\log L_n=M_n+R_n-\frac{I}{2}+o_{Q_n}(1),
\end{equation}
where $o_{Q_n}(1)$ denotes a random variable that converges to $0$ in $Q_n$-probability. 
\item The random variables $M_n$ converge as 
\begin{equation}\label{eq:coarsening-M}
M_n\convd\cN(0,J).
\end{equation}
\item The conditional distribution of $R_n$ given $\cG_n$, denoted by $\cL_{Q_n}(R_n\mid\cG_n)$, converges as
\begin{equation}\label{eq:coarsening-conditional}
d_{\BL}\!\left(\cL_{Q_n}(R_n\mid\cG_n),
\cN(0,I-J)\right) = o_{Q_n}(1).
\end{equation}
\end{enumerate}
Then, for \(L_n^{\cG}:=\E_{Q_n}[L_n\mid\cG_n]\),
\begin{equation}\label{eq:coarsening-conclusion}
\log L_n^{\cG}=M_n-\frac{J}{2}+o_{Q_n}(1).
\end{equation}
\end{theorem}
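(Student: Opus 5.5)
We prove Theorem~\ref{thm:coarsening} by computing $L_n^{\cG}$ directly from the expansion \eqref{eq:coarsening-expansion}. Since $M_n$ is $\cG_n$-measurable, we can pull $e^{M_n}$ out of the conditional expectation. Heuristically this gives
\[
L_n^{\cG}=e^{M_n-I/2}\,\E_{Q_n}\!\left[e^{R_n+o(1)}\mid\cG_n\right].
\]
If the conditional law of $R_n$ is close to $\cN(0,I-J)$, the conditional expectation should be close to $e^{(I-J)/2}$, which yields $\log L_n^{\cG}=M_n-J/2+o(1)$. The difficulty is that $x\mapsto e^x$ is unbounded, so $d_{\BL}$-convergence does not directly control exponential moments. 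We therefore prove the two inequalities separately, using truncation for the lower bound and a uniform integrability (contiguity) argument for the upper bound.

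\emph{Lower bound.} Write $\log L_n=M_n+R_n-I/2+\epsilon_n$ with $\epsilon_n=o_{Q_n}(1)$. For fixed $K$, note that $e^{R_n+\epsilon_n}\ge e^{\min(R_n,K)}e^{-\delta}$ on the event $\{|\epsilon_n|\le\delta\}$. The function $x\mapsto e^{\min(x,K)}$ is bounded and Lipschitz on $(-\infty,K]$ up to constants depending on $K$. Hence \eqref{eq:coarsening-conditional} gives
\[
\E_{Q_n}\!\left[e^{\min(R_n,K)}\mid\cG_n\right]\to \E\!\left[e^{\min(Z,K)}\right]\quad\text{in probability},\qquad Z\sim\cN(0,I-J),
\]
and the right-hand side tends to $e^{(I-J)/2}$ as $K\to\infty$. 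The event $\{|\epsilon_n|>\delta\}$ needs care inside the conditional expectation. We control it via $\E_{Q_n}[Q_n(|\epsilon_n|>\delta\mid\cG_n)]\to0$: the conditional probability is small in probability, and a truncated bounded integrand then contributes only $o(1)$. This yields $\log L_n^{\cG}\ge M_n-J/2-\eta$ with $Q_n$-probability tending to one, for every $\eta>0$.

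\emph{Upper bound.} Here we use $\E_{Q_n}[L_n^{\cG}]\le1$. Set $T_n:=\log L_n^{\cG}-M_n+J/2$. By the lower bound, $\liminf$ of $T_n$ is at least $0$ in probability. We also know $M_n\convd\cN(0,J)$, so that $\E[e^{M_n-J/2}]$ "should" equal $1$ in the limit. Suppose, for contradiction, that $T_n>\eta$ with probability at least $\epsilon$ along a subsequence. Applying Fatou's lemma to $e^{M_n-J/2}(e^{T_n}\wedge C)$, combined with the lower bound, would then give $\liminf\E_{Q_n}[L_n^{\cG}]>1$, which is a contradiction. To make this rigorous, we pass along a further subsequence to joint weak convergence of $(M_n,\min(T_n,C))$ (tightness holds because $T_n$ is bounded below asymptotically and truncated above). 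Fatou's lemma then gives $\E[e^{M-J/2}e^{T\wedge C}]\le1$ with $T\ge0$ almost surely. Since $\E[e^{M-J/2}]=1$ and $e^{M-J/2}>0$, this forces $T\wedge C=0$ almost surely, which contradicts the assumed mass on $\{T>\eta\}$.

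The main obstacle is the lower-bound step. There we must pass from $d_{\BL}$-convergence of conditional laws, which holds only in probability, to convergence of truncated exponential conditional moments. At the same time we must handle the $o_{Q_n}(1)$ error $\epsilon_n$ inside the conditional expectation, since it need not be $\cG_n$-measurable. For the latter, we first try bounding the bad-event contribution by dropping it, which is allowed because it is nonnegative in the lower bound. This appears sufficient, since only a lower bound is needed at that stage.
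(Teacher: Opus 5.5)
Your proof is correct, but it follows a different route from the paper. The paper proves the two-sided approximation $L_n^{\cG}=\exp(M_n-J/2)+o_{Q_n}(1)$ directly, through a three-term decomposition built on $L_n\wedge K$:
\begin{itemize}
\item the truncated part is handled by $d_{\BL}$, as in your lower bound;
\item the tail $\E_{Q_n}[(L_n-K)_+\mid\cG_n]$ is controlled by uniform integrability of $L_n$;
\item $g_K\uparrow e^{x-J/2}$ uniformly on compacts by Dini's theorem.
\end{itemize}
The substantive step there is the uniform integrability. The paper first identifies the joint limit of $(M_n,R_n)$ as $\cN(0,J)\otimes\cN(0,I-J)$ via characteristic functions, deduces $L_n\convd e^{G-I/2}$ with $G\sim\cN(0,I)$, and matches the limiting mean one with $\E_{Q_n}L_n=1$. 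This is where $P_n\ll Q_n$ is used.

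You use truncation only for a one-sided bound. You then replace uniform integrability of $L_n$ by a mass-conservation argument at the level of $L_n^{\cG}$: since $\E_{Q_n}L_n^{\cG}\le 1$ while $\E e^{M-J/2}=1$ for $M\sim\cN(0,J)$, the lower bound leaves no room for $T_n$ to be positive with non-vanishing probability. This avoids computing the joint limit of $(M_n,R_n)$ and avoids Dini. It also only needs $\E_{Q_n}L_n\le 1$, so absolute continuity is not used. The paper's route, in exchange, gives an explicit additive approximation and, as a byproduct, uniform integrability of $L_n$ (contiguity of $P_n$ with respect to $Q_n$).

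Two small points to tighten in your contradiction step. First, transfer the mass through the closed set $\{t\ge\eta\}$ (taking $C\ge\eta$) in the portmanteau theorem; the open set $\{t>\eta\}$ gives the inequality in the wrong direction. Second, invoke Fatou in its weak-convergence form for the nonnegative continuous function $(m,t)\mapsto e^{m-J/2+t}$ on $\R\times[-\infty,C]$. This also absorbs the event $\{L_n^{\cG}=0\}$, where $T_n=-\infty$, and makes tightness of the second coordinate automatic.
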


The random variable \(M_n\) represents a dominating statistic visible from
\(\cG_n\).  In the Gaussian limit, integrating out the residual $\exp(R_n)$ in the likelihood ratio $L_n$ contributes
a factor \(\exp((I-J)/2)\), changing the constant term in the log-likelihood ratio from
\(-I/2\) in \eqref{eq:coarsening-expansion} to \(-J/2\) in \eqref{eq:coarsening-conclusion}. 

\subsection{Proofs for random geometric graphs}
\label{sec:assembly}

We apply Theorem~\ref{thm:coarsening} to prove
Theorem~\ref{thm:RGG_ER_LLR_asymptotics}. 
Let us first prove a strong result---an expansion of the log-likelihood ratio. 

\begin{theorem}\label{thm:RGG_ER_LLR_strongerSignedTriangle_asymptotics}
In the setting of Theorem~\ref{thm:RGG_ER_LLR_asymptotics}, let $S_n^{\mathsf{G}}(A)$ be given by \eqref{eq:signedtriangle_def}. Then for $A \sim Q_n^{\mathsf{G}}$,
\begin{equation*}
\log\frac{\dd P_{n,d}^{\mathsf{G}}}{\dd Q_n^{\mathsf{G}}}(A)
=\frac{\rho_p^3}{\sqrt{6\lambda}}S_n^{\mathsf{G}}(A)
-\frac{\rho_p^6}{12\lambda}+o_{Q_n^{\mathsf{G}}}(1).
\end{equation*}
\end{theorem}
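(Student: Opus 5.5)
We will apply Theorem~\ref{thm:coarsening} with $P_n=P_{n,d}$, $Q_n=Q_n$ on the space of matrices $Y$, and with $\cG_n=\sigma(A)$, where $A_{ij}=\1\{Y_{ij}\ge \tau_{d,p}\}$. The first task is to arrange a common threshold. Under $Q_n$ the natural threshold is $z_p$ as in \eqref{eq:threshold-z-p}, while under $P_{n,d}$ it is $\tau_{d,p}$. Since $\sqrt d\langle x_i,x_j\rangle$ is exactly distributed so that $\tau_{d,p}=z_p+O(1/d)$, we may apply a fixed monotone transformation $h_d$ to each entry, mapping the marginal law of $Y_{ij}$ under $P_{n,d}$ to $\cN(0,1)$. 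This transformation changes neither the graph law nor the likelihood ratio of the transformed matrices. The parent likelihood ratio changes only by a product of marginal density ratios, and this perturbation should be checked to be $o(1)$. Alternatively, we note that only the matrix-level expansion is needed, and work directly with $\tau_{d,p}$, absorbing the mismatch into the $o_{Q_n}(1)$ term.

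Step one is the parent expansion. From the proof of Proposition~\ref{prop:parent-asymptotics} in Appendix~\ref{sec:continuous}, we take (or reprove) the cubic expansion under $Q_n$:
\[
\log\frac{\dd P_{n,d}}{\dd Q_n}(Y)=\frac{1}{\sqrt{6\lambda}}T_n-\frac{1}{12\lambda}+o_{Q_n}(1),\qquad T_n=\binom n3^{-1/2}\sum_{i<j<k}Y_{ij}Y_{ik}Y_{jk}.
\]
We then decompose each entry as $Y_{ij}=\rho_p\bar A_{ij}+W_{ij}$, where $W_{ij}=Y_{ij}-\E[Y_{ij}\mid A_{ij}]$. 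This works because $\E[Y_{ij}\mid A_{ij}]=\rho_p\bar A_{ij}$: a two-point variable is linear in $\bar A_{ij}$ with coefficient given by the correlation \eqref{eq:rho-p-correlation}. Under $Q_n$, conditional on $A$, the variables $W_{ij}$ are independent and mean zero, with conditional variance depending only on $A_{ij}$. Expanding the product gives $T_n=\rho_p^3 S_n^{\mathsf{G}}(A)+U_n$, where $U_n$ collects the seven mixed terms, each containing at least one $W$ factor. We set $M_n=\rho_p^3 S_n^{\mathsf{G}}/\sqrt{6\lambda}$, $R_n=U_n/\sqrt{6\lambda}$, $I=1/(6\lambda)$ and $J=\rho_p^6/(6\lambda)$.

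Step two verifies the hypotheses of Theorem~\ref{thm:coarsening}. First, $S_n^{\mathsf{G}}\convd\cN(0,1)$ under $Q_n^{\mathsf{G}}$, since it is a degenerate $U$-statistic of i.i.d.\ centered unit-variance entries. This follows from the fourth-moment theorem for homogeneous polynomial chaos (de Jong), because the fourth cumulant is $O(1/n)$. Second, and this is the main obstacle, we need the conditional CLT \eqref{eq:coarsening-conditional}: given $A$, the variable $U_n$ must be asymptotically $\cN(0,1-\rho_p^6)$ in $d_{\BL}$, in probability over $A$.

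Conditionally on $A$, the quantity $U_n$ is a multilinear polynomial of degree at most three in independent centered variables $W_{ij}$ with bounded moments, with coefficients depending on $A$. We plan two checks.
\begin{itemize}
\item \emph{Conditional variance.} The conditional variance is $\binom n3^{-1}\sum_{\text{triangles}}$ of products of $\rho_p^2\bar A^2$ or $\Var(W\mid A)$ over the three edges, summed over the nonempty subsets of edges carrying a $W$. Distinct triangles give orthogonal terms. By the law of large numbers over triangles this concentrates at $\E[(\rho_p^2\bar A^2+\Var(W\mid A))^3]-\rho_p^6=1-\rho_p^6$, using $\E[\rho_p^2\bar A^2+W^2]=1$ and independence across the three edges.
\item \emph{Conditional Gaussianity.} We apply de Jong's theorem, or the multivariate-chaos fourth-moment criterion for non-homogeneous multilinear forms via Lindeberg plus maximal influence. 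The maximal influence of any edge is $O(1/n^2)$. The conditional fourth cumulant, or equivalently the contraction norms, must be bounded by $O(1/n)$ in expectation over $A$, which we control by a moment computation under $Q_n^{\mathsf{G}}$.
\end{itemize}

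With these hypotheses in place, Theorem~\ref{thm:coarsening} gives $\log L^{\cG}=M_n-J/2+o(1)$. This is exactly the claim, once we identify $L^{\cG}=\dd P^{\mathsf{G}}_{n,d}/\dd Q^{\mathsf{G}}_n$ and handle the threshold issue from the first paragraph.
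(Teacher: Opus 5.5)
Your skeleton is the paper's. You apply Theorem~\ref{thm:coarsening} to $(P_{n,d},Q_n)$ with $\cG_n=\sigma(A)$, with the same $M_n=\rho_p^3S_n^{\mathsf G}(A)/\sqrt{6\lambda}$ and $R_n=(S_n(Y)-\rho_p^3S_n^{\mathsf G}(A))/\sqrt{6\lambda}$, and with $I=1/(6\lambda)$ and $J=\rho_p^6/(6\lambda)$. You also use the parent expansion of Theorem~\ref{thm:continuous-lan} and a triangle-count CLT (de Jong in place of Janson is fine). You depart from the paper in two places.

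\emph{Threshold.} The paper thresholds at $z_p$ under both models, producing $\widetilde P^{\mathsf G}_{n,d}$. It then transfers the expansion via $\TV(\widetilde P^{\mathsf G}_{n,d},P^{\mathsf G}_{n,d})=O(n^2/d)$ (Lemma~\ref{lem:rgg-threshold-correction}) and tightness of $1/\widetilde L_n$. Your quantile map, call it $T_d=\Phi^{-1}\circ H_d$ since $h_d$ is the paper's marginal density, gives the exact graph law. However, it changes the parent pair, so three routine checks are needed, and you state only the first:
\begin{enumerate}
\item The edge sum of marginal log-density ratios is $O_{Q_n}(n/d)$.
\item $T_d^{-1}(\tilde Y)$ has i.i.d.\ $h_d$ entries, and this law is within $\TV$ distance $O(n^2/d)$ of $Q_n$, so Theorem~\ref{thm:continuous-lan} transfers.
\item $S_n(T_d^{-1}(\tilde Y))-S_n(\tilde Y)\to0$ in $L^2$, so that $M_n$ and $R_n$ can be built from the Gaussian $\tilde Y$.
\end{enumerate}
Your fallback of working directly with $\tau_{d,p}$ is imprecise. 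Under $Q_n$ this gives $G(n,p_d')$ with $p_d'=1-\Phi(\tau_{d,p})$, so the mismatch is a change of reference measure at the graph level. It is not part of the parent $o_{Q_n}(1)$, and it is harmless only because $n|p_d'-p|\to0$.

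\emph{Conditional CLT.} The paper avoids quenched estimates altogether. Lemma~\ref{lem:two-replica-criterion} reduces the claim to an unconditional joint limit for two conditionally independent replicas $Y^{(1)},Y^{(2)}$. The triples $(\bar A_e,Y^{(1)}_e,Y^{(2)}_e)$ are i.i.d.\ across edges with correlations $\rho_p,\rho_p,\rho_p^2$, and Lemma~\ref{lem:triangle-vector-clt} (covariances cube entrywise) then gives $(R_n^{(1)},R_n^{(2)})\convd\cN(0,1-\rho_p^6)^{\otimes2}$ directly.

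Your quenched route is viable. It even yields the split of $1-\rho_p^6$ over $W$-degrees $1,2,3$ as $3\rho_p^4(1-\rho_p^2)+3\rho_p^2(1-\rho_p^2)^2+(1-\rho_p^2)^3$. But it is much heavier. It needs an invariance principle for non-homogeneous multilinear forms and a fourth-moment theorem applied to each degree component separately; one cumulant bound for $U_n$ as a whole does not suffice. It also needs contraction bounds for random-coefficient arrays. You carry out none of these.

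One step is false as stated: ``distinct triangles give orthogonal terms'' holds only after averaging over $A$. Triangles $\{i,j,k\}$ and $\{i,j,k'\}$ both contain a term linear in $W_{ij}$. Their conditional covariance is $\rho_p^4\Var(W_{ij}\mid A_{ij})\bar A_{ik}\bar A_{jk}\bar A_{ik'}\bar A_{jk'}\neq0$. So the conditional variance is your diagonal sum plus
\[
D_n=\binom n3^{-1}\rho_p^4\sum_{i<j}\Var(W_{ij}\mid A_{ij})\sum_{k\neq k'}\bar A_{ik}\bar A_{jk}\bar A_{ik'}\bar A_{jk'}.
\]
This is repairable. $D_n$ has mean zero, and two of its summands (4-cycles weighted by a function of a diagonal) correlate only when the 4-cycles coincide, so $\E D_n^2=O(n^{-2})$. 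It must nonetheless be checked. Similarly, the maximal influence involves $c_{ij}(A)^2/\binom n3$ with $c_{ij}=\rho_p^2\sum_k\bar A_{ik}\bar A_{jk}$. It is therefore $O(\log n/n^2)$ with high probability, not deterministically $O(1/n^2)$.
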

This result identifies the signed triangle count as an asymptotically optimal statistic. The asymptotic normality of $S_n^{\mathsf{G}}(A)$ itself (c.f.~Proposition \ref{prop:signed-triangle-clt}) then leads to Theorem \ref{thm:RGG_ER_LLR_asymptotics}.

Recall that $P_{n,d}^{\mathsf{G}}$ and $Q_n^{\mathsf{G}}$ are obtained from \(P_{n,d}\) and
\(Q_n\) by thresholding the matrix entries at $\tau_{d,p}$ and $z_p$ respectively, as in \eqref{eq:threshold-tau-p} and \eqref{eq:threshold-z-p}. 
To address the small discrepancy between the thresholds $\tau_{d,p}$ and $z_p$, we let $\widetilde P_{n,d}^{\mathsf{G}}$ denote the distribution of $A$ with $A_{ij}$ given by \eqref{eq:threshold-z-p} where $Y \sim P_{n,d}$. 
In other words, $\widetilde P_{n,d}^{\mathsf{G}}$ is the distribution of the random geometric graph obtained from thresholding $Y$ at the Gaussian quantile $z_p$. 
Lemma~\ref{lem:rgg-threshold-correction} shows that $\TV(\widetilde P_{n,d}^{\mathsf{G}},P_{n,d}^{\mathsf{G}})=O(n^2/d)=o(1)$ in the regime of interest \eqref{eq:RGG_critical_scaling}. 
Therefore, we can work with the distribution $\widetilde P_{n,d}^{\mathsf{G}}$ below.

The application of Theorem~\ref{thm:coarsening} starts with the following simple result. 

\begin{lemma}\label{lem:pushforward}
For \(d\ge n+1\), almost surely under \(Q_n^{\mathsf{G}}\),
\begin{equation}\label{eq:pushforward}
\frac{\dd \widetilde P_{n,d}^{\mathsf{G}}}{\dd Q_n^{\mathsf{G}}}(A)
=\E_{Q_n}\left[
\frac{\dd P_{n,d}}{\dd Q_n}(Y)
\ \middle|\ A\right].
\end{equation}
\end{lemma}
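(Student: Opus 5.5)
The plan is the standard change-of-measure computation for a pushforward under a deterministic map. Write $T:\R^{\binom{[n]}{2}}\to\{0,1\}^{\binom{[n]}{2}}$ for the entrywise thresholding map $T(Y)_{ij}=\1\{Y_{ij}\ge z_p\}$. By definition and by the coupling \eqref{eq:threshold-z-p}, $\widetilde P_{n,d}^{\mathsf{G}}=P_{n,d}\circ T^{-1}$ and $Q_n^{\mathsf{G}}=Q_n\circ T^{-1}$. The identity \eqref{eq:pushforward} is then the familiar fact that the likelihood ratio of pushforward measures equals the conditional expectation of the original likelihood ratio given the $\sigma$-algebra generated by $T$. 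Since $A$ takes finitely many values, no abstract Radon--Nikodym machinery is needed.

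The first step is to show $P_{n,d}\ll Q_n$, so that $L=\dd P_{n,d}/\dd Q_n$ exists. This is where $d\ge n+1$ enters. The vector $(Y_{ij})_{i<j}$ is a function of the Gram matrix of $x_1,\dots,x_n$. For $d\ge n$ the Gram matrix of i.i.d.\ uniform points on $\SSS^{d-1}$ has a density on the set of correlation matrices; for example, the off-diagonal entries have joint density proportional to $\det(G)^{(d-n-1)/2}$ on the positive definite correlation matrices. Hence the law of $(Y_{ij})_{i<j}$ has a Lebesgue density on $\R^{\binom n2}$. Since $Q_n$ has an everywhere positive Gaussian density, $P_{n,d}\ll Q_n$. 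I expect this to be the only nontrivial point. The paper presumably uses or proves this density formula in Appendix~\ref{sec:continuous} when treating Proposition~\ref{prop:parent-asymptotics}, so I will cite it rather than rederive it. Alternatively, $d\ge n+1$ makes the exponent nonnegative, so the density is bounded on a bounded set.

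The second step is the computation itself. Fix $a\in\{0,1\}^{\binom{[n]}{2}}$ with $Q_n^{\mathsf{G}}(a)=p^{|a|}(1-p)^{\binom n2-|a|}>0$; every $a$ has positive mass since $p\in(0,1)$. Then
\[
\widetilde P_{n,d}^{\mathsf{G}}(a)=P_{n,d}(T(Y)=a)=\E_{Q_n}\bigl[L(Y)\1\{T(Y)=a\}\bigr].
\]
Dividing by $Q_n^{\mathsf{G}}(a)=Q_n(T(Y)=a)$ gives the elementary conditional expectation $\E_{Q_n}[L(Y)\mid T(Y)=a]$. Because $\sigma(A)$ is generated by the finite partition $\{T=a\}$, this equals $\E_{Q_n}[L\mid A]$ evaluated on $\{A=a\}$. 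Consequently, \eqref{eq:pushforward} holds for every $a$ and in particular $Q_n^{\mathsf{G}}$-almost surely.

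The same computation shows $\widetilde P_{n,d}^{\mathsf{G}}\ll Q_n^{\mathsf{G}}$ trivially.
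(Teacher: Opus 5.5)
Your proposal is correct and follows essentially the same argument as the paper: both obtain $P_{n,d}\ll Q_n$ from the Gram-matrix density (Lemma~\ref{lem:exact-cont-likelihood}) and then verify the Radon--Nikodym identity by a change of measure. The only difference is that you test against the indicators of the atoms $\{A=a\}$, which suffices because $A$ is discrete and every atom has positive $Q_n^{\mathsf{G}}$-mass, whereas the paper tests against an arbitrary bounded $f(A)$ using the tower property.
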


\begin{proof}
For \(d\ge n+1\), Lemma~\ref{lem:exact-cont-likelihood} gives
\(P_{n,d}\ll Q_n\).  If \(f\) is any bounded
measurable function of \(A\), then the tower property and a change of measure give
\[
\E_{Q_n^{\mathsf{G}}}\!\left[f(A)
\E_{Q_n}\!\left[
\frac{\dd P_{n,d}}{\dd Q_n}(Y)
\middle| A\right]\right]
=\E_{Q_n}\!\left[
f(A)\frac{\dd P_{n,d}}{\dd Q_n}(Y)\right]
=\E_{P_{n,d}}[f(A)]
=\E_{\widetilde P_{n,d}^{\mathsf{G}}}[f(A)].
\]
Thus the conditional expectation is the Radon--Nikodym derivative, which proves \eqref{eq:pushforward}.
\end{proof}

We now verify the hypotheses of Theorem~\ref{thm:coarsening}. 
The first ingredient is a large-$n$ approximation of the log-likelihood ratio between the parent distributions $P_{n,d}$ and $Q_n$. Define the
cubic statistic
\begin{equation*}
S_n(Y):=\frac{1}{\sqrt{\binom n3}}\sum_{1\le i<j<k\le n}Y_{ij}Y_{ik}Y_{jk}.
\end{equation*}

\begin{theorem}\label{thm:continuous-lan}
Suppose \eqref{eq:RGG_critical_scaling} holds. Then, for $Y\sim Q_n$,
\begin{equation}\label{eq:continuous-lan}
\log\frac{\dd P_{n,d}}{\dd Q_n}(Y)
=\frac{S_n(Y)}{\sqrt{6\lambda}}-\frac{1}{12\lambda}
+o_{Q_n}(1).
\end{equation}
\end{theorem}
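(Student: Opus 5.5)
We need to prove Theorem~\ref{thm:continuous-lan}, the expansion of the parent log-likelihood ratio under $Q_n$. The approach is to write the exact density of $Y$ under $P_{n,d}$ and expand its logarithm. We use the structure exploited in \citet{racz2019smooth}, adapted to spherical points and zero diagonal, via the exact likelihood from Lemma~\ref{lem:exact-cont-likelihood}.

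\textbf{Exact density.} For $d\ge n+1$ the Gram matrix $G=I+Y/\sqrt d$ of spherical points has density proportional to $\det(G)^{(d-n-1)/2}$ on the set of positive definite matrices with unit diagonal. This is the standard density of the correlation matrix of $n$ i.i.d.\ uniform spherical points; the normalizing constant is explicit as a ratio of Gamma functions. Hence
\[
\log\frac{\dd P_{n,d}}{\dd Q_n}(Y)=c_{n,d}+\frac{d-n-1}{2}\log\det\!\big(I+Y/\sqrt d\big)+\frac12\sum_{i<j}Y_{ij}^2 .
\]

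\textbf{Trace expansion.} Expand $\log\det(I+H)=\sum_{k\ge1}(-1)^{k+1}\tr(H^k)/k$ with $H=Y/\sqrt d$. This is valid on the high-probability event $\|Y\|_{\op}\le C\sqrt n$, where $\|H\|_{\op}\lesssim n^{-1}\to0$. The terms behave as follows.
\begin{itemize}[leftmargin=*]
\item The $k=1$ term vanishes because the diagonal is zero.
\item The $k=2$ term is $-\frac{d-n-1}{4d}\tr(Y^2)=-\frac{d-n-1}{2d}\sum_{i<j}Y_{ij}^2$. Combined with $+\frac12\sum Y_{ij}^2$ it leaves $\frac{n+1}{2d}\sum Y_{ij}^2$. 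This equals its mean $\approx n^3/(4d)$ plus fluctuations of order $n\cdot n/d=o(1)$.
\item The $k=3$ term is $\frac{d-n-1}{6d^{3/2}}\tr(Y^3)=\frac{d-n-1}{d^{3/2}}\sum_{i<j<k}Y_{ij}Y_{ik}Y_{jk}$. Using $\sqrt{\binom n3}\sim n^{3/2}/\sqrt6$ and $d\sim\lambda n^3$, this equals $S_n(Y)/\sqrt{6\lambda}+o_{Q_n}(1)$.
\item The $k=4$ term is $-\frac{d}{4d^2}\tr(Y^4)$. Since $\tr(Y^4)=2n^3+O_{Q_n}(n^{2})$ (GOE moment, with fluctuations of order $n^2$ from Wigner-type variance computations), this contributes $-n^3/(2d)+o(1)$.
\item The terms with $k\ge5$ are bounded by $d\sum_{k\ge5} n\|H\|_{\op}^k\lesssim d\,n\,(n/d)^{5/2}=n^{7/2}d^{-3/2}\to0$ on the event.
\end{itemize}

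\textbf{Constants.} It remains to show that $c_{n,d}$ plus the deterministic pieces $n^3/(4d)$ and $-n^3/(2d)$ equals $-1/(12\lambda)+o(1)$. This requires an asymptotic expansion of the log of the normalizing constant. That constant is a product of ratios $\Gamma(\tfrac{d-i}{2})/\Gamma(\tfrac{d-i-1}{2})$ times powers of $d$ and $\pi$. A Stirling expansion to third order in $i/d$, summed over $i\le n$, yields terms of size $n^2/d$, $n^3/d^2$, and $n^4/d^2$. The term of size $n^4/d^2$ survives only through cancellation, and the net value must come out to $+n^3/(4d)-1/(12\lambda)$ after cancellation.

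\textbf{Main obstacle.} The careful bookkeeping of these constants is the delicate step. I would cross-check it with a shortcut: since $\E_{Q_n}[\dd P_{n,d}/\dd Q_n]=1$, the constant must make the limit $\cN(-\sigma^2/2,\sigma^2)$ with $\sigma^2=1/(6\lambda)$. I would first prove that the random part converges, using the CLT for $S_n$ under $Q_n$ (limit $\cN(0,1)$). Then uniform integrability, or Le Cam's first lemma via contiguity, pins the constant to $-1/(12\lambda)$ without grinding the Gamma asymptotics. Rigorously, a uniform-integrability argument needs a second-moment bound; I would instead compute the $\chi^2$ divergence or use Fatou together with a matching lower bound from the explicit Stirling computation. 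The remaining concentration estimates for $\tr(Y^2)$ and $\tr(Y^4)$ are routine variance computations.
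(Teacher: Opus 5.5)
\textbf{Verdict: there is a genuine gap.} Your skeleton matches the paper's: the exact Gram density, the trace expansion on $\{\norm{Y}_{\op}\le C\sqrt n\}$, the cubic term giving $S_n(Y)/\sqrt{6\lambda}$, the $k\ge5$ remainder $O(n^{7/2}d^{-3/2})$, and $o(1)$ fluctuations of the $\tr(Y^2)$ and $\tr(Y^4)$ terms. What is missing is the one step that produces $-\frac{1}{12\lambda}$, and your bookkeeping for it is wrong.

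\textbf{The constant.} The quartic coefficient is $\frac{d-n-1}{2}\cdot\bigl(-\frac14\bigr)\cdot d^{-2}=-\frac{d-n-1}{8d^2}$, not $-\frac1{4d}$. You dropped the factor $\frac12$ that you kept for $k=2,3$. Since $\E_{Q_n}\tr(Y^4)=2n(n-1)(n-2)+3n(n-1)$, the quartic term contributes $-\frac{n^3}{4d}+o(1)$, which exactly cancels the $+\frac{n^3}{4d}$ from the quadratic term. So the claim to prove is simply $c_{n,d}=K_{n,d}\to-\frac1{12\lambda}$, not $c_{n,d}\approx\frac{n^3}{4d}-\frac1{12\lambda}$. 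With your coefficients the total constant would come out as $-\frac1{3\lambda}$, contradicting the theorem.

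Your description of the Gamma asymptotics is also off. The surviving contribution is of order $n^3/d$, namely $K_{n,d}=-\frac{n(n-1)(2n+5)}{24d}+O(n^4/d^2)$, and the $n^4/d^2$ term is pure error rather than something that survives. The paper gets this as follows. Set $x=d/2$ and $m=\binom n2$, and write $K_{n,d}=\sum_{r=1}^{n-1}\bigl(\log\Gamma(x)-\log\Gamma(x-\tfrac r2)\bigr)-\frac m2\log x$. Expand each summand as $\frac r2\psi(x)-\frac{r^2}{8}\psi'(x)+O(r^3/x^2)$. Then use $\psi(x)=\log x-\frac1{2x}+O(x^{-2})$ and $\psi'(x)=\frac1x+O(x^{-2})$. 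This short digamma computation is the ingredient you need to supply.

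\textbf{The shortcut does not close the gap.} Write $L_n=\dd P_{n,d}/\dd Q_n$ and $\log L_n=S_n(Y)/\sqrt{6\lambda}+b_n+o_{Q_n}(1)$ with $b_n$ deterministic. Together with $\E_{Q_n}L_n=1$, Fatou's lemma gives only $\limsup b_n\le-\frac1{12\lambda}$. The reverse inequality is equivalent, by Le Cam's first lemma, to contiguity $P_{n,d}\triangleleft Q_n$, i.e.\ uniform integrability of $L_n$ under $Q_n$. That is a separate fact you would have to prove.

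The second-moment route you suggest for it fails. One has $\E_{Q_n}L_n^2=e^{2K_{n,d}}(d/2\pi)^{m/2}\int_{I+H\succ0}\det(I+H)^{d-n-1}e^{d\tr(H^2)/4}\dd H$. Because $\tr H=0$, the exponent is about $d\sum_i g(h_i)$ over the eigenvalues $h_i$ of $H$, where $g(h)=\log(1+h)-h+h^2/4$. Since $g(2)>0$, correlation matrices with one eigenvalue near $3$ force $\E_{Q_n}L_n^2\ge e^{cn^3}$.

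A truncated argument could work: prove $P_{n,d}(\cE_n)\to1$, then bound exponential moments of $S_n$ on $\cE_n$ using Lipschitz concentration of a truncated spectral statistic. But it is more delicate than the direct expansion of $K_{n,d}$. Your other option, Fatou plus a Stirling lower bound, is just that direct computation again.
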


\noindent
We prove Theorem~\ref{thm:continuous-lan} and show that Proposition~\ref{prop:parent-asymptotics} follows from it in Appendix~\ref{sec:continuous}.

The next step is to split $S_n(Y)/\sqrt{6\lambda}$ into a part
$M_n$ that is measurable with respect to $A$ and a residual $R_n$ as in Theorem~\ref{thm:coarsening}. The split we use is
\begin{equation}
\frac{S_n(Y)}{\sqrt{6\lambda}}
=\underbrace{\frac{\rho_p^3 S_n^{\mathsf{G}}(A)}{\sqrt{6\lambda}}}_{=:M_n}
+\underbrace{\frac{S_n(Y)-\rho_p^3 S_n^{\mathsf{G}}(A)}{\sqrt{6\lambda}}}_{=:R_n}. \label{eq:split-M_n-R_n}
\end{equation}
Then the second hypothesis of Theorem~\ref{thm:coarsening} is precisely the asymptotic normality of the signed triangle count, which is a special case of Theorem~1 of \citet{janson1994orthogonal}.

\begin{proposition}[\citet{janson1994orthogonal}]
\label{prop:signed-triangle-clt}
Under $A \sim Q_n^{\mathsf{G}}$,
\begin{equation*}
S_n^{\mathsf{G}}(A)
\convd
\cN(0,1).
\end{equation*}
\end{proposition}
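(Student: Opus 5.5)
We prove Proposition~\ref{prop:signed-triangle-clt} by the method of moments, or more economically through the fourth-moment theorem for degenerate $U$-statistics. Under $Q_n^{\mathsf{G}}$ the standardized edges $\bar A_{ij}$, $\{i,j\}\in\binom{[n]}2$, are i.i.d.\ with mean zero, unit variance, and moments of all orders bounded uniformly in $n$, since $p$ is fixed. The statistic $S_n^{\mathsf{G}}(A)$ is therefore a homogeneous multilinear polynomial of degree three in independent centered variables; in the language of \citet{janson1994orthogonal} it lies in a fixed-order chaos of the orthogonal decomposition.

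\textbf{Step 1: mean and variance.} Each triangle term $\bar A_{ij}\bar A_{ik}\bar A_{jk}$ is a product of three distinct independent centered variables, so it has mean zero. Two distinct triangles share at most one edge, so the product of two distinct triangle terms always contains some edge to the first power. Hence distinct triangle terms are uncorrelated, and $\E[S_n^{\mathsf{G}}(A)^2]=\binom n3^{-1}\binom n3=1$.

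\textbf{Step 2: limit distribution.} We apply the de~Jong / Nourdin--Peccati--Reinert criterion for degenerate multilinear forms with bounded moments. It suffices to check two things.
\begin{enumerate}
\item The maximal influence vanishes. The influence of edge $\{i,j\}$ is $\binom n3^{-1}(n-2)=O(n^{-2})\to0$.
\item The fourth moment converges to $3$.
\end{enumerate}

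To compute $\E[S^4]$, expand it as $\binom n3^{-2}$ times a sum over ordered quadruples of triangles $(T_1,T_2,T_3,T_4)$. A quadruple contributes only if every edge appears at least twice in the multiset union of their edges.
\begin{itemize}
\item Quadruples that pair up into two identical pairs contribute $3\binom n3^2(1+o(1))\cdot 1$. Pairs counted with coincidences such as $T_1=T_2=T_3=T_4$ are $O(n^3)$ terms, each bounded by $\E\bar A^4$ squared-type constants, which is negligible.
\item Every other contributing configuration has each edge covered at least twice without the quadruple being two identical pairs. For example, four triangles forming the faces of a $K_4$ span only $4$ vertices, giving $O(n^4)$ configurations. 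In general the union of the triangles spans at most $5$ vertices, so these configurations number $O(n^5)=o(n^6)$.
\end{itemize}
The configuration counting in the second case is the main obstacle. I would bound the vertex count by noting that each edge must be shared by two of the triangles, so the triangles form a connected structure in which every triangle shares an edge with another. Each triangle beyond the first then adds at most one new vertex, and any configuration reaching $6$ vertices forces two disjoint identical pairs. With the moments bounded, this gives $\E[S^4]\to3$ and hence $S_n^{\mathsf{G}}(A)\convd\cN(0,1)$.

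Alternatively, I could cite Theorem~1 of \citet{janson1994orthogonal} directly, applied to the subgraph count of the triangle with weights $\bar A$, which is the route the paper indicates.
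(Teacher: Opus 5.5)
Your proof is correct, but it takes a different route from the paper. The paper treats the statement as a special case of Theorem~1 of \citet{janson1994orthogonal}. It is also the case $m=1$, $Z_{ij}^{(n)}=\bar A_{ij}$, $\Sigma=1$ of the paper's Lemma~\ref{lem:triangle-vector-clt}, which proves it by the full method of moments. There, every nonvanishing term in the $k$-th moment has union graph $H$ with $2v\le 2e\le 3k$. Equality forces $H$ to be a vertex-disjoint union of triangles, each used exactly twice, so the limiting moments are exactly Wick's pairing formula.

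You instead invoke de~Jong's fourth-moment theorem for degenerate multilinear forms. This means you only need the variance, the influence bound, and the $k=4$ instance of that counting. That is economical, but it imports a deeper external theorem. The influence condition you checked is genuinely needed there, because the $\bar A_{ij}$ are not Gaussian (for $p=1/2$ they are Rademacher). The paper's all-moments argument is elementary and self-contained. It also extends verbatim to vectors of cubic statistics, which the two-replica argument in Proposition~\ref{prop:quenched-residual} requires; your route would need a multivariate version of de~Jong's theorem for that.

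One point in your fourth-moment counting is imprecise. The edge-sharing structure among $T_1,\dots,T_4$ need not be connected: two identical pairs form two components. In the connected case, ``each triangle adds at most one new vertex'' only gives $v\le 6$. You need the additional remark that if the last triangle added a new vertex, its two new edges would be covered only once, which kills the term. The degree bound $2v\le 2e\le 12$, with equality only for vertex-disjoint identical pairs, gives your claim $v\le 5$ directly.
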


Finally, the last hypothesis of Theorem~\ref{thm:coarsening} is verified using the following proposition, proved in Appendix~\ref{sec:residual}.

\begin{proposition}
\label{prop:quenched-residual}
In $Q_n^{\mathsf{G}}$-probability,
\begin{equation*}
d_{\BL}\inparen{
\cL_{Q_n}\inparen{S_n(Y)-\rho_p^3 S_n^{\mathsf{G}}(A)\,\middle|\, A}, \,
\cN\!\inparen{0,1-\rho_p^6}}
\longrightarrow0.
\end{equation*}
\end{proposition}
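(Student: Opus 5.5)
Under $Q_n$, conditionally on $A$, the entries $Y_{ij}$ are independent. Each $Y_{ij}$ is a standard Gaussian truncated to $[z_p,\infty)$ if $A_{ij}=1$ and to $(-\infty,z_p)$ if $A_{ij}=0$. Write $Y_{ij}=m(A_{ij})+W_{ij}$, where $m(A_{ij})=\E[Y_{ij}\mid A_{ij}]$. Since $\E[Y_{ij}\mid A_{ij}]$ is an affine function of $A_{ij}$ with slope given by the regression coefficient, and $\rho_p=\operatorname{Corr}(Y_{ij},A_{ij})$ with $\Var(Y_{ij})=1$, we get $m(A_{ij})=\rho_p\bar A_{ij}$. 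Hence
\[
Y_{ij}=\rho_p\bar A_{ij}+W_{ij},
\]
where, given $A$, the $W_{ij}$ are independent, centered, with conditional variance $v(A_{ij})$ and uniformly bounded conditional moments of all orders. Unconditionally, $\E[v(A_{ij})]=1-\rho_p^2$.

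Expanding the product $Y_{ij}Y_{ik}Y_{jk}$ gives
\[
S_n(Y)-\rho_p^3S_n^{\mathsf G}(A)=T_1+T_2+T_3,
\]
where $T_k$ collects the terms with exactly $k$ factors of $W$. For example,
\[
T_1=\binom n3^{-1/2}\sum_{i<j<k}\rho_p^2\bigl(\bar A_{ij}\bar A_{ik}W_{jk}+\text{two similar terms}\bigr).
\]
Conditionally on $A$ each $T_k$ is a centered multilinear form in independent variables, and the $T_k$ are conditionally uncorrelated with each other. The conditional variances are computed directly.

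$\Var(T_1\mid A)=\rho_p^4\binom n3^{-1}\sum_{j<k}v(A_{jk})\bigl(\sum_{i\ne j,k}\bar A_{ij}\bar A_{ik}\bigr)^2$. This concentrates around its $Q_n^{\mathsf G}$-mean $3\rho_p^4(1-\rho_p^2)$ by a second moment computation in $A$.

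In the same way, $\Var(T_2\mid A)\to3\rho_p^2(1-\rho_p^2)^2$ and $\Var(T_3\mid A)\to(1-\rho_p^2)^3$ in probability. The key fact is that the conditional variances $v(A_{ij})$ average out; the off-diagonal fluctuations are of lower order because the sums run over $\asymp n^3$ triangles against the normalization $\binom n3$.

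Adding the three limits gives
\[
3\rho_p^4(1-\rho_p^2)+3\rho_p^2(1-\rho_p^2)^2+(1-\rho_p^2)^3=\bigl(\rho_p^2+(1-\rho_p^2)\bigr)^3-\rho_p^6=1-\rho_p^6,
\]
as required.

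It remains to prove a quenched CLT for $T_1+T_2+T_3$ given $A$. I would apply the fourth-moment / de Jong-type CLT for degenerate multilinear forms in independent non-Gaussian variables (as in \citet{janson1994orthogonal} or de Jong's theorem for generalized quadratic forms). Alternatively, I would use a martingale CLT that reveals the edges in lexicographic order, conditionally on $A$. The conditions to verify are the following, each holding with $Q_n^{\mathsf G}$-probability tending to one:
\begin{enumerate}
\item the conditional fourth cumulant of the normalized sum vanishes;
\item the maximal influence $\max_{e}\sum_{\text{terms}\ni e}(\text{coef})^2$ tends to zero.
\end{enumerate}
The influence condition is easy. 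Each edge lies in $n-2$ triangles, and the coefficients are bounded by polynomials in the $\bar A$'s, so the maximal influence is $O(n\cdot n^{-3})$ after normalization.

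The main obstacle is the fourth-moment condition for the linear part $T_1$, whose coefficients $c_{jk}=\sum_i\bar A_{ij}\bar A_{ik}$ are random through $A$. For $T_1$ as a weighted sum of independent variables, Lyapunov's condition reduces to showing $\sum_{j<k}c_{jk}^4=o(n^6)$. Since $c_{jk}$ is a sum of $n$ centered terms, $c_{jk}=O_P(\sqrt n)$, so this sum is $O_P(n^4)$, which suffices; I would justify it by bounding its expectation. For $T_2$ and $T_3$ I would control the contraction norms of the coefficient tensors via expected moments over $A$. Applying Markov's inequality then gives the in-probability statement. Combining the variance limit with the quenched CLT yields the $d_{\BL}$ convergence to $\cN(0,1-\rho_p^6)$.
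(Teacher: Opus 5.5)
Your route differs from the paper's, and its algebra is right. The identity $\E_{Q_n}[Y_{ij}\mid A_{ij}]=\rho_p\bar A_{ij}$ holds. So do the splitting into $T_1+T_2+T_3$, their conditional orthogonality, the three conditional variances and their concentration (each is a sum of $\asymp n^3$ bounded local terms with $O(n^4)$ dependent pairs), and $3\rho_p^4(1-\rho_p^2)+3\rho_p^2(1-\rho_p^2)^2+(1-\rho_p^2)^3=1-\rho_p^6$.

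\textbf{The gap is the CLT criterion.} ``Fourth cumulant of the normalized sum $\to0$ plus maximal influence $\to0$'' is de Jong's criterion for a \emph{homogeneous} multilinear form. For a mixed-degree sum it is not sufficient, because cross-cumulants between different degrees can be negative. With $H_3(z)=z^3-3z$, the variable $Z+cH_3(Z)$ has fourth cumulant $24c+O(c^2)$. So for small $c<0$, adding an independent second-chaos term gives a non-Gaussian variable with vanishing fourth cumulant.

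\textbf{What is needed to repair it.}
\begin{enumerate}
\item Check the homogeneous conditions for $T_1$, $T_2$, $T_3$ separately. Your last paragraph partly does this, and these checks are not hard here.
\item Invoke a theorem giving joint convergence of the conditionally uncorrelated but dependent vector $(T_1,T_2,T_3)$ to independent Gaussians. Options are the multivariate de Jong theorem of D\"obler and Peccati, or an invariance principle (Mossel--O'Donnell--Oleszkiewicz, Nourdin--Peccati--Reinert) applied to $W_e/\sqrt{v(A_e)}$ followed by the Peccati--Tudor theorem. The martingale CLT also works on the sum directly, but then the random bracket, which involves earlier $W$'s, must be shown to concentrate.
\item Use a quantitative version, or a deterministic-sequence argument, to turn ``conditions hold with probability tending to one'' into $d_{\BL}\to0$ in $Q_n^{\mathsf G}$-probability.
\end{enumerate}
Two minor points. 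The influence of $e$ in $T_1$ involves the aggregated coefficient $c_e$, so it is only $O(1/n)$ deterministically rather than $O(n^{-2})$; this still suffices. And Lyapunov for $T_1$ is the easy part, not the main obstacle.

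\textbf{How the paper avoids this.} It does no conditional analysis. After noting $\E_{Q_n}[S_n(Y)\mid A]=\rho_p^3S_n^{\mathsf G}(A)$, it draws $Y^{(1)},Y^{(2)}$ conditionally independent given $A$. The triples $(\bar A_{ij},Y^{(1)}_{ij},Y^{(2)}_{ij})$ are then i.i.d.\ across edges with covariance entries $1,\rho_p,\rho_p^2$. It applies an unconditional method-of-moments CLT for triangle sums of i.i.d.\ vectors (Lemma~\ref{lem:triangle-vector-clt}), whose limit has covariance $(\Sigma_{rs}^3)$. The two residuals therefore have limiting variance $1-\rho_p^6$ and covariance $\rho_p^6-2\rho_p^6+\rho_p^6=0$. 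Hoeffding's two-replica lemma (Lemma~\ref{lem:two-replica-criterion}) converts this annealed statement into the quenched one.

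That route needs only the edgewise covariance and uniform moment bounds, with no random-coefficient estimates or multivariate de Jong theorem. Yours, once completed, gives more: an explicit decomposition showing how $1-\rho_p^6$ splits across degrees one, two and three, and potentially a rate via quantitative de Jong bounds.
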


\noindent
With these auxiliary results in position, the main theorems follow by application of Theorem~\ref{thm:coarsening}.

\medskip

\begin{proof}[Proof of Theorem \ref{thm:RGG_ER_LLR_strongerSignedTriangle_asymptotics}]
Apply Theorem~\(\ref{thm:coarsening}\) by taking
\(P_{n,d}\) and \(Q_n\) as the parent distributions, with
\(\mathcal G_n=\sigma(A)\), $M_n$ and $R_n$ defined in \eqref{eq:split-M_n-R_n}, 
\(I=\frac{1}{6\lambda}\), and
\(J=\frac{\rho_p^6}{6\lambda}\). 
Lemma~\(\ref{lem:pushforward}\) identifies the conditional likelihood ratio with that between the random graph models $\widetilde P_{n,d}^{\mathsf{G}}$ and $Q_n^{\mathsf{G}}$. 
Theorem~\(\ref{thm:continuous-lan}\),
Proposition~\(\ref{prop:signed-triangle-clt}\), and
Proposition~\(\ref{prop:quenched-residual}\) verify the three required
hypotheses, in that order. 
This yields
\[
\log \widetilde L_n(A)=\frac{\rho_p^3 S_n^{\mathsf{G}}(A)}{\sqrt{6\lambda}}-\frac J2+o_{Q_n^{\mathsf{G}}}(1),
\]
where $\widetilde L_n := \dd \widetilde P_{n,d}^{\mathsf{G}} / \dd Q_n^{\mathsf{G}}$. 
By Proposition~\ref{prop:signed-triangle-clt}, $\widetilde L_n$ converges in distribution to a log-normal random variable, so $1/\widetilde L_n=O_{Q_n^{\mathsf{G}}}(1)$. 
On the other hand, Lemma~\ref{lem:rgg-threshold-correction} gives, for $L_n := \dd P_{n,d}^{\mathsf{G}}/\dd Q_n^{\mathsf{G}}$,
\[
\E_{Q_n^{\mathsf{G}}}|L_n-\widetilde L_n|=2\TV(P_{n,d}^{\mathsf{G}},\widetilde P_{n,d}^{\mathsf{G}})=o(1), 
\]
so by Markov's inequality, $L_n-\widetilde L_n=o_{Q_n^{\mathsf{G}}}(1)$. Consequently,
\[
\frac{L_n}{\widetilde L_n}-1
=\frac{L_n-\widetilde L_n}{\widetilde L_n}
=o_{Q_n^{\mathsf{G}}}(1).
\]
Continuity of the logarithm at $1$ then gives
\[
\log L_n-\log\widetilde L_n
=\log\frac{L_n}{\widetilde L_n}
=o_{Q_n^{\mathsf{G}}}(1),
\]
proving Theorem \ref{thm:RGG_ER_LLR_strongerSignedTriangle_asymptotics}.
\end{proof}

We are now ready to prove the main results for random graphs using Theorem \ref{thm:RGG_ER_LLR_strongerSignedTriangle_asymptotics}.

\medskip

\begin{proof}[Proof of Theorem \ref{thm:RGG_ER_LLR_asymptotics}]
By Proposition~\ref{prop:signed-triangle-clt} and Theorem \ref{thm:RGG_ER_LLR_strongerSignedTriangle_asymptotics}, we immediately obtain \eqref{eq:RGG_ER_LLR_asymptotics} for $A \sim Q_n^{\mathsf{G}}$.
Le Cam's third lemma (Lemma~\ref{lem:le-cam-third}) yields the corresponding statement in \eqref{eq:RGG_ER_LLR_asymptotics} for $A \sim P_{n,d}^{\mathsf{G}}$ by considering the limiting joint distribution of $\inparen{\log \frac{\ud P_{n,d}^{\mathsf{G}}}{\ud Q_n^{\mathsf{G}}}, \log \frac{\ud P_{n,d}^{\mathsf{G}}}{\ud Q_n^{\mathsf{G}}}}$ under $Q_n^{\mathsf{G}}$. 

The TV limit \eqref{eq:tv-main} follows from the Neyman--Pearson lemma which asserts that the optimal test between two simple hypotheses (i.e., the minimizer in \eqref{eq:relation-TV-testing}) is achieved by thresholding the log-likelihood ratio at zero. 
\end{proof}

\begin{proof}[Proof of Theorem \ref{thm:RGG_ER_optimal_test}]
Proposition~\ref{prop:signed-triangle-clt} gives
$
S_n^{\mathsf{G}}(A)
\convd\cN(0,1)
$
under 
$Q_n^{\mathsf{G}}$.
Hence
\[
Q_n^{\mathsf{G}}\insquare{\varphi_n=1}
\longrightarrow
\Phi\inparen{-\frac{\rho_p^3}{2\sqrt{6\lambda}}}.
\]
In addition, Theorem~\ref{thm:RGG_ER_LLR_strongerSignedTriangle_asymptotics}
gives, under \(Q_n^{\mathsf{G}}\),
\[
\inparen{
S_n^{\mathsf{G}}(A),
\log\frac{\dd P_{n,d}^{\mathsf{G}}}{\dd Q_n^{\mathsf{G}}}}
\convd
\inparen{
Z,
\frac{\rho_p^3}{\sqrt{6\lambda}}Z
-\frac{\rho_p^6}{12\lambda}},
\]
where $Z\sim\cN(0,1)$. 
It then follows by an application of Le Cam's third lemma
(Lemma~\ref{lem:le-cam-third}) that under \(P_{n,d}^{\mathsf{G}}\),
\begin{equation*}
S_n^{\mathsf{G}}(A)
\convd
\cN\inparen{\frac{\rho_p^3}{\sqrt{6\lambda}},1}.
\end{equation*}
As a result, 
\[
P_{n,d}^{\mathsf{G}}\insquare{\varphi_n=0}
\longrightarrow
\Phi\inparen{-\frac{\rho_p^3}{2\sqrt{6\lambda}}}.
\]
Adding the two errors proves the theorem.
\end{proof}

\subsection{Proofs for rank statistics}
\label{subsec:rank-assembly}

We apply Theorem~\ref{thm:coarsening} to prove
Theorems~\ref{thm:rank-lan} and~\ref{thm:rank-optimal-test}.
Recall the cubic rank statistic \(S_n^{\mathsf{R}}(\Pi)\) defined in
\eqref{eq:cubic-rank-statistic}. As in the graph case, our next result gives a stronger
log-likelihood ratio expansion in the critical regime
\eqref{eq:RGG_critical_scaling}, identifying \(S_n^{\mathsf{R}}(\Pi)\)
as an asymptotically optimal statistic.

\begin{theorem}
\label{thm:rank-likelihood-expansion}
Suppose \eqref{eq:RGG_critical_scaling} holds. Under
\(Q_n^{\mathsf{R}}\),
\begin{equation*}
\log
\frac{\dd P_{n,d}^{\mathsf{R}}}
{\dd Q_n^{\mathsf{R}}}(\Pi)
=
\frac{S_n^{\mathsf{R}}(\Pi)}{\sqrt{6\lambda}}
-\frac{1}{12\lambda}
+o_{Q_n^{\mathsf{R}}}(1).
\end{equation*}
\end{theorem}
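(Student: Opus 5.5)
We apply Theorem~\ref{thm:coarsening} in the same way as for random geometric graphs. The parent distributions are \(P_{n,d}\) and \(Q_n\), and we take \(\mathcal G_n=\sigma(\Pi)\). First, exactly as in Lemma~\ref{lem:pushforward}, for \(d\ge n+1\) we have
\[
\frac{\dd P_{n,d}^{\mathsf{R}}}{\dd Q_n^{\mathsf{R}}}(\Pi)=\E_{Q_n}\!\left[\frac{\dd P_{n,d}}{\dd Q_n}(Y)\,\middle|\,\Pi\right].
\]
The proof is the same tower-property computation: \(\Pi\) is a measurable function of \(Y\), and \(P_{n,d}\ll Q_n\) by Lemma~\ref{lem:exact-cont-likelihood}. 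Theorem~\ref{thm:continuous-lan} then supplies hypothesis~1 with \(I=1/(6\lambda)\). We split
\[
\frac{S_n(Y)}{\sqrt{6\lambda}}=\underbrace{\frac{S_n^{\mathsf{R}}(\Pi)}{\sqrt{6\lambda}}}_{M_n}+\underbrace{\frac{S_n(Y)-S_n^{\mathsf{R}}(\Pi)}{\sqrt{6\lambda}}}_{R_n}
\]
and aim to take \(J=I\). Hypothesis~3 then requires \(R_n\to0\) conditionally, and this follows if \(\E_{Q_n}[R_n^2]\to0\): in that case \(\E_{Q_n}[\min(1,|R_n|)\mid\Pi]\to0\) in \(L^1\), hence in probability. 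The conclusion of Theorem~\ref{thm:coarsening} is then exactly the claimed expansion with constant \(-J/2=-1/(12\lambda)\).

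It remains to prove two facts under \(Q_n\), where \(\Pi\) is a uniform bijection independent of the order statistics \(\xi\). The first is \(\E[(S_n(Y)-S_n^{\mathsf{R}}(\Pi))^2]\to0\). The second is \(S_n^{\mathsf{R}}(\Pi)\convd\cN(0,1)\), which then follows from \(S_n(Y)\convd\cN(0,1)\) (Proposition~\ref{prop:signed-triangle-clt}, or Janson's theorem for Gaussian entries) together with the first fact. Write \(N=\binom n2\) and \(D_e=Y_e-g_{\Pi(e)}=\xi_{\Pi(e)}-g_{\Pi(e)}\). Expanding the product telescopically,
\[
Y_{ij}Y_{ik}Y_{jk}-g g g=D_{ij}Y_{ik}Y_{jk}+g_{\Pi(i,j)}D_{ik}Y_{jk}+g_{\Pi(i,j)}g_{\Pi(i,k)}D_{jk},
\]
so it suffices to bound the second moment of each of the three resulting sums over triangles. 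Classical quantile approximation gives \(\E\sum_r(\xi_r-g_r)^2=O(\log N)\); in particular, the averaged squared error \(N^{-1}\sum_r\E(\xi_r-g_r)^2\) is \(O(\log N/N)\).

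The main obstacle is controlling the cross terms between distinct triangles. Exchangeability makes the entries \((D_e,Y_e)\) only weakly dependent, not independent, so many pairs of triangles that share no edge have nonzero covariances. I would handle this as follows.

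\textbf{Step 1: condition on \(\xi\).} Given \(\xi\), the vector \((\xi_{\Pi(e)})_e\) is a uniformly random permutation of the fixed multiset \(\{\xi_r\}\).

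\textbf{Step 2: center.} Replace each factor by its centered version. The empirical means of \(\xi\) and of \(g\) are \(O_P(N^{-1/2})\) and \(0\) respectively, and the centering corrections contribute negligibly after the \(\binom n3^{-1/2}\) normalization, because triangles number \(n^3\) while the mean shifts are of order \(n^{-1}\).

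\textbf{Step 3: compute the variance.} For a random permutation of centered values, mixed moments of products over three distinct edges factorize up to \(O(1/N)\) relative corrections, as in the standard computation for permutation (U-statistic) variances. Thus the variance of \(\sum_{\triangle}D\,Y\,Y\) equals
\[
\binom n3\cdot\frac1N\sum_r(\xi_r-g_r)^2\cdot O(1)
\]
from the diagonal terms, plus off-diagonal terms. Pairs of triangles sharing one edge contribute zero in the leading order because of centering, and edge-disjoint pairs contribute at most \(O(n^6\cdot N^{-3})\times\) bounded moments, up to the same small factor.

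\textbf{Step 4: conclude.} Dividing by \(\binom n3\) gives \(O(\log N/N)\to0\).

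I expect Step~3, the careful bookkeeping of permutation moment corrections, to be the technical core. It could alternatively be handled through a Hoeffding-type combinatorial projection.

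Finally, since \(M_n\) is \(\Pi\)-measurable with \(J=I\), Theorem~\ref{thm:coarsening} yields
\[
\log\frac{\dd P_{n,d}^{\mathsf{R}}}{\dd Q_n^{\mathsf{R}}}(\Pi)=\frac{S_n^{\mathsf{R}}(\Pi)}{\sqrt{6\lambda}}-\frac1{12\lambda}+o_{Q_n^{\mathsf{R}}}(1),
\]
and no threshold correction is needed, unlike the graph case.
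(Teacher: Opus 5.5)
Your reduction is the paper's: the same pushforward identity for $\Pi$, Theorem~\ref{thm:coarsening} with $\mathcal G_n=\sigma(\Pi)$ and $I=J=1/(6\lambda)$, the same split into $M_n$ and $R_n$, and hypothesis~3 reduced to $\E_{Q_n}[R_n^2]\to0$. One citation slip: for $S_n(Y)\convd\cN(0,1)$ cite Lemma~\ref{lem:cubic-statistic-clt}, or Lemma~\ref{lem:triangle-vector-clt} with $m=1$. Proposition~\ref{prop:signed-triangle-clt} is about the Bernoulli statistic $S_n^{\mathsf G}(A)$.

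The genuine difference is in how you prove $\E_{Q_n}[(S_n(Y)-S_n^{\mathsf R}(\Pi))^2]\to0$ (Lemma~\ref{lem:rank-statistic-reconstruction}). The paper never conditions on the order statistics. It handles the diagonal through $\E|Y_1-G_1|^6\to0$, from van der Vaart's Example~13.9 with $h(x)=x^3$. It uses the exact orthogonality $\E[Y_IY_J]=0$ for $I\neq J$. It bounds $\E[Y_IG_J]$ and $\E[G_IG_J]$ by combining exchangeability with the exact identities $\sum_J G_J=0$ and $\sum_{J\ni 1}G_J=G_1^3-\tfrac{p_2}{2}G_1$, which follow from $p_1=p_3=0$. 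This yields $O(m^{-1})$ and $O(m^{-2})$ for triples sharing one or no edge, with no moment bookkeeping.

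Your route is to condition on $\xi$, treat $(\xi_{\Pi(e)})_e$ as sampling without replacement, telescope, and expand sums over distinct indices by set partitions. It also works. After centering, singleton blocks vanish, so an edge-disjoint pair of triangles contributes $O(N^{-3})$ and a pair sharing one edge contributes $O(N^{-2})$. The whole off-diagonal part is therefore $O(1)$ against $\binom n3$. The diagonal only needs $N^{-1}\E\sum_r(\xi_r-g_r)^2\to0$; your $O(\log N)$ bound is more than enough, and the paper's citation supplies the qualitative version. Your approach gives an explicit rate and does not need $p_3=0$. The paper's is shorter and uses only exchangeability plus a qualitative quantile approximation.

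One justification in your Step~2 is wrong as stated. Counting $n^3$ triangles times a shift of order $n^{-1}$, against the factor $\binom n3^{-1/2}$, gives order $n^{1/2}$, not $o(1)$. The correction is still negligible, but for a different reason. Write $\tilde Y_e=Y_e-\bar\xi$ and expand each triangle product in powers of $\bar\xi$:
\begin{itemize}
\item The $\bar\xi^2$ term vanishes identically, because each edge lies in $n-2$ triangles and $\sum_e\tilde Y_e=0$.
\item The $\bar\xi^3$ term is $\binom n3^{1/2}\bar\xi^3=O_{Q_n}(n^{-3/2})$.
\item The $\bar\xi$ term is $\bar\xi$ times the wedge sum $\sum\tilde Y_e\tilde Y_f=\sum Y_eY_f-(n-2)N\bar\xi^2$. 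Distinct wedges of $Y$ are orthogonal under $Q_n$, so this sum is $O_{Q_n}(n^{3/2})$ and the term is $O_{Q_n}(n^{-1})$.
\end{itemize}
Alternatively, skip centering and keep the singleton blocks $\sum_r\xi_r=\sum_eY_e=O_{Q_n}(\sqrt N)$ in your partition expansion; the same orders result.
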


A critical ingredient in the proof is the following lemma, which shows that the cubic rank statistic $S_n^{\mathsf{R}}(\Pi)$ can be used to reconstruct the cubic polynomial $S_n(Y)$ in $L^2$. It is relatively straightfoward to show that any individual $Y_{ij}$ can be reconstructed from its rank $\Pi(i,j)$ accurately by approximation by \(g_{\Pi(i,j)}\), the Gaussian quantile
defined in \eqref{eq:rank-gaussian-quantiles}. The main technical difficulty is to  extend this approximation to \(S_n(Y)\),
which sums the products \(Y_{ij}Y_{ik}Y_{jk}\) over all such triangles, and show that the errors in the triangle products do not accumulate across the sum.

\begin{lemma}
\label{lem:rank-statistic-reconstruction}
Under $Q_n$,
\begin{equation}\label{eq:rank-statistic-reconstruction}
\E_{Q_n}\left[
\inparen{S_n(Y)-S_n^{\mathsf{R}}(\Pi)}^2
\right]
\longrightarrow0.
\end{equation}
\end{lemma}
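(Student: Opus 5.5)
Under $Q_n$ the entries $(Y_{ij})_{i<j}$ are i.i.d.\ standard Gaussians, so the rank map $\Pi$ is uniform and independent of the order statistics $\xi_1<\cdots<\xi_N$, where $N=\binom n2$. The approach is to write $Y_{ij}=g_{\Pi(i,j)}+\Delta_{ij}$ with $\Delta_{ij}=\xi_{\Pi(i,j)}-g_{\Pi(i,j)}$. Then $Y_{ij}Y_{ik}Y_{jk}-g g g$ expands into seven terms, each containing at least one factor $\Delta$. Write $\eta_r=\xi_r-g_r$, so that $\Delta_{ij}=\eta_{\Pi(i,j)}$. The vector $\eta=(\eta_r)_{r\le N}$ depends only on the order statistics and is therefore independent of $\Pi$.

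\textbf{Step 1: bound the order-statistic errors.} The first step is the classical bound $\frac1N\sum_r\E\eta_r^2\to0$, i.e.\ $\E\sum_r(\xi_r-g_r)^2=o(N)$. It follows because $\frac1N\sum_r\E\xi_r^2=1$, $\frac1N\sum_r g_r^2\to1$ (a Riemann sum for $\int_0^1\Phi^{-1}(u)^2\,du$), and $\frac1N\sum_r g_r\E\xi_r\to1$. The last limit holds since $\E\xi_r$ is uniformly close to $g_r$ on central ranks, and the tails are controlled by uniform integrability. Since $\sum_r(\xi_r-g_r)^2=\sum_r\xi_r^2-2\sum_r g_r\xi_r+\sum_r g_r^2$, these three limits give the bound.

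\textbf{Step 2: exploit exchangeability to kill cross terms.} Conditional on $\eta$ (and on the multiset of the $g$'s), the array $(h_{ij})$ with $h_{ij}$ a product-compatible assignment is obtained by placing the $N$ pairs $(g_r,\eta_r)$ uniformly at random on the edges of $K_n$. Fix a term, say $T=\binom n3^{-1/2}\sum_{i<j<k}a_{ij}b_{ik}c_{jk}$, where each of $a,b,c$ is either $g_\Pi$ or $\Delta$ and at least one is $\Delta$. Compute $\E T^2$ by expanding into pairs of triangles $(t,t')$, grouped by the number of shared edges: $0$, $1$, or $3$ (two distinct triangles cannot share exactly two edges).

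When triangles share no edge, sampling without replacement gives a covariance that is small; I would center by writing $g_r$ and $\eta_r$ relative to their empirical means. The sum $\sum_r g_r$ is exactly $0$ by symmetry, but $\bar\eta=\frac1N\sum_r\eta_r$ need not vanish, so I would handle its mean part separately via $\E(\sum_r\eta_r)^2=\E(\sum_r\xi_r)^2=N$. The resulting cross terms are of order $N^{-1}$ times products of empirical second moments. With $n^6$ such pairs against the normalization $n^3$, this would be too large, so the cancellation must be made exact. For this I plan to use the standard fact that for a uniform permutation, products of disjoint centered entries have expectation $O(N^{-|\text{set}|/2})$-type corrections. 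Concretely, for centered values the joint moment $\E[v_{\pi(1)}\cdots v_{\pi(6)}]$ over six distinct positions is $O(N^{-3})\cdot(\sum_r v_r^2)^3/N^3$ after combinatorial cancellation, which gives an $O(1)\cdot(\text{average } \eta^2)$ contribution.

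Pairs sharing exactly one edge number $O(n^4)$, and pairs sharing three edges number $O(n^3)$. Each contributes at most a product of empirical moments in which the $\Delta$ factor appears squared, so the total is $O\bigl(\frac1N\sum_r\eta_r^2\cdot(\frac1N\sum_r g_r^4+\frac1N\sum_r\eta_r^4)\bigr)$ up to normalization. For one-edge sharing, the $n^4/n^3$ ratio must be cancelled: the unshared edges contribute centered factors whose product of expectations vanishes up to $O(1/N)$, which suffices.

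\textbf{Main obstacle.} The hard part is the disjoint-triangle and one-shared-edge contributions, where sampling without replacement from centered but non-zero-sum populations creates weak dependence that must not accumulate over $n^6$ or $n^5$ pairs. My first attempt would be to avoid permutation combinatorics entirely. Conditional on $\Pi$, write $\xi_r=\Phi^{-1}(U_{(r)})$ with uniform order statistics, and compare instead with $\tilde Y_{ij}=\Phi^{-1}(U_{ij})$. Under $Q_n$ the matrix $Y$ itself is i.i.d., so $S_n(Y)$ is a degenerate U-statistic, and $S_n^{\mathsf R}(\Pi)=S_n(h(Y))$ where $h(Y)_{ij}=g_{\Pi(i,j)}$ is a function of the empirical CDF $F_N$: namely $h(y)=\Phi^{-1}(\tfrac{N}{N+1}F_N(y))$. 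Replacing $F_N$ by $\Phi$ gives exactly $Y$. The error $\Phi^{-1}(\tfrac N{N+1}F_N(Y_{ij}))-Y_{ij}$ is approximately $\frac{F_N(Y_{ij})-\Phi(Y_{ij})}{\phi(Y_{ij})}$. This is a sum over other edges of centered kernels, so the difference becomes a degenerate $U$-statistic of order four in the edge variables. Its second moment can be computed via Hoeffding decomposition, and it is $O(1/N)\cdot n^3\cdot n^{-3}\to0$, with the tail ranks where $\phi$ is tiny handled by truncation at $|y|\le\sqrt{(2-\epsilon)\log N}$.
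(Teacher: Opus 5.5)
Your Step~1 together with the permutation-moment version of Step~2 gives a valid proof. The key off-diagonal estimate, however, is obtained differently from the paper.

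\textbf{Common structure and the paper's key step.} Both arguments expand the square over pairs of triangles. Both treat the diagonal with the H\'ajek-type approximation of single scores, and both kill the off-diagonal terms using exchangeability of $(Y_r,G_r)$ and the exact cancellation $\sum_r g_r=0$. The paper, however, never writes $Y_{ij}=g_{\Pi(i,j)}+\Delta_{ij}$. It uses $\E[Y_IY_J]=0$ for $I\neq J$ directly. It then bounds $\E[Y_IG_J]$ and $\E[G_IG_J]$ for arbitrary triples of edge coordinates through two identities, $\sum_J G_J=0$ and $\sum_{J\ni1}G_J=G_1^3-\tfrac{p_2}{2}G_1$, which follow from $\sum_r g_r=\sum_r g_r^3=0$, with $p_2=\sum_r g_r^2$. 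Exchangeability turns these into linear relations among the correlations $a_k=\E[Y_IG_J]$, $|I\cap J|=k$. These relations force $a_1=O(m^{-1})$ and $a_0=O(m^{-2})$, which suffices against the $O(n)$ and $O(n^3)$ neighbour counts. Because only sums of $G$'s enter, the random nonzero sum $\sum_r Y_r$ never appears.

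\textbf{Your route.} You instead use the general M\"obius-inversion bound for sampling without replacement: $k$ positions carrying exactly zero-sum populations contribute $O(N^{-\lceil k/2\rceil})$. This gives $O(N^{-2})$ for one shared edge and $O(N^{-3})$ for disjoint triangles, which is sharper than needed and is a reusable tool. The price is threefold:
\begin{itemize}
\item you must handle seven terms;
\item you need a mixed-population version of your ``standard fact'', since the pairs $(g_r,\tilde\eta_r)$ move together and blocks produce sums like $\sum_r g_r\tilde\eta_r$;
\item you need a separate treatment of $\bar\eta=N^{-1}\sum_r\xi_r$. This must be done conditionally on the order statistics, since $\tilde\eta$ depends on them as well.
\end{itemize}

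\textbf{Two adjustments.} First, the diagonal produces products such as $\eta_r^2\eta_s^2g_u^2$, so $\E[N^{-1}\sum_r\eta_r^2]\to0$ from Step~1 is not enough by itself. Note that $N^{-1}\sum_r\eta_r^2\le 2N^{-1}\sum_r\xi_r^2+2N^{-1}\sum_r g_r^2$ is bounded in every $L^p$, and hence tends to zero in every $L^p$. The paper does the analogous step with $\E|Y_1-G_1|^6\to0$ and H\"older.

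Second, drop the empirical-CDF ``first attempt''. The claimed $O(1/N)$ bound there is unsupported, for three reasons:
\begin{itemize}
\item the linearization $(F_N(y)-\Phi(y))/\phi(y)$ has logarithmically divergent variance without truncation;
\item the error is not a degenerate $U$-statistic: there are quadratic and cubic terms in the errors, and the auxiliary edge in $F_N$ can coincide with a triangle edge;
\item the truncated tail errors depend on all the order statistics.
\end{itemize}
Step~2 already contains what you need.
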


\noindent 
The proof of this lemma is deferred to
Appendix~\ref{subsec:rank-reconstruction}. The next result provides the asymptotic normality of the cubic rank statistic.
\begin{proposition}
\label{prop:cubic-rank-statistic-clt}
Under
\(Q_n^{\mathsf{R}}\),
\begin{equation*}
S_n^{\mathsf{R}}(\Pi)
\convd\cN(0,1).
\end{equation*}
\end{proposition}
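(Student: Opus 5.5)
Under \(Q_n\), the entries \((Y_{ij})_{i<j}\) are i.i.d.\ standard Gaussians, and the rank bijection \(\Pi\) has exactly the law \(Q_n^{\mathsf{R}}\). So \(S_n^{\mathsf{R}}(\Pi)\) under \(Q_n^{\mathsf{R}}\) has the same distribution as \(S_n^{\mathsf{R}}(\Pi(Y))\) with \(Y\sim Q_n\). The plan is to transfer a CLT from the continuous statistic \(S_n(Y)\) to the rank statistic through Lemma~\ref{lem:rank-statistic-reconstruction}. This gives
\[
S_n^{\mathsf{R}}(\Pi)=S_n(Y)+o_{Q_n}(1),
\]
since \(L^2\) convergence implies convergence in probability. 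By Slutsky's lemma, it then suffices to show \(S_n(Y)\convd\cN(0,1)\) under \(Q_n\).

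For the CLT of \(S_n(Y)\) I would invoke the same result of \citet{janson1994orthogonal} used for Proposition~\ref{prop:signed-triangle-clt}. Write \(S_n(Y)=\binom n3^{-1/2}\sum_{i<j<k}\xi_{ij}\xi_{ik}\xi_{jk}\) with \(\xi_{ij}=Y_{ij}\) i.i.d., mean zero and unit variance. This is the same degenerate (completely off-diagonal, edge-indexed) cubic form as the signed triangle count, now with Gaussian rather than standardized Bernoulli edge variables. The variance is exactly \(1\): distinct triangles share at most one edge, so distinct summands are uncorrelated.

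Alternatively, and self-contained, I would apply the fourth moment theorem of Nualart--Peccati. \(S_n(Y)\) lies in the third Wiener chaos with unit variance, so it suffices to check \(\E S_n(Y)^4\to3\). Expanding the fourth moment over quadruples of triangles, the excess over \(3\) comes from configurations where the four triangles are not paired off. Every edge must still appear an even number of times, so the union of the four triangles has at most \(O(1)\) vertices beyond a connected structure. A short count shows these configurations involve \(O(n^5)\) index choices, against the normalization \(\binom n3^{-2}\asymp n^{-6}\), so their contribution is \(O(1/n)\).

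I expect no step here to be a real obstacle, since the heavy lifting is Lemma~\ref{lem:rank-statistic-reconstruction}. The only care needed is the combinatorial bound on nonpaired quadruples in the fourth-moment check, or confirming that Janson's hypotheses cover the Gaussian case. As a fallback, the martingale CLT over vertices (adding vertex \(k\) together with its edges to earlier vertices) also works, using standard conditional-variance and Lindeberg checks.
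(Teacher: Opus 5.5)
Your argument is correct and matches the paper's proof: you reduce to $S_n(Y)\convd\cN(0,1)$ under $Q_n$ using the $L^2$ approximation of Lemma~\ref{lem:rank-statistic-reconstruction} and Slutsky. The only difference is where that Gaussian CLT comes from: the paper takes it from Lemma~\ref{lem:cubic-statistic-clt}, the Wigner trace-moment CLT for $\tr(Y^3)$, rather than from Janson or the fourth moment theorem. Your fourth-moment count is sound, since non-paired quadruples span at most five vertices, and Lemma~\ref{lem:triangle-vector-clt} with $m=1$ also gives the CLT directly, so you need not check Janson's Bernoulli-edge hypotheses for Gaussian edges.
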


\begin{proof}
By Lemma~\ref{lem:cubic-statistic-clt}, \(S_n(Y)\convd\cN(0,1)\) under \(Q_n\), so the result follows by Lemma~\ref{lem:rank-statistic-reconstruction}.
\end{proof}

We now give the formal link between the parent distributions $(P_{n,d}, Q_n)$ and the distributions of rank statistics $(P_{n,d}^{\mathsf{R}}, Q_n^{\mathsf{R}})$. By the same change-of-measure argument as in Lemma~\ref{lem:pushforward},
with \(A\) replaced by \(\Pi\), we obtain that for \(d\ge n+1\), 
\begin{equation}\label{eq:rank-pushforward-likelihood}
\frac{\dd P_{n,d}^{\mathsf{R}}}
{\dd Q_n^{\mathsf{R}}}(\Pi)
=
\E_{Q_n}\left[
\frac{\dd P_{n,d}}{\dd Q_n}(Y)
\,\middle|\,\Pi
\right].
\end{equation}

\medskip

\begin{proof}[Proof of Theorem~\ref{thm:rank-likelihood-expansion}]
Apply Theorem~\ref{thm:coarsening} by taking \(P_{n,d}\) and \(Q_n\)
as the parent distributions, with \(\mathcal G_n=\sigma(\Pi)\),
\(I=J=1/(6\lambda)\), and
\[
M_n=\frac{S_n^{\mathsf{R}}(\Pi)}{\sqrt{6\lambda}},\qquad\text{and}\qquad
R_n=\frac{S_n(Y)-S_n^{\mathsf{R}}(\Pi)}{\sqrt{6\lambda}}.
\]
Theorem~\ref{thm:continuous-lan} gives the required expansion of the parent log-likelihood.
The variable $M_n$ is $\mathcal G_n$-measurable, and
Proposition~\ref{prop:cubic-rank-statistic-clt} gives $M_n\convd\cN(0,J)$.
Lemma~\ref{lem:rank-statistic-reconstruction} gives \(\|R_n\|_2\to0\).
In particular,
\[
\begin{aligned}
d_{\BL}\!\left(\cL_{Q_n}(R_n\mid\Pi),\delta_0\right)
&=\sup_{\|f\|_\infty\le1,\,\operatorname{Lip}(f)\le1}
\left|\E_{Q_n}[f(R_n)\mid\Pi]-f(0)\right|\\
&\le\sup_{\|f\|_\infty\le1,\,\operatorname{Lip}(f)\le1}
\E_{Q_n}\!\left[|f(R_n)-f(0)|\mid\Pi\right]\\
&\le\E_{Q_n}\!\left[|R_n|\mid\Pi\right]
\le\left(\E_{Q_n}\!\left[R_n^2\mid\Pi\right]\right)^{1/2}.
\end{aligned}
\]
To see the bound converges to zero in \(Q_n\)-probability, note that for every \(\varepsilon>0\), by Markov's inequality and the tower property, \(Q_n(\E_{Q_n}[R_n^2\mid\Pi]>\varepsilon^2)\) is at most \(\varepsilon^{-2}\E_{Q_n}R_n^2
=\varepsilon^{-2}\|R_n\|_2^2\to 0\).
Combining the above results with \eqref{eq:rank-pushforward-likelihood}, we then apply Theorem~\ref{thm:coarsening} to conclude the proof.
\end{proof}

We are now ready to prove the main results for this model. 

\medskip

\begin{proof}[Proof of Theorem~\ref{thm:rank-lan}]
Theorem~\ref{thm:rank-likelihood-expansion} and Proposition~\ref{prop:cubic-rank-statistic-clt}
give \eqref{eq:rank-log-limits} under \(Q_n^{\mathsf{R}}\).
The same argument as in the proof of
Theorem~\ref{thm:RGG_ER_LLR_asymptotics}, with \(J=1/(6\lambda)\),
gives the limit under \(P_{n,d}^{\mathsf{R}}\) and the TV limit
\eqref{eq:rank-tv}.
\end{proof}

\begin{proof}[Proof of Theorem~\ref{thm:rank-optimal-test}]
Using Theorem~\ref{thm:rank-likelihood-expansion} and Proposition~\ref{prop:cubic-rank-statistic-clt},
the same argument as in the proof of Theorem~\ref{thm:RGG_ER_optimal_test}
gives limiting error \(\Phi(-1/(2\sqrt{6\lambda}))\) under each distribution.
Adding the two errors proves the theorem.
\end{proof}

\appendix

\section{Classical theory}\label{app:coarsening}

\subsection{Asymptotic normality under information loss}\label{app:coarsening-proof}

\begin{proof}[Proof of Theorem~\ref{thm:coarsening}]
We provide a self-contained proof following \citet[Theorem~4.1]{davies1985asymptotic}.

Let $\cE_n=\{L_n>0\}$, define $r_n=\log L_n-M_n-R_n+I/2$ on $\cE_n$, and set $r_n=0$ on $\cE_n^c$. By \eqref{eq:coarsening-expansion}, we have $Q_n(\cE_n)\to1$ and $r_n=o_{Q_n}(1)$.

Using the identity $L_n = \inparen{L_n \wedge K} + (L_n - K)_+$ for any fixed constant $K > 0$, we have
\begin{equation}\label{eq:coarsening_conclusionSplitIntoThreeTerms}
\abs{L_n^{\cG} - \exp\inparen{M_n - \frac{J}{2}}} \leq (\text{I}) + (\text{II}) + (\text{III}),
\end{equation}
where 
\begin{align*}
(\text{I}) &= \abs{ \EE_{Q_n}\insquare{L_n \wedge K \mid \cG_n} - g_K(M_n) } \\
(\text{II}) &= \EE_{Q_n}\insquare{(L_n - K)_+ \mid \cG_n} \\
(\text{III}) &= \abs{g_K(M_n) - \exp\inparen{M_n - \frac{J}{2}}},
\end{align*}
and where we define $g_K(x) = \EE_Z\insquare{ \min\inbraces{\exp\inparen{ x + \sqrt{I-J}Z - \frac{I}{2}  }, K}  }$ for $Z \sim \cN(0,1)$. The quantity $g_K(M_n)$ is a truncated approximation to $\exp(M_n-J/2)$. In other words, term (I) describes the convergence of the truncated likelihood ratio $L_n$, while terms (II) and (III) describe the truncation error in $L_n$ and its approximation respectively.

We consider term (I) first.
Define the function $h_K(u) = e^u \wedge K$. We have
\begin{equation*}
\text{(I)} \leq \text{(I--a)} + \text{(I--b)},
\end{equation*}
where 
\begin{align*}
\text{(I--a)} &= \abs{ \EE_{Q_n} \insquare{ L_n \wedge K  \mid \cG_n } - \EE_{Q_n} \insquare{ h_K\inparen{M_n + R_n - \frac{I}{2}} \mid \cG_n  } } \\
\text{(I--b)} &= \abs{ \EE_{Q_n} \insquare{ h_K\inparen{M_n + R_n - \frac{I}{2}} \,\bigg|\, \cG_n  } - g_K(M_n) }.
\end{align*}
For term $\text{(I--a)}$, notice that on $\cE_n$, we have $L_n \wedge K = h_K(M_n + R_n -\frac{I}{2} + r_n)$. Using that $h_K$ is $K$-Lipschitz and $h_K \in [0,K]$, we have
\begin{align}\label{eq:coarsening_I--a}
\text{(I--a)} \leq K Q_n(\cE_n^c \mid \cG_n) + K \EE_{Q_n}\insquare{ \min\inbraces{\abs{r_n}, 1}  \mid \cG_n }.
\end{align}

We record an elementary observation that will be used repeatedly. For a random variable $X_n \geq 0$, we have the implication
\begin{equation}\label{eq:conditionalMarkovsInequality_generalPrinciple}
\EE_{Q_n} X_n \rightarrow 0 \quad \Longrightarrow \quad \EE_{Q_n}\insquare{X_n \mid \cG_n} = o_{Q_n}(1).  
\end{equation}
Indeed by Markov's inequality, for any $\delta > 0$, $Q_n\insquare{ \EE_{Q_n}\insquare{X_n \mid \cG_n } > \delta  } \leq \frac{\EE_{Q_n}X_n}{\delta} \to 0$.

Set $X_n \equiv \boldsymbol{1}\inbraces{\cE_n^c}$ and since $\EE_{Q_n}X_n = Q_n(\cE_n^c) \to 0$, we have from \eqref{eq:conditionalMarkovsInequality_generalPrinciple} that the first term on the right of \eqref{eq:coarsening_I--a} is $o_{Q_n}(1)$. Next, set $X_n \equiv \min\inbraces{\abs{r_n}, 1}$. Then since $r_n = o_{Q_n}(1)$, $X_n \to 0$ in $Q_n$-probability. But since $X_n$ is also bounded by 1, so it also converges to $0$ in $L^1$. We have by \eqref{eq:conditionalMarkovsInequality_generalPrinciple} again that the second term on the right of \eqref{eq:coarsening_I--a} is also $o_{Q_n}(1)$. Altogether $\text{(I--a)} = o_{Q_n}(1)$, with constants depending on $K$.

We claim that term (I--b) is also $o_{Q_n}(1)$. This is immediate from the hypothesis \eqref{eq:coarsening-conditional} applied to the function $u \mapsto \frac{1}{K}h_K\inparen{M_n + u - \frac{I}{2}}$ which is bounded in sup-norm by 1 and is $1$-Lipschitz and hence a $d_{\text{BL}}$ test function. 
We therefore deduce that $\text{(I)} = o_{Q_n}(1)$.

Controlling term (II) in \eqref{eq:coarsening_conclusionSplitIntoThreeTerms} involves showing that truncation has a negligible effect. More precisely we will show that for any $\epsilon > 0$
\begin{equation}\label{eq:coarsening_termII_conclusion}
\lim_{K\to\infty}\limsup_{n\to\infty} Q_n\insquare{ \EE_{Q_n}\insquare{(L_n - K)_+\mid \cG_n} > \epsilon  } = 0.
\end{equation}  
This will follow by showing uniform integrability of $L_n$. In particular, we will show that
\begin{equation}\label{eq:coarsening_Ln_UI}
\lim_{K\to\infty}\limsup_{n\to\infty}
\E_{Q_n}[(L_n-K)_+]=0.
\end{equation}
Since $L_n \boldsymbol{1}\inbraces{L_n > 2K} \leq 2 (L_n - K)_+$, this proves uniform integrability. With \eqref{eq:coarsening_Ln_UI} in hand, \eqref{eq:coarsening_termII_conclusion} follows by the same principle as in \eqref{eq:conditionalMarkovsInequality_generalPrinciple}.

To show \eqref{eq:coarsening_Ln_UI}, we will elucidate the weak limit $L$ of $L_n$ and show that it also has mean one. We first find the joint limit of $(M_n, R_n)$. Because \(M_n\) is \(\cG_n\)-measurable, conditioning gives, for fixed
\(s,t\in\R\),
\[
\E_{Q_n}\e^{isM_n}
\left(
\E_{Q_n}[\e^{itR_n}\mid\cG_n]
-\e^{-(I-J)t^2/2}
\right)\longrightarrow0.
\]
Indeed, the expression in parentheses converges to zero in probability by
\eqref{eq:coarsening-conditional}, and it is also uniformly bounded, so the
convergence also holds in \(L^1\).  Together with
\eqref{eq:coarsening-M}, this identifies the joint characteristic function
and yields
\[
(M_n,R_n)\convd(G_1,G_2),
\quad\text{where}\quad
G_1\sim\cN(0,J),\quad G_2\sim\cN(0,I-J),\quad\text{and}\quad G_1\perp G_2.
\]
It follows from \eqref{eq:coarsening-expansion} that
\(L_n\convd L \equiv \exp(G-I/2)\), where \(G\sim\cN(0,I)\).  Thus both $L_n$ and $L$ have mean one. Then as $n \to \infty$,
\begin{align*}
\EE_{Q_n}(L_n - K)_+ &= \EE_{Q_n} L_n - \EE_{Q_n} \insquare{L_n \wedge K} \\
&= 1 - \EE_{Q_n} \insquare{L_n \wedge K} \\
&\to 1 - \EE \insquare{L \wedge K} = \EE (L-K)_+.
\end{align*} 
Because $L$ is integrable, $(L-K)_+$ decreases to zero as $K \to \infty$, which proves \eqref{eq:coarsening_Ln_UI}.

We turn to term (III) in \eqref{eq:coarsening_conclusionSplitIntoThreeTerms}. Here we will show for any $\epsilon > 0$
\begin{equation}\label{eq:coarsening_termIII_conclusion}
\lim_{K\to\infty}\limsup_{n\to\infty} Q_n\insquare{  \abs{g_K(M_n) - \exp\inparen{M_n - \frac{J}{2}}} > \epsilon  } = 0.
\end{equation}
To show this, we decompose, for any fixed $A > 0$, 
\begin{equation*}
\begin{aligned}
&Q_n\insquare{\abs{g_K(M_n)-\exp\inparen{M_n-J/2}}>\epsilon}\\
&\quad\le Q_n\insquare{\abs{M_n}>A}
+\boldsymbol{1}\!\inbraces{\sup_{\abs{x}\le A}
\abs{g_K(x)-\exp\inparen{x-J/2}}>\epsilon}.
\end{aligned}
\end{equation*}
Since $M_n\convd\cN(0,J)$, the sequence $M_n$ is tight. We may choose $A > 0$ large enough so that the $\limsup_n Q_n\insquare{\abs{M_n} > A}$ is arbitrarily small. 

Fix such an $A$. By monotone convergence, $g_K(x) \uparrow
\exp\inparen{x-\frac{J}{2}}$. Furthermore, since $g_K$'s and their limit are continuous, Dini's theorem upgrades this to $\sup_{\abs{x} \leq A} \abs{g_K(x) - \exp\inparen{x - \frac{J}{2}}} \to 0$ as $K \to \infty$. Taking first $\limsup_n$ followed by $K \to \infty$ yields \eqref{eq:coarsening_termIII_conclusion}.

Combining the fixed \(K\) estimate \((\mathrm I)=o_{Q_n}(1)\) with the two limiting estimates for (II) and (III), we proceed by first choosing \(K\) sufficiently large such that the latter two errors are small in \(\limsup_{n\to\infty}\). Then, with this \(K\) fixed, letting \(n\to\infty\) to eliminate (I). The decomposition \eqref{eq:coarsening_conclusionSplitIntoThreeTerms} therefore gives
\[
L_n^{\cG}=\exp\inparen{M_n-\frac{J}{2}}+o_{Q_n}(1).
\]Finally, \(M_n\) is tight, so \(\exp\inparen{M_n-J/2}\) is bounded away from zero in \(Q_n\)-probability. Dividing the previous display by this leading term gives the relative approximation \(L_n^{\cG}/\exp\inparen{M_n-J/2}=1+o_{Q_n}(1)\). Taking logarithms and using \(\log(1+o_{Q_n}(1))=o_{Q_n}(1)\) proves \eqref{eq:coarsening-conclusion}.
\end{proof}

\subsection{Hoeffding's criterion for conditional convergence}\label{app:two-replica-proof}

We use the following version of Hoeffding's trick \citep{hoeffding1952permutations}; see
\citet[Lemma~4.1]{dumbgen2013projections}
and \citet[Lemma~2.3]{bucher2019conditional} for this formulation.

\begin{lemma}[\citet{hoeffding1952permutations}]
\label{lem:two-replica-criterion}
For each \(n\), let \(\cG_n\) be a $\sigma$-algebra and let
\(R_n^{(1)},R_n^{(2)}\) be conditionally independent with the same
conditional distribution given \(\cG_n\).  Let \(\nu\) be a fixed probability distribution on
\(\R\).  If
\begin{equation}\label{eq:two-replica-joint-limit}
(R_n^{(1)},R_n^{(2)})
\convd\nu\otimes\nu,
\end{equation}
then
\begin{equation}\label{eq:two-replica-conditional-limit}
d_{\BL}\!\left(
\cL(R_n^{(1)}\mid\cG_n),\nu
\right)\longrightarrow0
\quad\text{in probability}.
\end{equation}
\end{lemma}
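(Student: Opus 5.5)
We prove Lemma~\ref{lem:two-replica-criterion} by reducing conditional convergence to a statement about conditional characteristic functions (or, equivalently, conditional expectations of a countable convergence-determining class of test functions), and then controlling their conditional variance through the two replicas. Fix a bounded continuous function \(f\) (we will take \(f(x)=\e^{itx}\) for real and imaginary parts, or a countable dense family of bounded Lipschitz functions). Set \(F_n:=\E[f(R_n^{(1)})\mid\cG_n]\), which also equals \(\E[f(R_n^{(2)})\mid\cG_n]\) because the conditional laws coincide. By conditional independence,
\[
\E[F_n^2]=\E\bigl[\E[f(R_n^{(1)})\mid\cG_n]\,\E[f(R_n^{(2)})\mid\cG_n]\bigr]=\E[f(R_n^{(1)})f(R_n^{(2)})],
\]
and of course \(\E[F_n]=\E[f(R_n^{(1)})]\). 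Writing \(\bar f=\int f\dd\nu\), the hypothesis \eqref{eq:two-replica-joint-limit} applied to the bounded continuous functions \((x,y)\mapsto f(x)f(y)\) and \((x,y)\mapsto f(x)\) gives \(\E[F_n^2]\to\bar f^2\) and \(\E[F_n]\to\bar f\). Hence
\[
\E\bigl[(F_n-\bar f)^2\bigr]=\E[F_n^2]-2\bar f\,\E[F_n]+\bar f^2\longrightarrow0,
\]
so \(\E[f(R_n^{(1)})\mid\cG_n]\to\int f\dd\nu\) in \(L^2\), hence in probability, for each fixed bounded continuous \(f\) (for complex \(f\) one uses \(|F_n-\bar f|^2\) and the function \(f(x)\overline{f(y)}\) in the same way).

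The remaining step, which I expect to be the main obstacle, is upgrading this pointwise-in-\(f\) convergence to convergence of the supremum over the whole bounded Lipschitz unit ball, since that class is uncountable. Two routes are available. The first is to use characteristic functions: for each rational \(t\) the conditional characteristic function converges in probability. By a diagonal subsequence argument, every subsequence has a further subsequence along which, almost surely, convergence holds simultaneously for all rational \(t\). Along such a subsequence one still needs tightness of the conditional laws to conclude, and this follows because \(\E[Q(|R_n^{(1)}|>A\mid\cG_n)]=\Pp(|R_n^{(1)}|>A)\) is uniformly small, so the conditional laws are tight in probability. With tightness in hand, Lévy's continuity theorem gives conditional weak convergence almost surely along the sub-subsequence, and since \(d_{\BL}\) metrizes weak convergence this yields \(d_{\BL}\to0\) almost surely along it, hence in probability along the full sequence.

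The second route is more direct. Restrict to a compact interval \([-A,A]\) carrying most of the mass of \(\nu\) and, uniformly in \(n\), most of the mass of \(R_n^{(1)}\) (using the tightness bound above together with Markov's inequality). By Arzelà--Ascoli, the bounded \(1\)-Lipschitz functions restricted to \([-A,A]\) admit a finite \(\varepsilon\)-net \(f_1,\dots,f_m\). Extend the \(f_k\) to bounded Lipschitz functions on \(\R\), for example by constant extension. The supremum defining \(d_{\BL}\) is then bounded by \(\max_k|\E[f_k(R_n^{(1)})\mid\cG_n]-\int f_k\dd\nu|\) plus terms of order \(\varepsilon\), plus the conditional and limiting tail masses outside \([-A,A]\). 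The maximum over finitely many \(k\) tends to zero in probability by the first paragraph, and the tail terms are small in probability. Letting \(\varepsilon\to0\) and \(A\to\infty\) concludes \eqref{eq:two-replica-conditional-limit}. I would present this second route, since it avoids subsequence bookkeeping.
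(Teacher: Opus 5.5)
Your proposal is correct and is essentially the argument the paper relies on: the paper gives no proof of its own but defers to \citet{hoeffding1952permutations}, \citet[Lemma~4.1]{dumbgen2013projections} and \citet[Lemma~2.3]{bucher2019conditional}, whose argument is your Hoeffding identity $\E[F_n^2]=\E[f(R_n^{(1)})f(R_n^{(2)})]$ (giving $L^2$ convergence of $\E[f(R_n^{(1)})\mid\cG_n]$ for each fixed bounded continuous $f$) followed by a uniformization over the bounded Lipschitz ball, which your Route~2 (tightness plus a finite net on $[-A,A]$) carries out correctly and self-containedly. The one loose point is your Route~1 sketch, which you do not adopt: tightness of the conditional laws in probability does not give almost-sure tightness along the diagonal sub-subsequence, and convergence of characteristic functions at rational $t$ alone does not force weak convergence, so that route would also need, for example, the functions $x\mapsto\min\{1,(|x|-A)_+\}$, $A\in\mathbb N$, added to the countable family.
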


\subsection{Le Cam's third lemma}\label{app:le-cam}

Let \(P_n\ll Q_n\) be probability distributions and write \(L_n=\dd P_n/\dd Q_n\). We state the conclusion of Example~6.7 from \citet{vanderVaart1998asymptotic} below.

\begin{lemma}[Le Cam's third lemma]
\label{lem:le-cam-third}
If, under \(Q_n\),
\[
(T_n,\log L_n)\convd
\cN\!\left(
\begin{pmatrix}\mu\\-J/2\end{pmatrix},
\begin{pmatrix}v&c\\c&J\end{pmatrix}
\right),
\]
then \(T_n\convd\cN(\mu+c,v)\) under \(P_n\).
(The limiting Gaussian distribution may be degenerate.)
\end{lemma}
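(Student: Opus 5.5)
The plan is the standard route through Le Cam's first lemma followed by a change of measure. Write $(T,\Lambda)$ for the limiting Gaussian vector under $Q_n$, so that $\Lambda\sim\cN(-J/2,J)$. Since $\E e^{\Lambda}=e^{-J/2+J/2}=1$, Le Cam's first lemma (in the form: $\log L_n\convd \Lambda$ under $Q_n$ with $\E e^{\Lambda}=1$) gives the contiguity $P_n\lhd Q_n$. This fact is needed only to rule out mass escaping, and it will also come out directly from the uniform integrability step below.

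The main step is to compute, for a bounded continuous $f$,
\[
\E_{P_n} f(T_n)=\E_{Q_n}[f(T_n)L_n]+P_n(L_n'=\infty\text{-part}),
\]
where the second term is absent since $P_n\ll Q_n$ is assumed. So $\E_{P_n}f(T_n)=\E_{Q_n}[f(T_n)e^{\log L_n}]$. Under $Q_n$, the continuous mapping theorem gives $f(T_n)L_n\convd f(T)e^{\Lambda}$. To pass to expectations I would show that $L_n$ is uniformly integrable under $Q_n$. The argument is the same as the one used in the proof of Theorem~\ref{thm:coarsening}. We have $\E_{Q_n}L_n=1$ and $\E_{Q_n}[L_n\wedge K]\to\E[e^{\Lambda}\wedge K]$. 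Hence $\limsup_n\E_{Q_n}(L_n-K)_+=\E(e^\Lambda-K)_+\to0$ as $K\to\infty$. It follows that $f(T_n)L_n$ is uniformly integrable as well, and therefore $\E_{P_n}f(T_n)\to\E[f(T)e^{\Lambda}]$. I expect this uniform integrability step to be the only delicate point, and it is handled by the mean-one property of the log-normal limit.

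It remains to identify the tilted law $B\mapsto\E[\1_B(T)e^{\Lambda}]$. I would do this with characteristic functions, using the Gaussian moment generating function of the jointly Gaussian pair:
\[
\E[e^{itT+\Lambda}]=\exp\!\Big(it\mu-\tfrac J2-\tfrac{t^2v}{2}+\tfrac J2+itc\Big)=\exp\!\Big(it(\mu+c)-\tfrac{t^2 v}{2}\Big).
\]
Here the exponent comes from the mean $it\mu-J/2$ plus half the variance of $itT+\Lambda$, which is $\tfrac12(-t^2v+2itc+J)$. This is the characteristic function of $\cN(\mu+c,v)$. Applying the previous step to $f(x)=\cos(tx)$ and $f(x)=\sin(tx)$, Lévy's continuity theorem gives $T_n\convd\cN(\mu+c,v)$ under $P_n$. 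The degenerate cases $v=0$ or $J=0$ are covered by the same computation, because the moment generating function identity holds for degenerate Gaussians too.
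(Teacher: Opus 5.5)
Your proof is correct and is essentially the standard argument behind the result the paper cites without proof (van der Vaart, Theorem~6.6 and Example~6.7). That argument has the same three ingredients as yours: the change of measure $\E_{P_n} f(T_n)=\E_{Q_n}[f(T_n)L_n]$, passage to the limit using that the log-normal limit $e^{\Lambda}$ has mean one, and the Gaussian tilt computation of the characteristic function, which you carry out correctly. The only difference is technical: the textbook passes to the limit with the one-sided portmanteau bound $\liminf_n \E_{P_n} f(T_n)\ge \E[f(T)e^{\Lambda}]$ for nonnegative continuous $f$, together with $\E e^{\Lambda}=1$, whereas you get two-sided convergence from the uniform integrability step in the proof of Theorem~\ref{thm:coarsening}, which is valid here because $P_n\ll Q_n$ gives $\E_{Q_n}L_n=1$.
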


\section{Proofs for the parent models}\label{sec:continuous}

The purpose of this section is to establish Theorem~\ref{thm:continuous-lan}
and Proposition~\ref{prop:parent-asymptotics}.

\subsection{Expanding the parent log-likelihood ratio}

Define the Gram matrix \(R=(\langle x_i,x_j\rangle)_{i,j\le n}\), where $x_i \overset{\text{iid}}{\sim} \Unif(\SSS^{d-1})$. Equivalently, by setting $Y_{ii} \equiv 0$, we have the relation
\begin{equation*}
R=I_n+d^{-1/2}Y.
\end{equation*}
For \(a>(n-1)/2\), define
\[
\Gamma_n(a)=\pi^{n(n-1)/4}
\prod_{j=1}^n\Gamma\!\left(a-\frac{j-1}{2}\right).
\]

We first record an exact expression of the parent log-likelihood ratio.
\begin{lemma}\label{lem:exact-cont-likelihood}
Let $d\ge n+1$. Then, $Q_n$-almost surely,
\begin{align}
\log\frac{\dd P_{n,d}}{\dd Q_n}(Y)
={}&K_{n,d}
+\frac{d-n-1}{2}\log\det(I_n+d^{-1/2}Y)\notag\\
&+\frac14\tr(Y^2)+\log\1\{I_n+d^{-1/2}Y\succ0\},
\label{eq:exact-cont-likelihood-new}
\end{align}
where the right-hand side is understood as $-\infty$ whenever $I_n+d^{-1/2}Y$ is not positive definite, and where
\[
K_{n,d}=\log\frac{\Gamma(d/2)^n}{\Gamma_n(d/2)}
+\frac12\binom n2\log\frac{2\pi}{d}.
\]
\end{lemma}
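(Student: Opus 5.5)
The plan is to compute the density of $Y$ under $P_{n,d}$ on the set of off-diagonal coordinates $(Y_{ij})_{i<j}\in\R^{\binom n2}$ and divide by the standard Gaussian density $(2\pi)^{-\binom n2/2}\exp(-\frac14\tr(Y^2))$. The latter accounts for the term $\frac14\tr(Y^2)$, since $\tr(Y^2)=2\sum_{i<j}Y_{ij}^2$, and for the factor $(2\pi)^{\binom n2/2}$ in $K_{n,d}$. So it remains to show that the Gram matrix $R=(\langle x_i,x_j\rangle)$ of i.i.d.\ uniform points on $\SSS^{d-1}$ has, for $d\ge n+1$ (indeed $d\ge n$ suffices), a density on the off-diagonal entries $(R_{ij})_{i<j}$ equal to
\[
\frac{\Gamma(d/2)^n}{\Gamma_n(d/2)}\,\det(R)^{(d-n-1)/2}\,\1\{R\succ0\}.
\]
After the change of variables $R_{ij}=d^{-1/2}Y_{ij}$, whose Jacobian is $d^{-\binom n2/2}$, this yields exactly the constant $\frac12\binom n2\log\frac{2\pi}{d}$ together with the $\Gamma$ ratio.

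To obtain the Gram density, I will start from a Gaussian representation. Write $x_i=g_i/\|g_i\|$ with $g_i\sim\cN(0,I_d)$ i.i.d. Then $W=(\langle g_i,g_j\rangle)$ is Wishart $W_n(d,I_n)$, and for $d\ge n$ it has density $\frac{1}{2^{dn/2}\Gamma_n(d/2)}\det(W)^{(d-n-1)/2}e^{-\tr W/2}$ on positive definite matrices. Next I apply the change of variables $W=DRD$, where $D=\operatorname{diag}(\sqrt{w_1},\dots,\sqrt{w_n})$ and $R$ is a correlation matrix, i.e.\ the map $(w_{ii},R_{ij})\mapsto W$. Its Jacobian is $\prod_i w_i^{(n-1)/2}$, since each off-diagonal entry $W_{ij}=\sqrt{w_iw_j}R_{ij}$ and each $i$ appears in $n-1$ pairs. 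Using $\det W=\prod_i w_i\det R$, the joint density factorizes as
\[
\det(R)^{(d-n-1)/2}\prod_i w_i^{d/2-1}e^{-w_i/2}
\]
up to constants. This shows that $R$ is independent of the diagonal and has density proportional to $\det(R)^{(d-n-1)/2}$. Integrating out each $w_i$ gives $2^{d/2}\Gamma(d/2)$, which cancels the $2^{dn/2}$ and leaves the normalizing constant $\Gamma(d/2)^n/\Gamma_n(d/2)$. Finally, the $R$ so obtained is the Gram matrix of the $x_i$, because $R_{ij}=\langle g_i,g_j\rangle/(\|g_i\|\|g_j\|)$.

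I will then assemble the pieces. Under $Q_n$, the event $\{I_n+d^{-1/2}Y\succ0\}$ has positive probability. Off this event the $P_{n,d}$ density vanishes, which is the reason for the $\log\1$ term with the $-\infty$ convention; $Q_n$-a.s.\ the determinant is nonzero, so the expression is well defined. The main place to be careful is the Jacobian and constant bookkeeping in the $W\mapsto(D,R)$ change of variables, namely confirming the exponent $(n-1)/2$ and checking that the powers of 2 cancel. That is routine, but it is the only point where an error could enter. The condition $d\ge n+1$ ensures the exponent $(d-n-1)/2\ge0$, which keeps the density bounded, and guarantees $P_{n,d}\ll Q_n$, which Lemma~\ref{lem:pushforward} uses.
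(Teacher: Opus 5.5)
Your proposal is correct and follows the same route as the paper. Both arguments take the density of the Gram matrix, change variables via $R=I_n+d^{-1/2}Y$ with Jacobian $d^{-\binom n2/2}$, and divide by the Gaussian density $(2\pi)^{-\binom n2/2}\exp(-\tr(Y^2)/4)$. The only difference is that the paper simply cites Muirhead (Section~5.1) for the density $\frac{\Gamma(d/2)^n}{\Gamma_n(d/2)}\det(R)^{(d-n-1)/2}\1\{R\succ0\}$, whereas you derive it from the $W_n(d,I_n)$ density via $W=DRD$, and your bookkeeping there is correct (Jacobian $\prod_i w_i^{(n-1)/2}$, $\det W=\prod_i w_i\det R$, each $w_i$-integral equal to $2^{d/2}\Gamma(d/2)$).
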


\begin{proof}
The density of $R$, with respect to its strict upper-triangular entries, is (see e.g.~\citep[Section~5.1]{muirhead1982aspects})
\[
\frac{\Gamma(d/2)^n}{\Gamma_n(d/2)}\det(R)^{(d-n-1)/2}\1\{R\succ0\}.
\]
By making the change of variables $R=I_n+d^{-1/2}Y$, we obtain the Jacobian $d^{-\binom n2/2}$. Divide the resulting density by the density of $Q_n$, which is
\[
\prod_{i<j}\phi(Y_{ij})
=(2\pi)^{-\binom n2/2}\exp(-\tr(Y^2)/4).
\]
The normalizing factor is $\Gamma(d/2)^n\Gamma_n(d/2)^{-1}(2\pi/d)^{\binom n2/2}$, whose logarithm is $K_{n,d}$.
\end{proof}

We next expand the terms in \eqref{eq:exact-cont-likelihood-new} in an effort to isolate the dominant statistics. For this proof, define
\begin{equation}\label{eq:RGG_ER_highProbEvent}
\mathcal E_n=\left\{\norm{Y}_{\op}\le C_0\sqrt n\right\},
\end{equation}
where $C_0>0$ is a sufficiently large constant to be chosen.

\begin{lemma}
\label{lem:RGG_ER_parentLLR_quadCubicQuarticFOrm}
Suppose \eqref{eq:RGG_critical_scaling} holds. Then, on $\cE_n$, for all
sufficiently large $n$,
\begin{align}
\log\frac{\dd P_{n,d}}{\dd Q_n}(Y)
={}&K_{n,d}
+\frac{n+1}{4d}\tr(Y^2)
+\frac{d-n-1}{6d^{3/2}}\tr(Y^3)
-\frac{d-n-1}{8d^2}\tr(Y^4)
+r_{n,d},
\label{eq:fourth-order-likelihood-new}
\end{align}
where the remainder satisfies
\begin{equation}
|r_{n,d}|
\leq C\frac{n\norm{Y}_{\op}^5}{d^{3/2}}=O(n^{-1}).
\label{eq:parent-likelihood-remainder}
\end{equation}
\end{lemma}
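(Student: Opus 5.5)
The approach is a Taylor expansion of $\log\det(I_n+d^{-1/2}Y)$ on the event $\cE_n$, inserted into the exact formula of Lemma~\ref{lem:exact-cont-likelihood}. On $\cE_n$ we have $\|d^{-1/2}Y\|_{\op}\le C_0\sqrt{n/d}=O(n^{-1})$, so for large $n$ the matrix $I_n+d^{-1/2}Y$ is positive definite. The indicator term is therefore $0$, and the logarithm is analytic on the spectrum. Writing $\mu_1,\dots,\mu_n$ for the eigenvalues of $Y$, we have
\[
\log\det(I_n+d^{-1/2}Y)=\sum_{i}\log(1+d^{-1/2}\mu_i).
\]
Apply the scalar expansion $\log(1+u)=u-u^2/2+u^3/3-u^4/4+e(u)$ with $|e(u)|\le C|u|^5$ for $|u|\le 1/2$. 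Summing over $i$ gives
\[
\log\det(I_n+d^{-1/2}Y)=d^{-1/2}\tr Y-\tfrac{1}{2d}\tr(Y^2)+\tfrac{1}{3d^{3/2}}\tr(Y^3)-\tfrac{1}{4d^2}\tr(Y^4)+E,
\]
with $|E|\le C\sum_i|\mu_i|^5d^{-5/2}\le Cn\|Y\|_{\op}^5d^{-5/2}$. Since $Y$ has zero diagonal, $\tr Y=0$.

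Next, multiply by $(d-n-1)/2$ and add $\tfrac14\tr(Y^2)$. The quadratic coefficient becomes
\[
\tfrac14-\tfrac{d-n-1}{4d}=\tfrac{n+1}{4d},
\]
which gives the $\tr(Y^2)$ term in \eqref{eq:fourth-order-likelihood-new}. The cubic and quartic terms come out directly as $\tfrac{d-n-1}{6d^{3/2}}\tr(Y^3)$ and $-\tfrac{d-n-1}{8d^2}\tr(Y^4)$. The remainder is $r_{n,d}=\tfrac{d-n-1}{2}E$. Hence
\[
|r_{n,d}|\le C\,d\cdot n\|Y\|_{\op}^5d^{-5/2}=Cn\|Y\|_{\op}^5/d^{3/2}.
\]

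Finally, on $\cE_n$ we have $\|Y\|_{\op}^5\le C_0^5n^{5/2}$. With $d\asymp\lambda n^3$ this gives $n\cdot n^{5/2}/n^{9/2}=n^{-1}$, so $r_{n,d}=O(n^{-1})$.

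The only delicate point is checking that $|d^{-1/2}\mu_i|\le 1/2$ uniformly on $\cE_n$, which follows from $d\gg n$. No step here is a real obstacle. The genuine difficulty comes later: showing that $\cE_n$ has high probability, and controlling $K_{n,d}$ together with the quadratic and quartic traces.
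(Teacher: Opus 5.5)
Your proposal is correct and follows essentially the same route as the paper: on $\cE_n$ the indicator vanishes, the log-determinant is Taylor expanded to fourth order using $\tr Y=0$ (you make the expansion explicit eigenvalue by eigenvalue), the quadratic coefficient combines with the Gaussian reference term to give $\frac{n+1}{4d}$, and the fifth-order remainder is bounded by $Cn\norm{Y}_{\op}^5/d^{3/2}=O(n^{-1})$. The one step you leave implicit is that $d\ge n+1$ for large $n$, which is needed for the exact formula of Lemma~\ref{lem:exact-cont-likelihood} and is immediate from \eqref{eq:RGG_critical_scaling}.
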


\begin{proof}
On $\cE_n$,
$
d^{-1/2}\norm{Y}_{\op}\leq C_0\sqrt{n/d}=o(1).
$
In particular, $I_n+d^{-1/2}Y\succ0$ for all sufficiently large $n$, so
the indicator term in \eqref{eq:exact-cont-likelihood-new} vanishes. Since
$\tr Y=0$, Taylor expansion of the determinant term in \eqref{eq:exact-cont-likelihood-new} gives
\begin{align*}
\frac{d-n-1}{2}\log\det(I_n+d^{-1/2}Y)
={}&-\frac{d-n-1}{4d}\tr(Y^2)
+\frac{d-n-1}{6d^{3/2}}\tr(Y^3)\\
&-\frac{d-n-1}{8d^2}\tr(Y^4)
+O\!\left(\frac{n\norm{Y}_{\op}^5}{d^{3/2}}\right).
\end{align*}
The Gaussian reference contributes $\tr(Y^2)/4$, so the combined coefficient of $\tr(Y^2)$ is
\[
-\frac{d-n-1}{4d}+\frac14=\frac{n+1}{4d},
\]
which proves \eqref{eq:fourth-order-likelihood-new} and the inequality in \eqref{eq:parent-likelihood-remainder}. Finally, on $\cE_n$, the bound is
\[
O\!\left(\frac{n^{7/2}}{d^{3/2}}\right)=O(n^{-1}),
\]
which proves the assertion in \eqref{eq:parent-likelihood-remainder}.
\end{proof}

Our next lemma gives a simplification of the constant term in Lemma \ref{lem:exact-cont-likelihood}, revealing that it is exactly the constant term appearing in \eqref{eq:continuous-lan}. 
\begin{lemma}\label{lem:normalizer}
Suppose \eqref{eq:RGG_critical_scaling} holds. Then 
\begin{equation*}
K_{n,d}=-\frac{1}{12\lambda}
+o(1).
\end{equation*}
\end{lemma}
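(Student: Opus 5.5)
The plan is to compute $K_{n,d}$ exactly as a sum of log-Gamma differences and then expand each summand with Stirling's formula. No probabilistic input is needed; this is purely an asymptotic computation in the regime $d\asymp n^3$.

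\textbf{Step 1: algebraic simplification.} Write $a=d/2$ and substitute the definition of $\Gamma_n$:
\[
\log\frac{\Gamma(a)^n}{\Gamma_n(a)}=-\frac{n(n-1)}{4}\log\pi+\sum_{k=0}^{n-1}\bigl[\log\Gamma(a)-\log\Gamma(a-k/2)\bigr].
\]
Since $\frac12\binom n2=\frac{n(n-1)}4$, the second term of $K_{n,d}$ equals $\frac{n(n-1)}{4}\bigl(\log\pi+\log 2-\log d\bigr)=\frac{n(n-1)}{4}\bigl(\log\pi-\log a\bigr)$. The powers of $\pi$ therefore cancel. Using $\sum_{k=0}^{n-1}k/2=n(n-1)/4$, I obtain the exact identity
\[
K_{n,d}=\sum_{k=0}^{n-1}\Bigl[\log\Gamma(a)-\log\Gamma(a-h_k)-h_k\log a\Bigr],\qquad h_k=k/2 .
\]

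\textbf{Step 2: Stirling expansion of each summand.} Use $\log\Gamma(x)=(x-\tfrac12)\log x-x+\tfrac12\log(2\pi)+\tfrac1{12x}+O(x^{-3})$, which is uniform for $x\ge a/2$. This applies because $h\le n/2$ and $h/a\le n/d=o(1)$. Set $t=h/a$ and expand $\log(1-t)=-t-t^2/2-t^3/3-O(t^4)$. A direct computation gives
\[
\log\Gamma(a)-\log\Gamma(a-h)-h\log a=-\frac{h^2}{2a}-\frac{h}{2a}-\frac{h^3}{6a^2}+O\!\Bigl(\frac{h^2+h}{a^2}+\frac{h^4}{a^3}\Bigr),
\]
where the $\tfrac1{12x}$ correction terms contribute only $O(h/a^2)$. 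The main care needed is to track the $(a-h-\tfrac12)$ prefactor against the series in $t$ accurately enough that the $h^2/a$ coefficient comes out as exactly $-\tfrac12$, with the errors uniform in $h\le n/2$.

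\textbf{Step 3: summation.} Sum over $k<n$ with $h_k=k/2$:
\begin{itemize}[leftmargin=*]
\item The leading term is $\sum_k h_k^2/(2a)=\frac{1}{d}\cdot\frac14\sum_{k<n}k^2=\frac{n^3}{12d}(1+o(1))$, which tends to $\frac1{12\lambda}$ by \eqref{eq:RGG_critical_scaling}.
\item The term $\sum_k h_k/(2a)$ is $O(n^2/d)=o(1)$.
\item The term $\sum_k h_k^3/a^2$ is $O(n^4/d^2)=o(1)$.
\item The error terms sum to $O(n^3/d^2+n^5/d^3)=o(1)$.
\end{itemize}
Hence $K_{n,d}=-\frac{1}{12\lambda}+o(1)$.

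The only real obstacle is bookkeeping: obtaining the exact coefficient $-\tfrac12$ of $h^2/a$ in Step 2 and confirming that every other term is $o(1)$ after summation when $d\asymp n^3$. Nothing conceptually difficult arises.
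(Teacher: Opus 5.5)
Your proposal is correct and follows the paper's proof. Both arguments use the same exact identity $K_{n,d}=\sum_{k<n}\bigl[\log\Gamma(a)-\log\Gamma(a-k/2)-\tfrac k2\log a\bigr]$ and then expand each summand so that the leading term $-h^2/(2a)$ sums to $-n^3/(12d)\to-1/(12\lambda)$, with every other contribution of order $O(n^2/d+n^4/d^2)$. The only cosmetic difference is that the paper Taylor-expands in the shift using $\psi,\psi'$ (remainder via $\psi''$) together with $\psi(x)=\log x-\tfrac1{2x}+O(x^{-2})$ and $\psi'(x)=\tfrac1x+O(x^{-2})$, whereas you apply Stirling at $a$ and $a-h$ and expand $\log(1-h/a)$; your coefficients check out.
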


\begin{proof}
We first show that whenever \(n=o(d)\), it holds that
\begin{equation}\label{eq:normalizer-expansion-new}
K_{n,d}=-\frac{n(n-1)(2n+5)}{24d}
+O\!\left(\frac{n^4}{d^2}\right).
\end{equation}
The result then follows immediately from the critical scaling \eqref{eq:RGG_critical_scaling}.

Put \(x=d/2\), \(m=\binom n2\), and
\(F_x(a)=\log\Gamma(x)-\log\Gamma(x-a)\).  We have the identity
\[
K_{n,d}=\sum_{r=1}^{n-1}F_x\!\inparen{\frac{r}{2}}-\frac m2\log x.
\]
Set \(\psi=\Gamma'/\Gamma\). We claim that for all $0 \leq a \leq n/2$, 
\[
F_x(a)=a\psi(x)-\frac{a^2}{2}\psi'(x)
+O\!\inparen{\frac{a^3}{x^2}},
\]
where the constant in the remainder term is independent of $n$ and $d$. 
Indeed, a Taylor expansion to second order has remainder term bounded in magnitude by $\frac{a^3}{6} \sup_{u\in[x-a,x]}|\psi''(u)|$. Then $x - a \asymp x$ and $\psi''(u) = O(u^{-2})$ gives \(\sup_{u\in[x-a,x]}|\psi''(u)|=O(x^{-2})\).

Substituting this in, we obtain
\begin{align*}
K_{n,d}
={}&\frac m2(\psi(x)-\log x)
-\frac18\left(\sum_{r=1}^{n-1}r^2\right)\psi'(x)
+O\left(\frac{\sum_{r=1}^{n-1}r^3}{x^2}\right),
\end{align*}
where we used $\sum_{r=1}^{n-1}r=m$. The first two terms are approximated by the first term on the right in \eqref{eq:normalizer-expansion-new}, using the facts that
\[
\psi(x)=\log x-\frac1{2x}+O(x^{-2}),\qquad\text{and}\qquad
\psi'(x)=\frac1x+O(x^{-2}),
\]
and
\[
2m+\sum_{r=1}^{n-1}r^2=\frac{n(n-1)(2n+5)}6.
\]
Here, the approximation error $O(n^3/d^2)$ is absorbed into the second term in \eqref{eq:normalizer-expansion-new}. Finally, for the third term, we use the fact that $\sum_{r=1}^{n-1}r^3=O(n^4)$.
\end{proof}

\subsection{Estimates for statistics of the parent matrix}

Based on Lemma \ref{lem:RGG_ER_parentLLR_quadCubicQuarticFOrm}, we focus on estimating those leading order statistics.

\begin{lemma}\label{lem:parent-moment-estimates}
Suppose \eqref{eq:RGG_critical_scaling} holds. Then under
$Q_n$, the following hold:
\begin{align}
&\tr(Y^2)=n(n-1)+O_{Q_n}(n),
\label{eq:Y-square}\\
&\tr(Y^4)
=2n(n-1)(n-2)+3n(n-1)
+o_{Q_n}(n^3).
\label{eq:Y-four-asymptotic}
\end{align}
\end{lemma}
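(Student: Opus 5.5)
Under $Q_n$, $Y$ is a symmetric matrix with zero diagonal and i.i.d.\ $\cN(0,1)$ entries above the diagonal. Both claims follow from computing the mean and variance of each trace and applying Chebyshev's inequality.

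For \eqref{eq:Y-square}, write $\tr(Y^2)=2\sum_{i<j}Y_{ij}^2$. Its mean is $2\binom n2=n(n-1)$. Its variance is $4\binom n2\Var(Y_{12}^2)=8\binom n2=O(n^2)$. Chebyshev then gives fluctuations of order $O_{Q_n}(n)$.

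For \eqref{eq:Y-four-asymptotic}, expand $\tr(Y^4)=\sum_{i,j,k,l}Y_{ij}Y_{jk}Y_{kl}Y_{li}$ over closed walks of length four with no loops, since the diagonal is zero. I split the walks by the number of distinct edges visited.

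\emph{Mean.} Only walks in which every edge is traversed an even number of times contribute. The first type is $i\to j\to i\to j\to i$, which uses a single edge four times and contributes $\E Y_{ij}^4=3$; there are $n(n-1)$ ordered pairs, giving $3n(n-1)$. The second type uses two distinct edges sharing a vertex, each traversed twice; these are the walks $i\to j\to i\to l\to i$ with $l\neq j$ and $j\to i\to l\to i\to j$. Counting them gives $2n(n-1)(n-2)$ ordered walks, each of expectation $1$. Four-cycles $i,j,k,l$ distinct have mean zero. Hence $\E\tr(Y^4)=2n(n-1)(n-2)+3n(n-1)$, matching the centering.

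\emph{Variance.} I write $\tr(Y^4)=T_1+T_2+T_4$, where $T_1$ collects single-edge walks, $T_2$ collects two-edge (``cherry'') walks, and $T_4$ is $8\sum_{\text{4-cycles}}Y_{ij}Y_{jk}Y_{kl}Y_{li}$ over unordered 4-cycles. The three pieces have their variances bounded separately.
\begin{itemize}[leftmargin=*]
\item $T_4$ is a sum of $O(n^4)$ mean-zero products. These are orthogonal for distinct cycles, since two distinct cycles have a non-shared edge appearing to the first power. So $\Var(T_4)=O(n^4)$ and $T_4=O_{Q_n}(n^2)$.
\item $T_2=c\sum_i\sum_{j\ne l}Y_{ij}^2Y_{il}^2$. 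Centering, I write $Y_{ij}^2=1+W_{ij}$ with $W_{ij}$ mean zero. The linear part $\sum_{i,j,l}W_{ij}$ has variance $O(n^4)$. The bilinear part $\sum_{i,j\neq l}W_{ij}W_{il}$ is a sum of orthogonal terms with variance $O(n^3)$. So $T_2-\E T_2=O_{Q_n}(n^2)$.
\item $T_1=O_{Q_n}(n^2)$ trivially.
\end{itemize}
All fluctuations are therefore $O_{Q_n}(n^2)=o_{Q_n}(n^3)$.

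The only delicate step is the combinatorial bookkeeping of walk types in the mean of $\tr(Y^4)$: getting the factor $2$ in front of $n(n-1)(n-2)$ and the factor $3$ from the fourth moment. The variance estimates are crude, with ample room, since only $o(n^3)$ is needed.
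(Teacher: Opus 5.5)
Your proposal is correct and follows essentially the same route as the paper: mean and variance plus Chebyshev for $\tr(Y^2)$. For $\tr(Y^4)$ you likewise use the trace-method split into single-edge, wedge, and four-cycle monomials, the exact mean $2n(n-1)(n-2)+3n(n-1)$, and an $O(n^4)$ variance bound with Chebyshev. The only cosmetic differences are that you bound the three pieces separately in probability instead of computing the paper's cross-covariances, and you treat the wedge term via the centering $Y_{ij}^2=1+W_{ij}$ rather than by counting pairs of wedges that share an edge.
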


\begin{proof}
The standard Gaussian moments show that for every fixed $r\in(0,\infty)$,
\begin{equation}\label{eq:RGG_ER_Ye_uniformMomentBound}
\E_{Q_n}|Y_{12}|^r<\infty.
\end{equation}
By independence, we have
\[
\E_{Q_n}\tr(Y^2)=n(n-1),
\quad\text{and}\quad
\Var_{Q_n}\inparen{\tr(Y^2)}
=4\binom n2\Var_{Q_n}(Y_{12}^2)=O(n^2),
\]
where we used the moment bound \eqref{eq:RGG_ER_Ye_uniformMomentBound} with $r=4$ to control the variance of $Y_{12}^2$. Statement \eqref{eq:Y-square} then follows from Chebyshev's inequality.

For the quartic trace, we use the standard trace method expansion for Wigner
matrices (see e.g.~\citep[Section~2.1]{agz2010introduction}). We have
\begin{align*}
\tr(Y^4)
={}&2\sum_eY_e^4
+4\sum_i\sum_{\substack{j<k\\j,k\ne i}}Y_{ij}^2Y_{ik}^2
+8\sum_{\{i,j,k,\ell\}}\ \sum_{\mathcal C}
\prod_{e\in\mathcal C}Y_e,
\end{align*}
where the inner sum is over the three undirected four cycles on the given
four vertices. We refer to a generic summand $Y_{ij}^2Y_{ik}^2$ in the middle sum as a
\emph{wedge monomial}.

We first compute the expectation. Since $Y_{12}$ is standard Gaussian under $Q_n$,
\[
\E_{Q_n}Y_{12}^4=3.
\]
The wedge monomials have mean one, while the four-cycle monomials have mean
zero. Thus 
\[
\E_{Q_n}\tr(Y^4)
=3n(n-1)+2n(n-1)(n-2).
\]

It remains to control the fluctuation. Let the three terms in the expansion of
$\tr(Y^4)$ be $(A)$, $(B)$, and $(C)$, from left to right. By
Cauchy--Schwarz and the uniform moment bound in
\eqref{eq:RGG_ER_Ye_uniformMomentBound} with $r=8$, the variance of each
individual monomial and the covariance of any two monomials are $O(1)$. We
first bound the three variances. Since $(A)$ is a sum of $O(n^2)$ independent terms,
$\Var_{Q_n}(A)=O(n^2)$. For $(B)$, two wedge monomials can
have nonzero covariance only if they share an edge. There are $O(n^4)$ such
ordered pairs, so $\Var_{Q_n}(B)=O(n^4)$. For $(C)$, two
four-cycle monomials can have nonzero covariance only if their edge sets
coincide. Hence $\Var_{Q_n}(C)=O(n^4)$.

We next bound the cross covariances. A summand of $(A)$ and a wedge monomial
in $(B)$ can have nonzero covariance only if they share an edge. There are
$O(n^3)$ such pairs, so
$\Cov_{Q_n}(A,B)=O(n^3)$. On the other hand,
$\Cov_{Q_n}(A,C)=0$ and
$\Cov_{Q_n}(B,C)=0$, because each corresponding product
contains an edge appearing exactly once. Therefore,
\[
\Var_{Q_n}\inparen{\tr(Y^4)}=O(n^4).
\]
Consequently, Chebyshev's inequality gives
$\tr(Y^4)-\E_{Q_n}\tr(Y^4)
=o_{Q_n}(n^3)$, proving
\eqref{eq:Y-four-asymptotic}.
\end{proof}

\subsection{Limit of log-likelihood ratios}

\begin{proof}[Proof of Theorem \ref{thm:continuous-lan}]
Under $Q_n$, the entries $(Y_{ij})_{i<j}$ are independent standard Gaussians and $Y_{ii}=0$. By \citet[Corollary~4.4.8]{vershynin2018high}, we have $Q_n(\mathcal E_n^c)\leq4e^{-n}\to0$ for sufficiently large fixed $C_0$. On $\cE_n$, Lemma~\ref{lem:RGG_ER_parentLLR_quadCubicQuarticFOrm} gives \eqref{eq:fourth-order-likelihood-new} for sufficiently large $n$.

We first show that the cubic term in \eqref{eq:fourth-order-likelihood-new} is indeed the asymptotically dominant statistic that appears in Theorem \ref{thm:continuous-lan}.  
First,
\[
S_n(Y)=O_{Q_n}(1).
\]
To see this, find by direct calculation that
\begin{align*}
\E_{Q_n}S_n(Y)&=0,\\
\Var_{Q_n}\inparen{S_n(Y)}
&=\EE[Y_{12}^2 Y_{13}^2 Y_{23}^2]=1.
\end{align*}
The claim then follows from
Chebyshev's inequality.

Note that
\(\tr(Y^3)=6\sum_{i<j<k}Y_{ij}Y_{ik}Y_{jk} = 6\sqrt{\binom n3}\,S_n(Y)\). Since $\binom n3/d\to1/(6\lambda)$, the cubic term in
\eqref{eq:fourth-order-likelihood-new} becomes 
\begin{equation*}
\frac{d - n - 1}{6 d^{3/2}} \tr(Y^3)
=\inparen{1 - \frac{n+1}{d}}\sqrt{\frac{\binom n3}{d}}\,S_n(Y)
=\frac{S_n(Y)}{\sqrt{6\lambda}}+o_{Q_n}(1).
\end{equation*}

We now apply Lemma \ref{lem:parent-moment-estimates} to determine the combined contribution of the remaining terms in \eqref{eq:fourth-order-likelihood-new}. By \eqref{eq:Y-square},
\[
\frac{n+1}{4d}\tr(Y^2)
=\frac{n(n-1)(n+1)}{4d}
+o_{Q_n}(1).
\]
Similarly,
\eqref{eq:Y-four-asymptotic} shows that
\[
-\frac{d-n-1}{8d^2}\tr(Y^4) = -\frac{n(n-1)(n-2)}{4d}
+o_{Q_n}(1).
\]
The difference between the two deterministic terms is $3n(n-1)/(4d)=o(1)$. Hence the quadratic and quartic terms cancel up to a vanishing remainder. Finally,
combining with Lemma~\ref{lem:normalizer}, we obtain a remainder $\epsilon_n=o_{Q_n}(1)$ such that, on $\cE_n$ for all sufficiently large $n$,
\[
\log\frac{\dd P_{n,d}}{\dd Q_n}(Y) =\frac{S_n(Y)}{\sqrt{6\lambda}}-\frac{1}{12\lambda}
+\epsilon_n.
\]
Since $Q_n(\cE_n^c)\to0$, this implies the unconditional expansion \eqref{eq:continuous-lan} and finishes the proof.
\end{proof}

\begin{lemma}\label{lem:cubic-statistic-clt}
Under $Q_n$,
\begin{equation*}
S_n(Y)\convd\cN(0,1).
\end{equation*}
\end{lemma}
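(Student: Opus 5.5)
We need to prove that $S_n(Y)\convd\cN(0,1)$ under $Q_n$, where $S_n(Y)$ is a normalized sum of triangle products of i.i.d.\ standard Gaussians. The statistic lies in the third Wiener chaos: it is a homogeneous multilinear polynomial of degree three in independent Gaussians with no repeated variables. The plan is to apply the fourth moment theorem of Nualart--Peccati. One could alternatively invoke Janson's theorem for degenerate U-statistics, exactly as in Proposition~\ref{prop:signed-triangle-clt}. Since the variance was already computed to be $1$ in the proof of Theorem~\ref{thm:continuous-lan}, it suffices to show $\E_{Q_n}S_n(Y)^4\to3$.

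To compute the fourth moment, expand
\[
\E S_n^4=\binom n3^{-2}\sum_{T_1,\dots,T_4}\E\prod_{a=1}^4\prod_{e\in T_a}Y_e,
\]
where the sum runs over quadruples of triangles. A term is nonzero only if every edge appears an even number of times in the multiset union of the edge sets.

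The contributions split into two classes. The first class consists of configurations in which the four triangles pair up into two coincident pairs, $T_1=T_2\neq T_3=T_4$ and its permutations. There are three such pairings, each contributing $\binom n3^2(1+o(1))$. Pairs of distinct triangles sharing an edge are not coincident, so they contribute no expectation of squares beyond this; the diagonal case of all four triangles equal contributes $O(n^3)\cdot\E Y^{12}$, which is negligible.

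The second class consists of connected configurations in which every edge is covered at least twice but the triangles do not split into two equal pairs. Here the union of the four triangles is a connected graph, or a single shared-edge structure. Counting vertices gives the bound: each edge must be covered twice, so the total number of distinct edges is at most $6$. A connected graph with at most $6$ edges in which every edge lies in at least two of the triangles spans at most $5$ vertices. For example, two pairs of triangles sharing an edge, glued so that each edge is doubled, use $4$ vertices. Such configurations therefore number $O(n^5)=o(n^6)$, and since all Gaussian moments are bounded, their total contribution is $o(1)$ after dividing by $\binom n3^2\asymp n^6$.

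It follows that $\E S_n^4=3+o(1)$, and the fourth moment theorem yields the CLT. The only real obstacle is this combinatorial vertex count for the non-paired configurations; it is routine, and a crude argument suffices. Because every edge is doubled, each triangle must share an edge with another, which forces at least $n^{-1}$ savings relative to $n^{6}$. Alternatively, one can cite the de Jong criterion for degenerate U-statistics: the maximal influence of any vertex is $O(1/n)\to0$ and the fourth moment tends to $3$, giving the same conclusion.
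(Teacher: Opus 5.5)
Your proof is correct, but it takes a different route from the paper's.

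\textbf{The paper's route.} The proof is a one-line citation. The CLT for traces of powers of Wigner matrices \citep[Theorem~2.1.31]{agz2010introduction} gives $\tr(Y^3)/n^{3/2}\convd\cN(0,6)$. The identity $\tr(Y^3)=6\sqrt{\binom n3}\,S_n(Y)$ then rescales this to the claim.

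\textbf{Your route.} You observe that $S_n(Y)$ is a multilinear cubic form in independent Gaussians, so it lies in the third Wiener chaos. The Nualart--Peccati fourth moment theorem then reduces everything to $\E_{Q_n}S_n(Y)^4\to3$, which you verify by an explicit graph count.

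\textbf{Comparison.} The paper hands all the combinatorics to random-matrix machinery that does not need Gaussian entries. Your argument is nearly self-contained but uses Gaussianity essentially. Without it you would need de Jong's criterion, as you note, or the full method of moments. In fact, the paper's own Lemma~\ref{lem:triangle-vector-clt} with $m=1$ and $Z_{ij}=Y_{ij}$ already proves the statement. It computes all moments using the same vertex count you use at $k=4$, so no external theorem is needed.

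\textbf{Tightening the combinatorial step.} You state this step loosely (``spans at most $5$ vertices'', ``$n^{-1}$ savings''). Here is a clean justification.
\begin{itemize}
\item In the union $H$ of $T_1,\dots,T_4$, every edge has multiplicity at least $2$ and every vertex has degree at least $2$. Hence $2v\le 2e\le 12$.
\item If $v=6$, then $H$ is $2$-regular and every edge is covered exactly twice. This means $H$ is two vertex-disjoint triangles, each taken twice, which is the paired case.
\item So every non-paired configuration has $v\le5$, and these contribute $O(n^5)/\binom n3^2=o(1)$.
\end{itemize}

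\textbf{Minor slip.} Each all-equal configuration contributes $(\E Y_{12}^4)^3=27$, not $\E Y^{12}$. This does not affect the conclusion.
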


\begin{proof}
The central limit theorem for the Wigner moments \citep[Theorem~2.1.31 and (2.1.44)]{agz2010introduction} applied to $n^{-1/2}Y$ with power $3$ gives
\[
\frac{\tr(Y^3)}{n^{3/2}}\convd\cN(0,6).
\]
Since $\tr(Y^3)=6\sqrt{\binom n3}\,S_n(Y)$ and $6\sqrt{\binom n3}/n^{3/2}\to\sqrt6$, the conclusion follows.
\end{proof}

\begin{proof}[Proof of Proposition~\ref{prop:parent-asymptotics}]
Set \(I=1/(6\lambda)\) and
\(L_{n,d}=\dd P_{n,d}/\dd Q_n\).
Theorem~\ref{thm:continuous-lan} and Lemma~\ref{lem:cubic-statistic-clt}
give \(\log L_{n,d}\convd\cN(-I/2,I)\) under \(Q_n\).
As in the proof of Theorem~\ref{thm:RGG_ER_LLR_asymptotics}, Le Cam's third lemma
(Lemma~\ref{lem:le-cam-third}) gives \(\log L_{n,d}\convd\cN(I/2,I)\) under
\(P_{n,d}\), proving \eqref{eq:parent-log-limits}.
The TV limit \eqref{eq:parent-tv} follows from the Neyman--Pearson lemma by
thresholding \(\log L_{n,d}\) at zero and using these two Gaussian limits.
\end{proof}

\section{Additional proofs for random graphs}\label{app:rgg}

\subsection{Distance between random geometric graphs}

Recall that the auxiliary distribution $\widetilde P_{n,d}^{\mathsf{G}}$ defined in Section~\ref{sec:assembly} uses the Gaussian threshold $z_p$ and hence does not have edge density exactly $p$. We now show that it is sufficiently close to the distribution $P_{n,d}^{\mathsf{G}}$ with edge density $p$.

For $x_1,x_2 \overset{\text{iid}}{\sim} \Unif(\SSS^{d-1})$, the density function of $\sqrt d\langle x_1,x_2\rangle$ is (see e.g.~Theorem 1.5.7(i) of \citet{muirhead1982aspects})
\begin{equation*}
h_d(u)=\frac{c_d}{\sqrt d}
\left(1-\frac{u^2}{d}\right)^{(d-3)/2}
\1\{|u|<\sqrt d\},\qquad\text{where}\qquad
c_d=\frac{\Gamma(d/2)}{\sqrt\pi\,\Gamma((d-1)/2)}.
\end{equation*}

\begin{lemma}\label{lem:rgg-threshold-correction}
For fixed \(p\in(0,1)\),
\begin{equation*}
\TV(\widetilde P_{n,d}^{\mathsf{G}},P_{n,d}^{\mathsf{G}})=O(n^2/d),
\end{equation*}
where the implicit constant may depend on $p$.
\end{lemma}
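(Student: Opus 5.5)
Both $\widetilde P_{n,d}^{\mathsf{G}}$ and $P_{n,d}^{\mathsf{G}}$ are images of the same $Y\sim P_{n,d}$, thresholded at $z_p$ and at $\tau_{d,p}$ respectively. We therefore couple them through a common $Y$ and bound the total variation distance by the probability that the two graphs differ:
\[
\TV(\widetilde P_{n,d}^{\mathsf{G}},P_{n,d}^{\mathsf{G}})\le \Pp\big(\exists\, i<j:\ Y_{ij}\text{ lies between }z_p\text{ and }\tau_{d,p}\big)\le \binom n2\,\Big|\int_{z_p}^{\tau_{d,p}} h_d(u)\dd u\Big|.
\]
The second inequality is a union bound, using that each $Y_{ij}$ has density $h_d$. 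Since $\int_{\tau_{d,p}}^\infty h_d=p=\int_{z_p}^\infty\phi$, the last integral equals $\big|\int_{z_p}^\infty (h_d-\phi)(u)\dd u\big|$. It therefore suffices to show that
\[
\sup_{t\in\R}\Big|\int_t^\infty h_d-\int_t^\infty\phi\Big|=O(1/d),
\]
or at least to show this for $t=z_p$, which is fixed.

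For the key estimate, fix $t=z_p$ and expand $h_d$ around $\phi$. We have $\log c_d=-\tfrac12\log(2\pi)+\tfrac12\log(d/2)+O(1/d)$ by standard Gamma ratio asymptotics, giving $c_d/\sqrt d=(2\pi)^{-1/2}(1+O(1/d))$. On the region $|u|\le d^{1/4}$ we have
\[
\tfrac{d-3}{2}\log(1-u^2/d)=-\tfrac{u^2}{2}+\tfrac{3u^2-u^4/2}{2d}+O\big((1+u^6)/d^2\big),
\]
so that $h_d(u)=\phi(u)\big(1+q(u)/d+O((1+u^8)/d^2)\big)$ for a polynomial $q$. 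Integrating then yields $\int_t^\infty h_d-\int_t^\infty\phi=O(1/d)$, with the constant depending on $t=z_p$, hence on $p$. The tail region $|u|>d^{1/4}$ contributes $e^{-c\sqrt d}$ under both densities, because $(1-u^2/d)^{(d-3)/2}\le e^{-(d-3)u^2/(2d)}$.

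Combining, the TV bound is $\binom n2\cdot O(1/d)=O(n^2/d)$. The only real work lies in the uniformity of the density expansion and the tail control. Alternatively, one can use the crude bound that the TV distance between $h_d$ and $\cN(0,1)$ is $O(1/d)$, which is classical (e.g., Diaconis--Freedman for one coordinate of a uniform point on the sphere), though this needs care because the statement concerns $\sqrt d\langle x_1,x_2\rangle$. Conditioning on $x_1$, that quantity is exactly $\sqrt d$ times one coordinate of a uniform point, so the Diaconis--Freedman type bound $O(1/d)$ applies directly and gives $|\int_{z_p}^\infty(h_d-\phi)|=O(1/d)$. I would cite this as the main step and fall back on the explicit expansion above if a self-contained argument is needed.
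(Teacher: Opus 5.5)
Your proof is correct, and its architecture matches the paper's. Both couple the two graphs through a common $Y$, union-bound over the $\binom n2$ edges, and use $H_d(\tau_{d,p})=1-p=\Phi(z_p)$ to reduce everything to the one-dimensional estimate $|H_d(z_p)-\Phi(z_p)|=O(1/d)$.

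The only real difference is how you prove that estimate. You compare upper tails, $\int_{z_p}^\infty(h_d-\phi)\dd u$. This forces you to expand $h_d/\phi$ on a growing window $|u|\le d^{1/4}$ and to bound the tails $|u|>d^{1/4}$ separately, or else to invoke Diaconis--Freedman.

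The paper avoids tails entirely with a symmetry trick. Both densities are symmetric, so $H_d(0)=\Phi(0)=\tfrac12$, and hence $H_d(z_p)-\Phi(z_p)=\int_0^{z_p}(h_d-\phi)\dd u$. It then only needs $h_d=\phi\,(1+O(d^{-1}))$ uniformly on the fixed compact interval $[-|z_p|,|z_p|]$, which follows immediately from Stirling and a Taylor expansion.

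Your route costs more work but proves more. It bounds $\int_\R|h_d-\phi|\dd u=O(1/d)$, so the per-edge discrepancy bound holds uniformly in the threshold. The resulting constant is therefore independent of $p$, even though you stated it as depending on $p$. The paper's constant instead grows with $|z_p|$. For fixed $p$, the paper's trick is the more economical choice.

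One small slip: $\log c_d=\tfrac12\log(d/2)-\tfrac12\log\pi+O(1/d)$. What you wrote, $-\tfrac12\log(2\pi)+\tfrac12\log(d/2)$, is off by $\tfrac12\log 2$. Your stated consequence $c_d/\sqrt d=(2\pi)^{-1/2}(1+O(1/d))$ is nonetheless correct.
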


\begin{proof}
Since $p$ is fixed, Stirling's formula and the Taylor expansion
give, uniformly for $|u|\le |z_p|$ and all sufficiently large $d$,
\[
\frac{c_d}{\sqrt d}=\phi(0)\left(1+O(d^{-1})\right),
\qquad
\frac{d-3}{2}\log\left(1-\frac{u^2}{d}\right)
=-\frac{u^2}{2}+O(d^{-1}),
\]
so the density formula implies $h_d(u)=\phi(u)(1+O(d^{-1}))$ uniformly
on $[-|z_p|,|z_p|]$. In particular, $\sup_{|u|\le |z_p|}|h_d(u)-\phi(u)|=O(d^{-1})$.

By symmetry, $H_d(0)=\Phi(0)=1/2$ where $H_d$ is the CDF of $h_d$ and $\Phi$ is the standard Gaussian CDF. As a result,
\[
|H_d(z_p)-\Phi(z_p)|
=\left|\int_0^{z_p}(h_d(u)-\phi(u))\dd u\right|
\le |z_p|\sup_{|u|\le |z_p|}|h_d(u)-\phi(u)|
=O(d^{-1}).
\]

Now couple $A\sim P_{n,d}^{\mathsf{G}}$ and
$\widetilde A\sim\widetilde P_{n,d}^{\mathsf{G}}$ using the same latent points. An edge differs exactly
when $Y_{ij}=\sqrt d\langle x_i,x_j\rangle$ lies between the thresholds $\tau_{d,p}$ and $z_p$.
Since $H_d(\tau_{d,p})=1-p=\Phi(z_p)$, 
\[
\Pp(A_{ij}\ne\widetilde A_{ij})
=|H_d(\tau_{d,p})-H_d(z_p)|
=|\Phi(z_p)-H_d(z_p)|=O(d^{-1}).
\]
Consequently, 
\begin{align*}
\TV(\widetilde P_{n,d}^{\mathsf{G}},P_{n,d}^{\mathsf{G}})
&=\sup_B\left|\E\left[\1\{A\in B\}-\1\{\widetilde A\in B\}\right]\right|\\
&\le\sup_B\E\left|\1\{A\in B\}-\1\{\widetilde A\in B\}\right|\\
&\le \Pp(A\ne\widetilde A)
=O(n^2/d).
\end{align*}
\end{proof}

\subsection{The conditional limit}
\label{sec:residual}

Before proving Proposition~\ref{prop:quenched-residual}, we first show a joint central limit theorem for cubic statistics.

\begin{lemma}\label{lem:triangle-vector-clt}
Fix an integer \(m\ge1\). For each \(n\), let
\(Z_{ij}^{(n)}=(Z_{ij,1}^{(n)},\ldots,Z_{ij,m}^{(n)})\in\R^m\),
\(1\le i<j\le n\), be i.i.d.\ centered
random vectors. Assume that
\(\sup_{n\ge2}\E\|Z_{12}^{(n)}\|^k<\infty\) for every fixed integer \(k\ge1\),
and that for $r,s\in[m]$, as $n \to \infty$, 
\[
\E[Z_{12,r}^{(n)}Z_{12,s}^{(n)}]\longrightarrow\Sigma_{rs}.
\]
For each coordinate $r\in[m]$, define
\[
W_r^{(n)}=\frac1{\sqrt{\binom{n}{3}}}
\sum_{i<j<k}Z_{ij,r}^{(n)}Z_{ik,r}^{(n)}Z_{jk,r}^{(n)}.
\]
Then \(W^{(n)}:=(W_1^{(n)},\ldots,W_m^{(n)})\) converges in distribution to a centered
Gaussian vector with covariance matrix
\(\big(\Sigma_{rs}^3\big)_{r,s\in[m]}\).
\end{lemma}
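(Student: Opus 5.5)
By the Cram\'er--Wold device, it suffices to show that for every fixed $a\in\R^m$ the scalar $T^{(n)}:=\sum_r a_r W_r^{(n)}$ converges to $\cN(0,\sum_{r,s}a_ra_s\Sigma_{rs}^3)$. Write $T^{(n)}=\binom n3^{-1/2}\sum_{i<j<k}f(Z_{ij},Z_{ik},Z_{jk})$, where $f(u,v,w)=\sum_r a_r u_rv_rw_r$. This is a degenerate (completely canceling) multilinear form in the independent edge variables $(Z_{e})_{e}$. Indeed, every triangle monomial has mean zero, and its conditional expectation given any two of its three edges vanishes because the vectors are centered. So $T^{(n)}$ is a sum of martingale-difference-type terms that form a homogeneous chaos of degree $3$ in the edges.

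The first step is the variance. Two triangle monomials are correlated only if they have the same edge set, since otherwise some edge appears exactly once and centering kills the expectation. Hence
\[
\E[W_r W_s]=\binom n3^{-1}\sum_{i<j<k}\E[Z_{ij,r}Z_{ij,s}]^3=\E[Z_{12,r}Z_{12,s}]^3\to\Sigma_{rs}^3,
\]
which gives $\Var T^{(n)}\to a^\top(\Sigma^{\circ3})a=:\sigma^2$. If $\sigma^2=0$ we are done by Chebyshev's inequality, so assume $\sigma^2>0$.

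The second step is asymptotic normality. I would use the fourth moment theorem for degenerate U-statistics / homogeneous sums (de Jong's theorem, or equivalently the framework of Theorem~1 of \citet{janson1994orthogonal} already cited for Proposition~\ref{prop:signed-triangle-clt}). De Jong's criterion for a degenerate sum $\sum_I X_I$ indexed by triples of a growing ground set of independent variables has two conditions. The first is that the maximal influence $\max_e \sum_{I\ni e}\Var X_I/\sigma_n^2$ tends to $0$. The second is that the normalized fourth moment tends to $3$. The influence condition is immediate: each edge lies in $n-2$ triangles, so its influence is $O(n/n^3)=O(n^{-2})$.

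The main obstacle is the fourth-moment computation. I would expand $\E[(T^{(n)})^4]$ over quadruples of triangles $(\Delta_1,\dots,\Delta_4)$. A quadruple contributes only if every edge of the union is covered at least twice. I would classify the edge-covering patterns:
\begin{itemize}
\item Pairings into two equal pairs give $3\sigma_n^4(1+o(1))$.
\item All other admissible configurations require the union of the four triangles to span few vertices. A pattern with no exact pairing must have every edge covered at least twice while the four triangles are not pairwise equal, which forces the union to have at most $6$ distinct vertices (for example, two triangles sharing edges with two others). Counting gives $O(n^{6-1})$ or fewer such quadruples, against a normalization of $n^{6}$, so they contribute $o(1)$.
\end{itemize}
Each individual expectation is uniformly bounded by H\"older's inequality and the moment hypothesis $\sup_n\E\|Z_{12}^{(n)}\|^k<\infty$, with $k=12$ suffices. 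The counting argument is the part I expect to be most delicate. I would carry it out by noting that a non-paired covering of the union's edge set (with at most $12$ edge slots) forces at most $6$ edges, hence vertex count at most $5$, when the triangles are not paired up.

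Therefore $\E[(T^{(n)})^4]\to3\sigma^4$, de Jong's theorem yields $T^{(n)}\convd\cN(0,\sigma^2)$, and Cram\'er--Wold concludes the proof. As an alternative, one could prove Gaussianity via the method of moments directly, since all moments are controlled by the same pairing combinatorics. This is the standard Wigner-type argument used for Lemma~\ref{lem:cubic-statistic-clt}.
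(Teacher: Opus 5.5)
Your proposal is correct, but it takes a different route from the paper. The paper does not reduce to a scalar and does not use a fourth-moment criterion. It applies the method of moments directly to all mixed moments $\E\prod_{\ell=1}^k W^{(n)}_{r_\ell}$ of the vector, using one combinatorial inequality valid for every $k$. If every edge of the union $H$ of $T_1,\dots,T_k$ is covered at least twice, then every vertex of $H$ has degree at least $2$, so $2v\le 2e\le 3k$. Lists with $v<3k/2$ contribute $o(1)$ by counting. Equality forces $H$ to be a vertex-disjoint union of triangles, each used exactly twice, and this produces Wick's formula with covariances $\Sigma_{rs}^3$.

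Your Cram\'er--Wold plus de Jong argument needs this combinatorics only for $k=4$. It therefore only requires $\sup_n\E\|Z^{(n)}_{12}\|^{12}<\infty$ rather than all moments. The cost is importing de Jong's CLT for clean multilinear forms (edges as independent ground variables, triangles as index sets), plus a separate variance computation and the degenerate case $\sigma^2=0$. The paper's proof is self-contained and handles the vector case directly.

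The one step you should state more carefully is the count of non-paired quadruples. As written, ``at most $6$ vertices'' gives only $O(n^6)$ terms, which is not negligible against the normalization. The correct bound comes from the same degree argument the paper uses:
\begin{itemize}
\item Twelve edge slots, each edge covered at least twice, give $e\le 6$.
\item Minimum degree $2$ gives $v\le e\le 6$.
\item If $v=6$, then $e=6$ and every degree equals $2$, so $H$ is $2$-regular.
\item Every component of $H$ contains some $T_\ell$, so $H$ is two vertex-disjoint triangles, each covered exactly twice. That is a pairing.
\end{itemize}
Hence non-paired quadruples have $v\le 5$ and contribute $O(n^5/n^6)$. Paired quadruples whose two triangles share an edge or coincide have $v\le 4$ and are also negligible. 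This justifies your claim that the pairings give $3\sigma_n^4(1+o(1))$.
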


\begin{proof}
We prove convergence of moments. 
Fix \(k\ge1\) and \(r_1,\ldots,r_k\in[m]\). Expansion gives
\begin{equation}
\E\prod_{\ell=1}^kW_{r_\ell}^{(n)}
=\binom n3^{-k/2}
\sum_{T_1,\ldots,T_k\in\binom{[n]}3}M_n(\mathbf T), \label{eq:moment-k-expansion}
\end{equation}
where \(\mathbf T=(T_1,\ldots,T_k)\) is a list of triangles and
$$
M_n(\mathbf T)
:=\E\left[\prod_{\ell=1}^k
\prod_{\{i,j\}\in\binom{T_\ell}2}Z_{ij,r_\ell}^{(n)}\right].
$$

Let \(H\) be the graph union of $T_1,\ldots,T_k$, with
\(v=|V(H)|\) and \(e=|E(H)|\). Independence across edges gives
\[
M_n(\mathbf T)
=\prod_{\{i,j\}\in E(H)}
\E\left[\prod_{\ell:\,\{i,j\}\subset T_\ell}
Z_{ij,r_\ell}^{(n)}\right].
\]
Since each coordinate is centered,
\[
\#\{\ell:\{i,j\}\subset T_\ell\}=1
\quad\Longrightarrow\quad M_n(\mathbf T)=0.
\]
Thus a nonzero summand has every edge occurring at least twice.
Every vertex in $H$ also has degree at least two, so
\begin{equation}
2v\le\sum_{u\in V(H)}\deg_H(u)=2e\le3k. \label{eq:vertex-number-bound}
\end{equation}

H\"older's inequality bounds each summand in
\eqref{eq:moment-k-expansion} uniformly in \(n\) and \(\mathbf T\):
\begin{align*}
|M_n(\mathbf T)|
&\le\E\prod_{\ell=1}^k
\prod_{\{i,j\}\in\binom{T_\ell}2}|Z_{ij,r_\ell}^{(n)}|\\
&\le\prod_{\ell=1}^k\prod_{\{i,j\}\in\binom{T_\ell}2}
\left(\E|Z_{ij,r_\ell}^{(n)}|^{3k}\right)^{1/(3k)}\\
&\le\prod_{\ell=1}^k\prod_{\{i,j\}\in\binom{T_\ell}2}
\left(\E\|Z_{ij}^{(n)}\|_2^{3k}\right)^{1/(3k)}\\
&=\E\left[\|Z_{12}^{(n)}\|_2^{3k}\right]
\le\sup_{n\ge2}\E\left[\|Z_{12}^{(n)}\|_2^{3k}\right]<\infty.
\end{align*}
Next, the number of triangle lists whose union has $v$ vertices satisfies
\[
\#\{\mathbf T:|V(H)|=v\}
\le\binom nv\binom v3^k=O(n^v).
\]
Combining the above bounds with \(\binom n3^{k/2}\asymp n^{3k/2}\) gives
\begin{equation}
\binom n3^{-k/2}
\sum_{\mathbf T:\,|V(H)|<3k/2}|M_n(\mathbf T)|
= O \bigg(\sum_{3\le v<3k/2}n^{v-3k/2}\bigg) = o(1) . \label{eq:MnT-sum-vanishing-part}
\end{equation}

It follows from \eqref{eq:moment-k-expansion}, \eqref{eq:vertex-number-bound}, and \eqref{eq:MnT-sum-vanishing-part} that 
\[
\E\prod_{\ell=1}^kW_{r_\ell}^{(n)}
=\binom n3^{-k/2}
\sum_{\substack{T_1,\ldots,T_k\in\binom{[n]}3\\|V(H)|=3k/2}}
M_n(\mathbf T)+o(1).
\]
If \(k\) is odd, the above sum is empty and the mixed moment tends to zero. 

Now suppose \(k\) is even.
By \eqref{eq:vertex-number-bound}, every nonzero summand above
satisfies $2v=2e=3k$. Consequently, $H$ is a disjoint union
of triangles, each appearing exactly twice in $\mathbf T$.
Let \(\mathcal P_2(k)\) denote the pairings
of \([k]\). Fix a pairing \(\pi\in\mathcal P_2(k)\) of triangles in $\mathbf T$.  
Counting the set of paired triangles gives
\[
\#\{\mathbf T\text{ corresponding to }\pi\}
=\prod_{q=0}^{k/2-1}\binom{n-3q}{3}
=\frac{(n)_{3k/2}}{6^{k/2}} \sim \binom n3^{k/2} ,
\]
where \((n)_j=n(n-1)\cdots(n-j+1)\). 
For the triangle pairing $\pi$, independence across edges gives
\[
M_n(\mathbf T)
=\prod_{\{\ell,\ell'\}\in\pi}
\left(\E[Z_{12,r_\ell}^{(n)}Z_{12,r_{\ell'}}^{(n)}]\right)^3.
\]
It then follows from the assumption on the covariance that
\[
M_n(\mathbf T)\longrightarrow
\prod_{\{\ell,\ell'\}\in\pi}\Sigma_{r_\ell r_{\ell'}}^3.
\]
Combining the results, we conclude that
\begin{equation}
\E\prod_{\ell=1}^kW_{r_\ell}^{(n)}
\longrightarrow
\sum_{\pi\in\mathcal P_2(k)}
\prod_{\{\ell,\ell'\}\in\pi}\Sigma_{r_\ell r_{\ell'}}^3. \label{eq:W-limiting-moments}
\end{equation}
Note that the above limit also holds for odd \(k\) as \(\mathcal P_2(k)=\varnothing\).

To identify the limiting distribution from the asymptotic moments in \eqref{eq:W-limiting-moments}, we first take \(k=2\) to obtain 
\[
\operatorname{Cov}(W_r^{(n)},W_s^{(n)})
=\left(\E[Z_{12,r}^{(n)}Z_{12,s}^{(n)}]\right)^3.
\]
Thus \((\Sigma_{rs}^3)_{r,s\in[m]}\) is a limit of covariance matrices and
is positive semidefinite. Let
\[
G\sim\cN(0,(\Sigma_{rs}^3)_{r,s\in[m]}).
\]
Since \(\E[G_rG_s]=\Sigma_{rs}^3\), Wick's formula gives
\[
\E\prod_{\ell=1}^kG_{r_\ell}
=\sum_{\pi\in\mathcal P_2(k)}
\prod_{\{\ell,\ell'\}\in\pi}\Sigma_{r_\ell r_{\ell'}}^3,
\]
which is exactly the asymptotic mixed moment in \eqref{eq:W-limiting-moments}.
This proves asymptotic normality.
\end{proof}

\begin{proof}[Proof of Proposition~\ref{prop:quenched-residual}]
For any $(i,j)$, recall that $Y_{ij}\sim\cN(0,1)$ under $Q_n$, $A_{ij}=\1\{Y_{ij}\ge z_p\}$, and $\bar A_{ij}=\frac{A_{ij}-p}{\sqrt{p(1-p)}}$. 
It is straightforward to obtain $\E_{Q_n}[Y_{ij}\mid A_{ij}]=\rho_p\bar A_{ij}$ where $\rho_p$ is given in \eqref{eq:rho-p-correlation}.
As a result,
\begin{equation*}
\E_{Q_n}\insquare{S_n(Y)\mid A}
=\frac{\rho_p^3}{\sqrt{\binom n3}}
\sum_{i<j<k}\bar A_{ij}\bar A_{ik}\bar A_{jk}
=\rho_p^3 S_n^{\mathsf{G}}(A).
\end{equation*}

Draw $A\sim Q_n^{\mathsf{G}}$ and, conditionally on $A$, draw
$Y^{(1)}$ and $Y^{(2)}$ independently with common distribution
$\cL_{Q_n}(Y\mid A)$. Define the two residuals by
\begin{equation}\label{eq:rgg-replica-residuals}
R_n^{(\ell)}
=S_n(Y^{(\ell)})-\rho_p^3 S_n^{\mathsf{G}}(A),
\qquad \ell=1,2.
\end{equation}
By Lemma~\ref{lem:two-replica-criterion},
it suffices to show that they converge jointly to independent
$\cN(0,1-\rho_p^6)$ variables.

Note that
\[
(\bar A_{ij},Y_{ij}^{(1)},Y_{ij}^{(2)}),\qquad 1\le i<j\le n,
\]
are i.i.d.\ across edges, and each entry is standardized marginally. 
Moreover,
\begin{equation*}
\E[\bar A_{ij}Y_{ij}^{(\ell)}]=\rho_p,\qquad
\E[Y_{ij}^{(1)}Y_{ij}^{(2)}]
=\E\left[\E[Y_{ij}^{(1)}\mid A]\E[Y_{ij}^{(2)}\mid A]\right]
=\rho_p^2 .
\end{equation*}
Thus the covariance matrix of $(\bar A_{ij},Y_{ij}^{(1)},Y_{ij}^{(2)})$ is
\begin{equation*}
\Sigma=
\begin{pmatrix}
1&\rho_p&\rho_p\\
\rho_p&1&\rho_p^2\\
\rho_p&\rho_p^2&1
\end{pmatrix}.
\end{equation*}
Applying Lemma~\ref{lem:triangle-vector-clt} with $m=3$ gives
\begin{equation*}
\left(S_n^{\mathsf{G}}(A),
S_n(Y^{(1)}),
S_n(Y^{(2)})\right)
\convd
\cN\!\left(0,
\begin{pmatrix}
1&\rho_p^3&\rho_p^3\\
\rho_p^3&1&\rho_p^6\\
\rho_p^3&\rho_p^6&1
\end{pmatrix}\right).
\end{equation*}
This, together with 
\eqref{eq:rgg-replica-residuals}, gives that 
\[
(R_n^{(1)},R_n^{(2)})
\convd
\cN(0,1-\rho_p^6)\otimes\cN(0,1-\rho_p^6).
\]
Applying Lemma~\ref{lem:two-replica-criterion} with $\cG_n=\sigma(A)$ then completes the proof.
\end{proof}

\section{Additional proofs for rank statistics}\label{app:ranks}

\subsection{Approximating the cubic statistic with ranks}
\label{subsec:rank-reconstruction}

This section proves Lemma~\ref{lem:rank-statistic-reconstruction}.
Throughout, all expectations \(\E\) are taken under \(Q_n\).
Let \(m=\binom n2\), and label the \(m\) edge
coordinates by \([m]\). For \(r\in[m]\), define
\[
G_r=g_{\Pi(r)},
\]
which is the Gaussian quantile corresponding to \(Y_r\). For any
\(I\in\binom{[m]}{3}\), write
\[
Y_I=\prod_{r\in I}Y_r,\qquad\text{and}\qquad G_I=\prod_{r\in I}G_r.
\]
(The three edges in $I$ may not form a triangle.) 
The pairs
\((Y_r,G_r)_{r\in[m]}\) are exchangeable under all permutations of the
edge coordinates. We first prove two lemmas.

\begin{lemma}\label{lem:rank-alt-preliminary}
We have
\begin{equation}
\max_{r\in[m]}\bigl(\E|Y_r|^6+\E|G_r|^6\bigr) = O(1). \label{eq:six-moment-bound}
\end{equation}
Moreover, for every \(I\in\binom{[m]}3\),
\begin{equation}
\E(Y_I-G_I)^2\longrightarrow0.  \label{eq:cubic-L2-conv}
\end{equation}
\end{lemma}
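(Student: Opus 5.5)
The plan is to treat the two claims separately, using throughout that under $Q_n$ the coordinates $Y_1,\dots,Y_m$ are i.i.d.\ $\cN(0,1)$, so $\Pi$ is a uniformly random bijection and each $G_r=g_{\Pi(r)}$ is uniformly distributed over the $m$ quantiles $\{g_1,\dots,g_m\}$.

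\emph{The moment bound \eqref{eq:six-moment-bound}.} The Gaussian part is immediate: $\E|Y_r|^6=15$. For the quantile part,
\[
\E|G_r|^6=\frac1m\sum_{s=1}^m\abs{\Phi^{-1}\!\inparen{\tfrac{s}{m+1}}}^6 .
\]
Since $|\Phi^{-1}|^6$ is monotone on each half-line, I will compare this Riemann-type sum with the integral $\int_0^1|\Phi^{-1}(u)|^6\dd u=\E|Z|^6=15$. Each term $|g_s|^6$ with $s\le m/2$ is bounded by $(m+1)\int_{(s-1)/(m+1)}^{s/(m+1)}|\Phi^{-1}|^6$, and symmetrically for $s>m/2$. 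Summing these gives $\E|G_r|^6\le \frac{m+1}{m}\cdot 15=O(1)$.

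\emph{The $L^2$ convergence \eqref{eq:cubic-L2-conv}.} By exchangeability, it suffices to take $I=\{1,2,3\}$. The first step is the single-coordinate estimate $\E(Y_1-G_1)^2\to0$. Write $Y_1=\Phi^{-1}(U_1)$ with $U_1$ uniform on $(0,1)$, and let $U_{(s)}$ denote the uniform order statistics. Then $G_1=\Phi^{-1}(\E U_{(\Pi(1))})$, and $U_{(s)}-s/(m+1)$ has standard deviation $O(m^{-1/2})$ uniformly in $s$. Because $\Phi^{-1}$ is locally Lipschitz, this gives $Y_1-G_1\to0$ in probability. Combined with the uniform sixth-moment bound, uniform integrability upgrades this to convergence in $L^2$, and indeed in $L^p$ for every $p<6$.

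For the triple product, I telescope:
\[
Y_I-G_I=(Y_1-G_1)Y_2Y_3+G_1(Y_2-G_2)Y_3+G_1G_2(Y_3-G_3).
\]
Applying Hölder's inequality to the square of each term, with exponents $(3,3,3)$ on the three factors, yields
\[
\E\insquare{(Y_1-G_1)^2Y_2^2Y_3^2}\le \norm{Y_1-G_1}_6^2\,\norm{Y_2}_6^2\,\norm{Y_3}_6^2 .
\]
The difficulty here is that this needs $\norm{Y_1-G_1}_6\to0$, whereas uniform integrability from sixth moments alone only delivers $L^p$ convergence for $p<6$. I see two ways around it. One is to use unequal exponents, for instance $(4/3,8,8)$ applied to the squared factors, which requires only $\norm{Y_1-G_1}_{8/3}\to0$ together with uniform eighth-moment bounds. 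Those bounds follow from the same Riemann-sum argument for all fixed moments. The other is to strengthen the first claim to all moments directly. Either route gives $\E(Y_I-G_I)^2\to0$.

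The main obstacle is the pointwise convergence $G_1-Y_1\to0$, because $\Phi^{-1}$ blows up near $0$ and $1$. To handle it, I split on the event $\{|Y_1|\le K\}$. On this event, concentration of $U_{(s)}$ combined with the Lipschitz property of $\Phi^{-1}$ on compact subintervals gives closeness. Off the event, the probability is small and the contribution is controlled by the uniform moment bounds.
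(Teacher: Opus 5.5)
Your proof is correct, but it takes a more hands-on route than the paper's.

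\textbf{The paper's route.} The paper gets both claims from one classical input. It applies the approximate-scores result for rank statistics (van der Vaart's Example 13.9) to the increasing, square-integrable score $u\mapsto(\Phi^{-1}(u))^3$. This gives $\E|Y_1^3-G_1^3|^2\to0$. The elementary inequality $|a-b|^3\le 4|a^3-b^3|$ then yields $\E|Y_1-G_1|^6\to0$ directly. The sixth-moment bound on $G_r$ follows from Minkowski's inequality and exchangeability. The triple product is handled with H\"older exponents $(3,3,3)$, exactly your first attempt. Applying an $L^2$ score theorem to the cube is what lets the paper reach the boundary exponent $6$. That sidesteps the uniform-integrability issue you correctly flagged.

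\textbf{Your route.} Your argument is self-contained. The Riemann-sum comparison bounds all fixed moments of $G_r$, since monotonicity of $|\Phi^{-1}|^k$ on each half-line is all it uses. The bound $\Var(U_{(s)})\le 1/(4(m+2))$, averaged over $s$ by exchangeability, shows $U_1-\Pi(1)/(m+1)\to0$ in $L^2$. Local Lipschitzness of $\Phi^{-1}$ then gives $Y_1-G_1\to0$ in probability. Uniform integrability from the higher moments upgrades this to $L^p$ convergence for every fixed $p$. This gives a stronger conclusion with no black box, at the cost of extra length.

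\textbf{A small slip.} H\"older with exponents $(4/3,8,8)$ on the squared factors needs sixteenth moments of $Y_2$, $Y_3$, $G_1$, $G_2$, not eighth moments. You can fix this either way:
\begin{itemize}
\item use exponents $(2,4,4)$, which need only $L^4$ convergence of $Y_1-G_1$ (available from sixth-moment uniform integrability) and eighth moments; or
\item invoke your all-moments bound, which already covers the sixteenth moments.
\end{itemize}
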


\begin{proof}
The random variable \(\Phi(Y_1)\) is uniform on \((0,1)\), and \(\Pi(1)\) is its rank
among the \(m\) i.i.d.\ uniform random variables \((\Phi(Y_r))_{r\in[m]}\).
Define \(h(x)=x^3\).
The function \(h\circ\Phi^{-1}\) is increasing and square-integrable. Thus
Example~13.9 of \citet{vanderVaart1998asymptotic} gives
\[
\E\left|h(Y_1)-h(G_1)\right|^2\longrightarrow0.
\]
For all \(a,b\in\R\), $|a-b|^6\le16|h(a)-h(b)|^2$, 
so
\begin{equation}\label{eq:rank-alt-coordinate-error}
\E|Y_1-G_1|^6
\longrightarrow0.
\end{equation}
The triangle inequality and exchangeability then give \eqref{eq:six-moment-bound}.

For \eqref{eq:cubic-L2-conv}, by exchangeability, it suffices to consider \(I=\{1,2,3\}\). We have 
\begin{align*}
(Y_I-G_I)^2
&=\left( (Y_1-G_1)Y_2Y_3+G_1(Y_2-G_2)Y_3+G_1G_2(Y_3-G_3) \right)^2\\
&\le3\bigl[(Y_1-G_1)^2Y_2^2Y_3^2+G_1^2(Y_2-G_2)^2Y_3^2
+G_1^2G_2^2(Y_3-G_3)^2\bigr].
\end{align*}
Applying H\"older's inequality to the first term
and using \eqref{eq:six-moment-bound} and \eqref{eq:rank-alt-coordinate-error} gives
\[
\E\bigl[(Y_1-G_1)^2Y_2^2Y_3^2\bigr]
\le\bigl(\E|Y_1-G_1|^6\,\E|Y_2|^6\,\E|Y_3|^6\bigr)^{1/3}
\le C\bigl(\E|Y_1-G_1|^6\bigr)^{1/3}\longrightarrow0.
\]
The other two terms are bounded in the same way, proving \eqref{eq:cubic-L2-conv}.
\end{proof}

\begin{lemma}\label{lem:rank-alt-mixed-moments}
There is an absolute constant \(C\) such that, for
\(I,J\in\binom{[m]}3\),
\[
|\E[Y_IG_J]|+|\E[G_IG_J]|
\le C
\begin{cases}
m^{-1},& \text{ if } |I\cap J|=1,\\
m^{-2},& \text{ if } |I\cap J|=0.
\end{cases}
\]
\end{lemma}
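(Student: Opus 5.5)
The idea is to exploit a conditional-mean identity that holds for sampling without replacement. The quantiles $g_1,\dots,g_m$ from \eqref{eq:rank-gaussian-quantiles} are symmetric about zero, because $g_{m+1-r}=-g_r$, and therefore $\sum_{r=1}^m g_r=0$. Under $Q_n$, the order statistics $\xi$ are independent of $\Pi$, and $\Pi$ is a uniformly random bijection. Hence, conditionally on $\xi$, the pairs $(Y_r,G_r)=(\xi_{\Pi(r)},g_{\Pi(r)})$ for $r\in[m]$ are a sample without replacement from the fixed population of pairs $\{(\xi_s,g_s)\}_{s\in[m]}$.

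\textbf{Step 1: the elimination identity.} Fix a set $K\subset[m]$ of at most six coordinates and some $j\in K$. Condition on $\xi$ and on $(\Pi(k))_{k\in K\setminus\{j\}}$. Then $\Pi(j)$ is uniform over the $m-|K|+1$ unused ranks. Since the $g$'s sum to zero, this gives
\[
\E\Bigl[G_j\,\Big|\,\xi,(\Pi(k))_{k\in K\setminus\{j\}}\Bigr]=-\frac{1}{m-|K|+1}\sum_{k\in K\setminus\{j\}}G_k .
\]
Consequently, whenever $j$ occurs exactly once in a monomial $G_j\cdot F$, where $F$ is a function of the remaining pairs $(Y_k,G_k)_{k\in K\setminus\{j\}}$, we get
\[
\E[G_jF]=-\frac{1}{m-|K|+1}\sum_{k\in K\setminus\{j\}}\E[G_kF].
\]
This rewrites the expectation as $O(1/m)$ times a sum of at most five monomials on fewer coordinates. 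Each such monomial now has $G_k$ multiplying whatever factor already sat at coordinate $k$.

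\textbf{Step 2: moments.} Every monomial produced has degree at most $6$, with each factor a power of $|Y_k|$ or $|G_k|$ of degree at most $3$. I would bound all such expectations by an absolute constant using H\"older's inequality together with uniform bounds on $\E|Y_r|^{q}$ and $\E|G_r|^{q}$ for $q\le 36$. The Gaussian case is immediate. For $G_r$, the $q$-th moment equals $m^{-1}\sum_r|\Phi^{-1}(r/(m+1))|^q$. This is a Riemann sum for $\E|Z|^q$, and it is bounded uniformly in $m$ because $|\Phi^{-1}|^q$ is monotone on each half-line; this is the same reasoning that gives \eqref{eq:six-moment-bound} in Lemma~\ref{lem:rank-alt-preliminary}.

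\textbf{Step 3: counting eliminations.} I would only ever eliminate coordinates carrying a lone $G$ factor, and never use any averaging over $Y$.
\begin{itemize}
\item If $|I\cap J|=0$, the monomials $Y_IG_J$ and $G_IG_J$ involve six distinct coordinates, with the three coordinates of $J$ each carrying a single $G$. Eliminate one $j\in J$; this costs $1/m$. In each resulting term the replacing factor $G_k$ lands on some $k\neq j$. If $k\in J$, one lone-$G$ coordinate of $J$ survives. If $k\in I$, two survive. (For $G_IG_J$ one can also use the coordinates of $I$.) Eliminating one survivor costs another $1/m$, and Step 2 then gives $O(m^{-2})$.
\item If $|I\cap J|=1$, $J$ still has two coordinates outside $I$ carrying a lone $G$. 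A single elimination gives $O(m^{-1})$.
\end{itemize}

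\textbf{Main obstacle.} The delicate point is that the same identity does \emph{not} work with $Y$ in place of $G$. Conditionally on $\xi$, the sum $\sum_s\xi_s$ is a nonzero random variable of order $\sqrt m$, so eliminating a $Y$-coordinate would only yield a factor $m^{-1/2}$. The proof must therefore be organized so that every elimination acts on a $G$-singleton. This is possible because in both cases $J$ supplies enough such coordinates, and it explains why the lemma is stated for $\E[Y_IG_J]$ rather than for $\E[Y_IY_J]$. The conditioning on $\xi$ must also be kept throughout, so that the $Y_k$ remain functions of the conditioned ranks.
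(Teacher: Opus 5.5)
Your proof is correct, but it takes a genuinely different route from the paper. The paper never conditions on the order statistics. It uses the deterministic power-sum identities $p_1=p_3=0$, which follow from $g_{m+1-r}=-g_r$, to get $\sum_{J}G_J=0$ and $\sum_{J\ni 1}G_J=G_1^3-\tfrac{p_2}{2}G_1$ with $p_2=O(m)$. It multiplies these by $Y_I$ (or $G_I$) and uses exchangeability to obtain linear relations for $a_k=\E[Y_IG_J]$ with $|I\cap J|=k$, namely $\binom{m-3}{2}a_1+2(m-3)a_2+a_3=O(m)$ and $\binom{m-3}{3}a_0+3\binom{m-3}{2}a_1+3(m-3)a_2+a_3=0$. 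These are solved in turn using only $a_2,a_3=O(1)$.

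You instead use the independence of $\xi$ and $\Pi$ to get the sampling-without-replacement identity $\E[G_j\mid\xi,(\Pi(k))_{k\in K\setminus\{j\}}]=-(m-|K|+1)^{-1}\sum_{k\in K\setminus\{j\}}G_k$. This needs only $p_1=0$, and it gains one factor $m^{-1}$ each time you eliminate a coordinate carrying a single $G$ and no $Y$. Your case count is right: one such elimination suffices when $|I\cap J|=1$, and two are available when $|I\cap J|=0$. Since each elimination preserves total degree $6$, H\"older needs only sixth moments, so your $q\le 36$ is more than required. Your Riemann-sum monotonicity bound is also a valid alternative to the paper's route to \eqref{eq:six-moment-bound}.

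What the paper's approach buys is that it is purely algebraic plus exchangeability, with no conditioning on $\xi$. What yours buys is locality and generality: it gives $O(m^{-t})$ for monomials of any fixed degree admitting $t$ successive lone-$G$ eliminations. It does so without further symmetric-function identities such as $p_3=0$ or expansions of elementary symmetric polynomials.

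One small inaccuracy in your closing remark: the lemma is not stated for $\E[Y_IY_J]$ because that quantity is exactly $0$ for $I\ne J$ by independence. The $m^{-1/2}$ obstruction you describe is not the reason.
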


\begin{proof}
Define 
$$
p_k:=\sum_{r=1}^mG_r^k=\sum_{r=1}^mg_r^k.
$$ 
By Gaussian symmetry, we have \(g_{m+1-r}=-g_r\), so
$$
p_1=p_3=0 .
$$
Note that $p_2$ is deterministic, so by \eqref{eq:six-moment-bound},
$$
p_2 = \E[p_2] = \E\bigg[\sum_{r=1}^mG_r^2\bigg]
=m\E G_1^2\le m(\E|G_1|^6)^{1/3} = O(m).
$$

Next, using \(p_1=p_3=0\), we obtain
\begin{align}
\sum_{J\in\binom{[m]}3}G_J
&=\frac16\sum_{r=1}^mG_r
\bigg[\bigg(\sum_{s\ne r}G_s\bigg)^2-\sum_{s\ne r}G_s^2\bigg]\notag\\
&=\frac16\sum_{r=1}^mG_r
\bigl[(p_1-G_r)^2-(p_2-G_r^2)\bigr]\notag\\
&=\frac16\sum_{r=1}^m (2G_r^3-p_2 G_r)
=\frac{2p_3-p_1p_2}{6}=0,
\label{eq:rank-alt-score-total}
\end{align}
and
\begin{align}
\sum_{J\in\binom{[m]}3, \, 1\in J}G_J
&=\frac{G_1}{2}
\bigg[\bigg(\sum_{r=2}^mG_r\bigg)^2-\sum_{r=2}^mG_r^2\bigg]\notag\\
&=\frac{G_1}{2}\bigl[(p_1-G_1)^2-(p_2-G_1^2)\bigr]
=G_1^3-\frac{p_2}{2}G_1.
\label{eq:rank-alt-score-containing-one}
\end{align}

We now bound \(\E[Y_IG_J]\). By exchangeability, we may fix
\(I=\{1,2,3\}\) and define
\[
a_k:=\E[Y_IG_J],\qquad \text{for }\,  |I\cap J|=k\in\{0,1,2,3\}.
\]
H\"older's inequality and \eqref{eq:six-moment-bound} give
\[
|a_k|\le\prod_{r\in I}\bigl(\E|Y_r|^6\bigr)^{1/6}
\prod_{s\in J}\bigl(\E|G_s|^6\bigr)^{1/6}=O(1).
\]
Similarly, \(|\E[Y_IG_1^3]|+|\E[Y_IG_1]|=O(1)\).
Multiplying \eqref{eq:rank-alt-score-containing-one}
by \(Y_I\), taking expectations, and using \(p_2=O(m)\), we obtain
\[
\sum_{J\in\binom{[m]}3,\,1\in J}\E[Y_IG_J]
=\E[Y_IG_1^3]-\frac{p_2}{2}\E[Y_IG_1]
=O(m).
\]
Since \(I=\{1,2,3\}\), each triple containing \(1\) is of the form
\(\{1,r,s\}\), \(\{1,2,r\}\), \(\{1,3,r\}\), or \(I\),
where \(r,s \in \{4,\ldots,m\}\). Thus, counting the number of choices of $(r,s)$ in each case gives
\begin{equation*}
\sum_{J\in\binom{[m]}3,\,1\in J}\E[Y_IG_J]
=\binom{m-3}{2}a_1+2(m-3)a_2+a_3=O(m).
\end{equation*}
Since \(a_2,a_3=O(1)\), it follows that
\(a_1=O(m^{-1})\). Similarly, \eqref{eq:rank-alt-score-total} gives
\begin{equation*}
\binom{m-3}{3}a_0+3\binom{m-3}{2}a_1+3(m-3)a_2+a_3=0.
\end{equation*}
Using \(a_1=O(m^{-1})\) and \(a_2,a_3=O(1)\) then yields 
\(a_0=O(m^{-2})\). 

The same argument with \(Y_I\) replaced by
\(G_I\) gives the bound on \(\E[G_IG_J]\).
\end{proof}

\begin{proof}[Proof of Lemma~\ref{lem:rank-statistic-reconstruction}]
Let \(\mathcal T_n\subset\binom{[m]}3\) be the collection of edge sets
that form triangles, so \(|\mathcal T_n|=\binom n3\). Expanding the square gives
\begin{equation}\label{eq:rank-alt-error-expansion}
\E\bigl(S_n(Y)-S_n^{\mathsf{R}}(\Pi)\bigr)^2
=\frac1{\binom n3}\sum_{I,J\in\mathcal T_n}\E[(Y_I-G_I)(Y_J-G_J)].
\end{equation}
For \(I\ne J\), independence and centering of the \(Y_r\)'s give
\(\E[Y_IY_J]=0\). Expanding the product and applying
Lemma~\ref{lem:rank-alt-mixed-moments} to both \((I,J)\) and \((J,I)\),
we therefore obtain
\[
|\E[(Y_I-G_I)(Y_J-G_J)]|
\le C
\begin{cases}
m^{-1},&|I\cap J|=1,\\
m^{-2},&|I\cap J|=0.
\end{cases}
\]
For each fixed triangle \(I\), there are \(O(n)\) triangles sharing
exactly one edge with $I$ and \(O(n^3)\) triangles with no common edge.
The diagonal terms in \eqref{eq:rank-alt-error-expansion} are all equal
by exchangeability. Hence
\begin{equation*}
\E\bigl(S_n(Y)-S_n^{\mathsf{R}}(\Pi)\bigr)^2
=\E\bigl(Y_1Y_2Y_3-G_1G_2G_3\bigr)^2
+O\bigg(\frac{n}{m}+\frac{\binom n3}{m^2}\bigg) \longrightarrow0,
\end{equation*}
where we used \eqref{eq:cubic-L2-conv} and
\(m = \binom{n}{2}\). This finishes the proof.
\end{proof}

\section*{AI Disclosure}

The original proof of the asymptotic error for testing the random geometric graph model against the Erd\H{o}s--R\'enyi model at the critical threshold $d/n^3 \to \lambda$ was found by GPT-5.6. The authors substantially redeveloped the proofs with the assistance of GPT-5.6 and GPT-6, including connecting to the classical theory of Davies and Le Cam--Yang, simplifying various technical steps, and extending the results to rank statistics and kernel matrices. All the results in Section~\ref{sec:main-results} have been formalized in Lean using GPT-6.1. The authors are fully responsible for this work.

\section*{Acknowledgments}

We are very grateful to Will Perkins for invaluable discussions about sharp thresholds. Cheng Mao was supported in part by NSF CAREER Award 2338062.

\bibliographystyle{plainnat}
\bibliography{ref}

\end{document}